\sloppy\pagestyle{plain}%
  \newcommand{\miniscule}{\@setfontsize\miniscule{3}{7}}
    \newcommand{\stiny}{\@setfontsize\miniscule{5}{7}}
  \newcommand{\miniscule}{\@setfontsize\miniscule{3}{7}}
   \newcommand{\stiny}{\@setfontsize\miniscule{5}{7}}
  \newcommand{\miniscule}{\@setfontsize\miniscule{3}{7}}
    \newcommand{\stiny}{\@setfontsize\miniscule{5}{7}}
\theoremstyle{definition}
\theoremstyle{remark}
\theoremstyle{corollary}
\theoremstyle{theorem}
\theoremstyle{corollary}
\newtheorem{theorem}{Theorem}[section]
\newtheorem{lemma}[theorem]{Lemma}
\newtheorem{proposition}[theorem]{Proposition}
\newtheorem{conjecture}[theorem]{Conjecture}
\theoremstyle{corollary}
\newtheorem{corollary}[theorem]{Corollary}
\theoremstyle{definition}
\newtheorem{definition}[theorem]{Definition}
\theoremstyle{remark}
\newtheorem{remark}[theorem]{Remark}
\numberwithin{equation}{section}
\newcommand{\Z}{\mathbb{Z}}
\def\P{\mathbb{P}}
\def\Pic{\operatorname{Pic}}
\def\Proj{\operatorname{Proj}}
\def\Cox{\operatorname{Cox}}
\def\reg{\operatorname{reg}}
\def\pd{\operatorname{pd}}
\def\codim{\operatorname{codim}}
\def\Eff{\operatorname{Eff}}
\def\Nef{\operatorname{Nef}}
\def\index{\operatorname{index}}
\def\deg{\operatorname{deg}}
\def\rank{\operatorname{rank}}
\newcommand{\suchthat}{\;\ifnum\currentgrouptype=16 \middle\fi|\;}
\title[Hilbert functions of Cox rings of del Pezzo surfaces]{Hilbert functions of Cox rings of del Pezzo surfaces}
\thanks{The second author has been supported by the National Research Foundation in Korea (NRF-2014R1A1A2056432) and IBS-R003-D1}
\begin{document}

\author{Jinhyung Park}
\address{School of Mathematics, Korea Institute for Advanced Study, 85 Hoegiro, Dongdaemun-gu, Seoul 02455, Republic of Korea}
\email{parkjh13@kias.re.kr}

\author{Joonyeong Won}
\address{Center for Geometry and Physics, Institute for Basic Science (IBS) 77 Cheongam-ro, Nam-gu, Pohang, Gyeongbuk, 37673, Republic of Korea}
\email{leonwon@kias.re.kr}

\subjclass[2010]{14J26, 13D40, 13D02}

\date{\today}


\keywords{del Pezzo surface, Cox ring, syzygy, Hilbert function}

\begin{abstract}
To study syzygies of the Cox rings of del Pezzo surfaces, we calculate important syzygetic invariants such as the Hilbert functions, the Green-Lazarsfeld indices, the projective dimensions, and the Castelnuovo-Mumford regularities. Using these computations as well as the natural multigrading structures by the Picard groups of del Pezzo surfaces and Weyl group actions on Picard lattices, we determine the Betti diagrams of the Cox rings of del Pezzo surfaces of degree at most four.
\end{abstract}

\maketitle


\section{Introduction}

In 1958, Nagata constructed the first counterexample to Hilbert's 14th problem, which asked whether the ring of invariance by a group action is finitely generated (\cite{N}).
Let $k$ be an algebraically closed field of characteristic zero.
Consider a group action on a polynomial ring $R_r:=k[x_1, \ldots, x_r, y_1, \ldots, y_r]$ by $r$-th power of the additive group $\mathbb{G}_a(k)^r$ given by $x_i \mapsto x_i, y_i \mapsto y_i + t_i x_i$ for $(t_1, \ldots, t_r) \in \mathbb{G}_a(k)^r$. Take a general linear subspace $G_n \subset \mathbb{G}_a(k)^r$ of dimension $n$ with the induced group structure. There is an induced group action of $G_n$ on $R_r$. Note that
$$
R_r^{G_n} \simeq \Cox(X_r^{r-n-1})
$$
where $X_r^{r-n-1}$ is the blow-up of $\P^{r-n-1}$ at $r$ general points (see \cite{Muk1}). The Cox ring was first introduced by Hu and Keel (\cite{HK}) based on Cox's construction of toric varieties (\cite{C}).
The Cox ring of a smooth projective variety $X$ with a finitely generated Picard group is defined as
$$
\Cox(X):=\bigoplus_{L \in \Pic(X)} H^0(X, L).
$$
Their explicit description captures much of important geometric and arithmetic properties (see e.g., \cite{HK} and \cite{CTS}). Nagata, in fact, proved that $\Cox(X_r^{r-n-1})$ is not finitely generated when $r=13, n=10$. In general, $\Cox(X_r^{r-n-1})$ is not finitely generated if and only if $\frac{1}{2} + \frac{1}{r-n} + \frac{1}{n} \leq 1$ (see \cite{CT}, \cite{Muk1}, \cite{Muk2}). In particular  $\Cox(X_r^2)$ is finitely generated when $r \leq 8$. It is well known that every del Pezzo surface, a smooth projective surface with ample anticanonical divisor, is isomorphic to either $X_r^2$ for $0 \leq r \leq 8$ or $\P^1 \times \P^1$.
To study the ring of invariance when it is finitely generated, Hilbert proved his famous syzygy theorem, and introduced the Hilbert function and the Hilbert polynomial. Thus it is quite natural to study syzygies of Cox rings of del Pezzo surfaces.

The obvious first step to study syzygies is to study generators.
Starting with the pioneering work of Batyrev and Popov (\cite{BP}), there has been considerable interest in systematic understanding the generators of ideals of Cox rings of del Pezzo surfaces (see e.g., \cite{P}, \cite{D}, \cite{STV}, \cite{LV}, \cite{SS}, \cite{TVAV}, \cite{SX}). The main problem was known as the Batyrev-Popov conjecture posed in \cite{BP}. To state the conjecture precisely, we introduce some notations. Let $S_r:=X_r^2$ be a del Pezzo surface of degree $(-K_{S_r})^2=9-r$. It is well known that if $r \leq 3$, then $S_r$ is a toric variety so that $\Cox (S_r)$ is a polynomial ring. Thus from now on we assume that $4 \leq r \leq  8$.
It was shown in \cite{BP} that $\Cox(S_r)$ is generated by distinguished global sections, which are corresponding sections of $(-1)$-curves and basis of $H^0(S_8, \mathcal{O}_{S_8}(-K_{S_8}))$. Let $G_r$ be a basis of the vector space $\bigoplus_{E: \text{$(-1)$-curve}} H^0(S_r, E) $ for $r \leq 7$ or
$\left(\bigoplus_{E: \text{$(-1)$-curve}} H^0(S_8, E) \right) \oplus H^0(S_8, -K_{S_8})$. Then we can write
$$
\Cox(S_r) = k[G_r]/I_r
$$
where $I_r$ is the ideal of relations between the generators of $\Cox(S_r)$. Cox rings in general are usually considered as Picard group graded rings, but for Cox rings of del Pezzo surfaces, we have a natural $\Z$-grading by taking intersection with the anticanonical divisor class. See Subsection \ref{coxsubsec} for more detials on Cox rings of del Pezzo surfaces.
The Batyrev-Popov conjecture is the following.

\begin{conjecture}[Batyrev-Popov]
$I_r$ is generated by quadrics
\end{conjecture}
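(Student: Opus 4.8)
The plan is to reduce the statement to a finite verification governed by the Weyl group symmetry, and then to carry it out by a blow-down induction on $r$. Since $\Cox(S_r)=k[G_r]/I_r$ is a quotient of a polynomial ring all of whose generators have anticanonical degree one, $I_r$ is generated in $\Z$-degree at most two if and only if for every class $d\in\Pic(S_r)$ with $-K_{S_r}\cdot d\ge 3$ one has
$$(I_r)_d=\sum_{g\in G_r}x_g\cdot(I_r)_{d-\deg g},$$
the sum being over the distinguished generators. The Weyl group $W_r$ acts on $\Pic(S_r)$, permuting the $(-1)$-curves and fixing the anticanonical generators, so $I_r$ is a $W_r$-submodule of $k[G_r]$ and the displayed identity only needs to be checked for one representative $d$ of each $W_r$-orbit. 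This is where the multigrading pays off: the orbits with $(I_r)_d\ne 0$ are severely constrained, and all but finitely many will be cleared by the induction, leaving an explicit finite list.

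\emph{The inductive engine.} Induct on $4\le r\le 8$. In the base case $S_4$ is the degree-five del Pezzo surface, $\Cox(S_4)$ is the Pl\"ucker coordinate ring of $\mathrm{Gr}(2,5)$, and its ideal is classically generated by the five Pl\"ucker quadrics. For the step fix a $(-1)$-curve $E_0\subset S_r$ and the blow-down $\pi\colon S_r\to S_{r-1}$. Twisting the ideal sequence of $E_0$ by $\mathcal{O}_{S_r}(d)$ yields
$$0\longrightarrow H^0(S_r,d-E_0)\xrightarrow{\ \cdot x_{E_0}\ }H^0(S_r,d)\longrightarrow H^0\big(\P^1,\mathcal{O}_{\P^1}(d\cdot E_0)\big)\longrightarrow 0,$$
where surjectivity follows from $H^1(S_r,d-E_0)=0$, valid by Kawamata--Viehweg whenever $d-E_0+K_{S_r}$ is nef and checked directly otherwise. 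This separates the part of $\Cox(S_r)$ divisible by $x_{E_0}$ from the part pulled back from $S_{r-1}$, and lets one compare $I_r$ with $I_{r-1}$ degree by degree: a relation of degree $d$ not already a consequence of $I_{r-1}$ together with the quadratic relations through $x_{E_0}$ can arise only in the finitely many degrees where this sequence fails to match up cleanly with the generators, which are exactly the orbits isolated above.

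\emph{The finite check and the main obstacle.} It remains to verify the identity above in each leftover orbit. When $-K_{S_r}\cdot d=3$ the orbits are few (the class $-K_{S_r}$, the classes $-K_{S_r}+E$ and similar small perturbations, and the conic-plus-line classes), and for each one a direct computation with the intersection form and the classification of $(-1)$-curves and conic pencils shows that the quadratic relations --- the binomials $x_Ex_{E'}-x_Fx_{F'}$ with $E+E'=F+F'$ and the relations coming from conic pencils --- already span $(I_r)_d$, with $W_r$-equivariance reducing each computation to linear algebra on a single representative. When $-K_{S_r}\cdot d\ge 4$ and the degree is not settled by the engine, one shows that some $(-1)$-curve $E$ has $d-E$ effective with $H^0(S_r,d-E)\otimes H^0(S_r,E)\to H^0(S_r,d)$ surjective, so $(I_r)_d$ is spanned by the $x_E\,(I_r)_{d-E}$ and the inductive identity for $d-E$ concludes. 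The genuinely hard part is $r=7$ and especially $r=8$: the $E_7$ and $E_8$ combinatorics make several low anticanonical-degree orbits exceptional, and for $r=8$ the two extra anticanonical generators, the base point of $|-K_{S_8}|$, and the fact that $\Cox(S_8)$ is only barely finitely generated make the dimension counts extremely tight. The $H^1$-vanishing needed to run the exact sequence is the technical pressure point there, and one should expect to isolate a short explicit list of bad degrees for $r=8$ and prove quadric-generation in them by exhibiting the relations directly, for instance as pullbacks of the quadratic relations on lower $S_{r'}$ under the chain of blow-down morphisms.
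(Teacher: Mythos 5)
The paper does not prove this statement at all: it records it as the Batyrev--Popov conjecture and, in Section \ref{appsec} (Theorem \ref{bp}), simply cites \cite{TVAV} and \cite{SX} for the proof. So there is no internal argument to compare with; the question is whether your sketch stands on its own, and it does not. The sound parts are the reformulation (quadratic generation $\Leftrightarrow$ $(I_r)_d=\sum_g x_g(I_r)_{d-\deg g}$ for $\deg d\ge 3$, since all generators have anticanonical degree one), the Weyl-group reduction to orbit representatives (cf.\ Lemma \ref{weyl}), and the base case $X_4=\mathrm{Gr}(2,5)$. But the ``inductive engine'' is asserted rather than proved: the restriction sequence for a $(-1)$-curve $E_0$ splits $H^0(S_r,d)$ into the part divisible by $x_{E_0}$ and sections of $\mathcal{O}_{\P^1}(d\cdot E_0)$, which is not the same as ``the part pulled back from $S_{r-1}$'' (pullbacks and proper transforms of $(-1)$-curves differ by multiples of $E_0$, so the generators of $\Cox(S_{r-1})$ do not map to generators of $\Cox(S_r)$ in a degree-compatible way without extra bookkeeping), and the key claim that any relation not coming from $I_{r-1}$ plus quadrics through $x_{E_0}$ lives in finitely many explicitly identifiable orbits is exactly the content of the conjecture in the inductive step and is nowhere established. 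The vanishing hypothesis is also mis-stated: Kawamata--Viehweg gives $H^1(S_r,d-E_0)=0$ when $d-E_0-K_{S_r}$ is nef and big, not when $d-E_0+K_{S_r}$ is nef, and the degrees where this fails and must be ``checked directly'' are precisely the ones where extra relations could hide.

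More seriously, for $r=7$ and $r=8$ you explicitly defer the verification (``one should expect to isolate a short explicit list of bad degrees\dots''). That is not a proof but a restatement of the difficulty: the strategy you outline is essentially the one pursued in the earlier partial results (\cite{BP}, \cite{D}, \cite{LV}, \cite{STV}), which closed the cases $r\le 6$ (and with effort $r=7$) but not $r=8$, where the base point of $|-K_{S_8}|$, the two extra anticanonical generators, and the additional quadrics from $-K_{S_8}+E$ and $-2K_{S_8}$ defeat the naive induction. The actual proofs required new input: \cite{TVAV} via a careful analysis tailored to degrees one and two, and \cite{SX} by an entirely different method (Sagbi/toric degenerations of Cox--Nagata rings). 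As written, your argument has a genuine gap exactly at the cases that make the theorem hard.
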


This conjecture is finally confirmed by \cite{SX} and \cite{TVAV}.

A possible next step to study syzygies is  to compute the \emph{Green-Lazarsfeld index}, which was introduced in \cite{BCR} after Green and Lazarsfeld's works (\cite{GL1} and \cite{GL2}). It is defined to be the largest integer $p$ such that $b_{i,j}=0$ for all $i \leq p$ and $j \geq i+2$, where $b_{i,j}$ is the graded Betti number. We denote it by $\index(\Cox(S_r))$.
The Batyrev-Popov conjecture can be restated as $\index(\Cox(S_r)) \geq 1$ for $r \geq 4$.
We prove the following.

\begin{theorem}[Theorem \ref{index}]\label{indexthm}
Let $S_r$ be a del Pezzo surface of degree $9-r$. Then
$\index(\Cox(S_4))=2$ and $\index(\Cox(S_r))=1$ for $5 \leq r \leq 8$.
\end{theorem}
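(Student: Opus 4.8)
The plan is to reformulate the statement in terms of the vanishing of graded Betti numbers $b_{i,j}=b_{i,j}(\Cox(S_r))$ and to treat the lower and upper bounds on $\index$ separately. Every element of $G_r$ has anticanonical degree $1$ (each $(-1)$-curve $E$ satisfies $-K_{S_r}\cdot E=1$, and $(-K_{S_8})^2=1$), so $\Cox(S_r)$ is generated in degree $1$; hence $b_{0,j}=0$ for $j\ge 1$, and $\index(\Cox(S_r))\ge 1$ is equivalent to $b_{1,j}=0$ for $j\ge 3$, i.e.\ to the Batyrev--Popov statement that $I_r$ is generated by quadrics, now a theorem of \cite{SX} and \cite{TVAV}. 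So $\index(\Cox(S_r))\ge 1$ for all $4\le r\le 8$, and it remains to prove: for $r=4$, that $b_{2,j}=0$ for all $j\ge 4$ and $b_{3,j}\neq 0$ for some $j\ge 5$; for $5\le r\le 8$, that $b_{2,4}\neq 0$.

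For $r=4$ I would invoke the classical identification of $\Cox(S_4)$ with the homogeneous coordinate ring of the Grassmannian $G(2,5)\subset\P^9$ in its Pl\"ucker embedding. The ten elements of $G_4$ are the sections of the ten $(-1)$-curves, all of anticanonical degree $1$, so the $\Z$-grading by anticanonical degree is exactly the Pl\"ucker grading and $I_4$ is the Pl\"ucker ideal, generated by the five Pl\"ucker quadrics. The minimal free resolution of $\Cox(S_4)$ over $k[G_4]$ is the self-dual, arithmetically Gorenstein, codimension-$3$ complex
\[
0\longrightarrow k[G_4](-5)\longrightarrow k[G_4](-3)^{\oplus 5}\longrightarrow k[G_4](-2)^{\oplus 5}\longrightarrow k[G_4]\longrightarrow \Cox(S_4)\longrightarrow 0,
\]
so $b_{1,2}=b_{2,3}=5$, $b_{3,5}=1$, and $b_{i,j}=0$ for all other $(i,j)$ with $i\ge 1$; in particular $b_{2,j}=0$ for $j\ge 4$ and $b_{3,5}\neq 0$, whence $\index(\Cox(S_4))=2$. (If one prefers to avoid $G(2,5)$, the same Betti numbers drop out of the direct computation of the Betti diagram of $\Cox(S_4)$; the Grassmannian picture only makes the answer transparent.)

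For $5\le r\le 8$ the point is to exhibit a non-linear first syzygy among the quadrics. Since $\operatorname{Tor}_2^{k[G_r]}(\Cox(S_r),k)$ is $\Pic(S_r)$-graded and $b_{2,j}=\sum_{-K_{S_r}\cdot D=j}b_{2,D}$, it suffices to produce a single class $D\in\Pic(S_r)$ with $-K_{S_r}\cdot D=4$ and $b_{2,D}:=\dim\operatorname{Tor}_2^{k[G_r]}(\Cox(S_r),k)_D\neq 0$. For a fixed $D$ this is a finite linear-algebra computation: $b_{2,D}$ is the dimension of the homology at the middle term of the multidegree-$D$ strand of the Koszul complex
\[
\bigwedge^{3}\Cox(S_r)_1\otimes\Cox(S_r)\;\longrightarrow\;\bigwedge^{2}\Cox(S_r)_1\otimes\Cox(S_r)\;\longrightarrow\;\Cox(S_r)_1\otimes\Cox(S_r),
\]
which involves only the spaces $\Cox(S_r)_{D'}=H^0(S_r,D')$ with $D'$ obtained from $D$ by subtracting the multidegrees of at most three of the generators, together with the multiplication maps among them. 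The Weyl group $W(E_r)$ acting on $\Pic(S_r)$ permutes these computations, so one only has to run it for a representative of each Weyl orbit of classes of anticanonical degree $4$, and one needs a nonzero answer for just one orbit. I would anchor this at $r=5$, where $\Cox(S_5)$ is the Cox ring of the degree-four del Pezzo surface and admits an explicit presentation, so a suitable $D$ can be chosen and checked by hand; for $r=6,7,8$ one repeats the orbitwise computation, using the blow-down inclusions $\Cox(S_5)\hookrightarrow\Cox(S_6)\hookrightarrow\Cox(S_7)\hookrightarrow\Cox(S_8)$ to guide the choice of $D$.

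The main obstacle is exactly this last step: exhibiting the non-linear syzygy, equivalently the \emph{lower} bound $b_{2,4}(\Cox(S_r))\neq 0$. Such lower bounds are not accessible by semicontinuity --- which only bounds Betti numbers from above under specialization --- so one cannot simply degenerate $\Cox(S_r)$ to a combinatorial model; one must either pin down an honest syzygy in a well-chosen multidegree as above, or run the equivalent numerical comparison, which presupposes independent control of the linear strand $b_{1,2},b_{2,3}$ and of the Hilbert function of $\Cox(S_r)$ in degree $4$ (and a check that no minimal first syzygy of degree $\ge 5$, nor a linear third syzygy, intervenes). Carrying this out uniformly for $r=5,6,7,8$ --- in particular for $r=8$, where $\Cox(S_8)$ has several hundred generators --- is where the $\Pic(S_r)$-multigrading and the $W(E_r)$-representation theory developed for the Hilbert-function and regularity computations earn their keep.
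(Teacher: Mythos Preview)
Your outline is sound, and for $r=4$ it matches the paper exactly (the Grassmannian $Gr(2,5)$ and its Gorenstein codimension-$3$ resolution). The substantive difference is at $r=5$, where the paper avoids any direct syzygy or multidegree computation by the numerical shortcut you mention but overcomplicate. Once the Hilbert function of $\Cox(S_5)$ is known (Theorem~\ref{hilftthm}, Corollary~\ref{Bcox}), one reads off the alternating sum $B_4(\Cox(S_5))=\sum_i(-1)^i b_{i,4}=7$. Since $I_5$ is generated in degree $2$, minimality of the resolution forces $b_{i,4}=0$ for $i\ge 4$; Batyrev--Popov gives $b_{1,4}=0$; and $b_{0,4}=0$ trivially. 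Hence $B_4=b_{2,4}-b_{3,4}=7$, so $b_{2,4}\ge 7>0$. Contrary to what you write, no control of $b_{1,2}$ or $b_{2,3}$ is needed, and there is nothing to check about ``a linear third syzygy'': $b_{3,4}$ may well be nonzero (in fact $b_{3,4}(\Cox(S_5))=3$), but the positivity of $B_4$ absorbs it.

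For $6\le r\le 8$ the paper, like you, reduces to $r=5$: the sixteen $(-1)$-curves on $S_5$ remain $(-1)$-curves on $S_r$, so $I_5\subset I_r$ inside $k[G_r]$, and the paper asserts $b_{2,4}(\Cox(S_r))\ge b_{2,4}(\Cox(S_5))>0$ directly. Your plan to locate a specific multidegree $D$ with $b_{2,D}\ne 0$ and carry it up the blow-up tower is in the same spirit, and the paper does eventually produce the witness you want---$b_{2,2Q}(\Cox(S_r))\ne 0$ for every conic $Q$ (Remark~\ref{b2q})---but only as a byproduct of the full Betti computation for $\Cox(S_5)$ in Section~\ref{syzdeg4subsec}, not as part of the proof of Theorem~\ref{index}. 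Note also that the Hilbert-function shortcut is genuinely specific to $r=5$: already for $r=6$ one finds $B_4(\Cox(S_6))<0$, so the reduction step to $r=5$ is not optional. What your approach buys is an explicit multigraded reason for the failure of $N_2$; what the paper's approach buys is that the whole argument for $5\le r\le 8$ fits in three lines once $H_{\Cox(S_5)}$ is in hand.
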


We first note that $\index(S_r)) \geq 1$ is equivalent to the Batyrev-Popov conjecture. We further argue to obtain the upper bound for $\index(S_r)$.
See Subsections \ref{bpsubsec} and \ref{glsubsec} for the complete proof.
Our proof is elementary and heavily depends on the computation of some syzygetic invariants of $\Cox(S_r)$.

The main purpose of the present paper is to calculate important invariants of Cox rings of del Pezzo surfaces such as the \emph{Castelnuovo-Mumford regularity} $\reg(\Cox(S_r))$, the \emph{projective dimension} $\pd(\Cox(S_r))$, the \emph{Hilbert function} $H_{\Cox(S_r)}(t)$, and the \emph{Hilbert polynomial} $P_{\Cox(S_r)}(t)$.

\begin{theorem}[Corollaries \ref{hilft=pol} and \ref{regpd} and Theorem \ref{hilftthm}]\label{mainthm}
Let $S_r$ be a del Pezzo surface of degree $9-r$ where $4 \leq r \leq 8$. Then we have the following.\\
$(1)$ $\reg(\Cox(S_r))=2(r-3)$.\\
$(2)$ $\pd(\Cox(S_r))=|G_r|-r-3$.\\
$(3)$ The Hilbert function $H_{\Cox(S_r)}(t)$ and the Hilbert polynomial $P_{\Cox(S_r)}(t)$ coincide, and they are given as follows.
{\small
\[
\begin{array}{c|c}
r & H_{\Cox(S_r)}(t)=P_{\Cox(S_r)}(t) \\
\hline
4 &\frac{1}{6!}(5t^6 + 75 t^5 + 455t^4 + 1425t^3 + 2420t^2 + 2100 t + 720)\\
\hline
5 & \frac{1}{7!}(34t^7 + 476t^6+2884t^5+9800t^4 +20146t^3+25004t^2+17256t+5040)\\
\hline
6 &\frac{1}{8!}(372t^8 + 4464t^7 + 25200 t^6 + 86184 t^5 \\
&+ 193788 t^4 + 291816 t^3 + 284640 t^2 + 161856 t + 40320)\\
\hline
7 &\frac{1}{9!}(9504 t^9 + 85536 t^8 + 412992 t^7 + 1294272 t^6 + 2860704 t^5  + 4554144 t^4
\\
&+ 5125248 t^3 + 3863808 t^2 + 1752192 t + 362880)\\
\hline
8 &\frac{1}{10!}(1779840 t^{10} + 8899200 t^9 + 32140800 t^8 + 75168000 t^7 + 137531520 t^6\\
   & + 186883200 t^5+ 191635200 t^4 + 141696000 t^3  + 74183040 t^2 +  24624000 t + 3628800)\\
\end{array}
\]
}
\end{theorem}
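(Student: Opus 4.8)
The three parts all flow from one structural fact about $R:=\Cox(S_r)$ equipped with its $\Z$-grading by anticanonical degree $L\mapsto(-K_{S_r})\cdot L$: under this grading every element of $G_r$ has degree $1$ (a $(-1)$-curve $E$ satisfies $(-K_{S_r})\cdot E=1$, and for $r=8$ also $(-K_{S_8})^2=1$), so $R=k[G_r]/I_r$ is standard graded in $|G_r|$ variables, of Krull dimension $\dim S_r+\rank\Pic(S_r)=r+3$. The input I would use --- citing it from the structure theory of Cox rings of del Pezzo surfaces, or reproving it via a degree-one homogeneous system of parameters --- is that $R$ is Cohen--Macaulay. Granting this, part $(2)$ is immediate from the graded Auslander--Buchsbaum formula: $\pd_{k[G_r]}R=|G_r|-\operatorname{depth}R=|G_r|-(r+3)$.

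For part $(1)$ I would determine the $a$-invariant from the canonical module. If $\pi\colon\widehat{S_r}\to S_r$ denotes the characteristic space (a torsor under the N\'eron--Severi torus, with $\Spec R$ containing $\widehat{S_r}$ as a big open subscheme), then $\pi_*\mathcal O_{\widehat{S_r}}=\bigoplus_{L}L$ and $\omega_{\widehat{S_r}}=\pi^*\omega_{S_r}$, since torsors under tori have trivial relative dualizing sheaf; hence $\omega_R=\bigoplus_{L\in\Pic(S_r)}H^0(S_r,K_{S_r}+L)$, with the $L$-summand placed in degree $L$. Now $K_{S_r}+L$ is effective only for $L=-K_{S_r}+L'$ with $L'$ effective, and then $(-K_{S_r})\cdot L=(9-r)+(-K_{S_r})\cdot L'\ge 9-r$, with equality precisely for $L=-K_{S_r}$; so the relabelling $L\mapsto L+K_{S_r}$ gives a graded isomorphism $\omega_R\cong R(r-9)$. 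In particular $R$ is Gorenstein with $a$-invariant $a(R)=r-9$, and since $R$ is Cohen--Macaulay and standard graded, $\reg R=a(R)+\dim R=(r-9)+(r+3)=2(r-3)$.

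For part $(3)$, observe first that $a(R)=r-9<0$ for $r\le 8$; as $R$ is Cohen--Macaulay, the defect $H_R(t)-P_R(t)=(-1)^{\dim R}\dim_k H^{\dim R}_{\mathfrak m}(R)_t$ vanishes for every $t\ge 0$, giving $H_R=P_R$ (Corollary \ref{hilft=pol}) and letting us evaluate the polynomial for $t\gg 0$. There $P_R(t)=\sum_{L\ \mathrm{eff},\ (-K_{S_r})\cdot L=t}h^0(S_r,L)$, and I would split each term by Riemann--Roch as $\chi(S_r,L)+h^1(S_r,L)$ (with $h^2=0$ because $K_{S_r}-L$ is never effective for $L$ effective). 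The $\chi$-contribution $\sum\bigl(\tfrac12(L^2+t)+1\bigr)$ is a weighted count of lattice points in the integral dilation $t\Delta$ of the lattice polytope $\Delta=\Eff(S_r)\cap\{(-K_{S_r})\cdot(\cdot)=1\}=\operatorname{conv}\{(-1)\text{-curves}\}$, hence a polynomial in $t$ of degree $r+2$ carrying the top terms of $P_R$. The $h^1$-contribution is supported on the non-nef effective classes: writing $L=P+F$ for its positive and fixed parts ($P$ nef, $F$ a sum of $(-1)$-curves) one has $h^1(S_r,L)=-\bigl(P\cdot F+\tfrac12(F^2+(-K_{S_r})\cdot F)\bigr)$, which vanishes unless $F$ lies outside an explicit finite list of configurations; each surviving configuration contributes a strictly lower-degree Ehrhart polynomial (a lattice-point count over a face of $\Nef(S_r)$). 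These corrections I would organize, and keep finite, using the action of the Weyl group $W_r$ on $\Pic(S_r)$, which preserves each anticanonical slice and under which $h^0(S_r,-)$ is invariant (its elements being realized by pseudo-automorphisms of $S_r$, e.g. Cremona transformations based at triples of the blown-up points), so that one only needs to run over $W_r$-orbits of nef classes and their stabilizers. Summing the two parts yields the tabulated polynomials.

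The main obstacle is precisely this last enumeration when $r=7,8$: the Weyl groups $W(E_7)$ and $W(E_8)$ together with the $56$ and $240$ exceptional curves make the $W_r$-orbit decompositions, the orbit sizes (computed through stabilizers, which are reflection subgroups attached to sub-root-systems), and the bookkeeping of the $h^1$-corrections genuinely laborious, and one must rely on the root-theoretic classification of these orbits to carry it through explicitly. Once the polynomials are known, however, the Cohen--Macaulay/Gorenstein input makes parts $(1)$ and $(2)$ formal, and $r=4$ --- where $R$ is the homogeneous coordinate ring of $\mathrm{Gr}(2,5)$, a codimension-three Gorenstein (Pfaffian) ring --- furnishes an independent check.
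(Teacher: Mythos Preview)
Your treatment of parts $(1)$, $(2)$, and the identity $H_R=P_R$ in $(3)$ matches the paper's: both rest on the Gorenstein property with $\omega_R\cong R(r-9)$ (the paper imports this from \cite{P} and \cite{HaKu}, you derive it via the characteristic torsor), yielding the $a$-invariant, regularity, projective dimension via Auslander--Buchsbaum, and the local-cohomology vanishing that forces $H_R=P_R$. For the explicit polynomials, however, you take a genuinely different route. The paper passes to a general curve section $C_r$ of $X_r=\Proj R$: since $X_r$ is arithmetically Cohen--Macaulay, the Hilbert polynomial is determined by $\deg C_r$, $g(C_r)$, and the half-canonical value $h^0(C_r,\mathcal{O}(r-4))$, and these in turn are fixed by directly enumerating $h^0(X_r,\mathcal{O}(k))=\sum_{\deg D=k}h^0(S_r,D)$ only for $1\le k\le r-3$, peeling off $(-1)$-curves (Lemma~\ref{computeh0lem}) to reduce each $h^0(D)$ to a nef class. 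Your Ehrhart-plus-$h^1$-correction approach is more structural --- it explains \emph{a priori} why $P_R$ is a polynomial without interpolation --- but the $h^1$-bookkeeping is subtler than your sketch indicates: Zariski fixed parts are supported on pairwise disjoint sets of $(-1)$-curves, so there are only finitely many support types, but each carries arbitrary multiplicities and contributes a convolution-type sum that must itself be shown polynomial and then evaluated. The paper's interpolation-from-few-values sidesteps this entirely and requires far less enumeration, which is where the real saving for $r=7,8$ lies.
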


For the proof of Theorem \ref{mainthm}, see Subsection \ref{invsubsec} and Section \ref{hilsec} whose main ingredient is Popov's geometric result that $\Proj \Cox(S_r) \subset \P^{|G_r|-1}$ is arithmetically Gorenstein (\cite{P}; Proposition \ref{popov}).

Finally, as an application of Theorem \ref{mainthm}, we determine the Betti diagrams of $\Cox(S_r)$ for $r=4,5$.

\begin{theorem}[Theorems \ref{bettiS_4} and \ref{bettiS_5}]\label{bettithm}
We have the following.\\
$(1)$ The Betti diagram of $\Cox(S_4)$ is given as follows.
\noindent\[
\begin{array}{cccc}
1 & - & - & - \\
- & 5 & 5 & -\\
- & - & - & 1
\end{array}
\]
$(2)$ The Betti diagram of $\Cox(S_5)$ is given as follows.
\noindent\[
\begin{array}{ccccccccc}
1 & - & - & - & - & - & - & - & -\\
- & 20 & 48 & 3 & -   & -   & -   & -   &-\\
- & - & 10 & 176 & 280   & 176   & 10   & -   &-\\
- & - & -  & -     & -       & 3   & 48   & 20   &-\\
- & - & - & - & -   & -   & -   & -   &1
\end{array}
\]
\end{theorem}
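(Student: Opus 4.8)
The plan is to determine every Betti number from the numerical and structural data already available --- Popov's arithmetic Gorensteinness (Proposition~\ref{popov}), the projective dimension and regularity of Theorem~\ref{mainthm}, the Green--Lazarsfeld index of Theorem~\ref{indexthm}, and the Hilbert function --- supplemented, for $S_5$, by the $\Pic(S_5)$-multigrading and the Weyl group action on the Picard lattice. Write $\Cox(S_r)=k[G_r]/I_r$ with $k[G_r]$ a \emph{standard} graded polynomial ring in $N_r:=|G_r|$ variables (every distinguished generator has anticanonical degree $1$; here $N_4=10$ and $N_5=16$). Since $G_r$ is a $k$-basis of $(\Cox(S_r))_1$, the ideal $I_r$ contains no linear form, so minimality of the resolution forces $b_{i,j}=0$ for all $j\le i$ with $i\ge 1$. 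As $\Cox(S_r)$ is Cohen--Macaulay of Krull dimension $r+3$, its minimal free resolution over $k[G_r]$ has length $\pd(\Cox(S_r))=N_r-r-3$, and arithmetic Gorensteinness makes it self-dual; together with $\reg(\Cox(S_r))=2(r-3)$ this reads
\[
b_{i,j}=b_{\,N_r-r-3-i,\ N_r+r-9-j}.
\]
Finally $b_{i,j}=0$ whenever $j-i>2(r-3)$, and Theorem~\ref{indexthm} gives $b_{1,j}=0$ for $j\ge 3$ (so $I_r$ is generated by quadrics), with the further vanishing $b_{2,j}=0$ for $j\ge 4$ when $r=4$.

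For $S_4$ this is already enough. Here $\pd=3$ and $\reg=2$, so the only a priori nonzero entries are $b_{0,0}=1$, $b_{1,2}$, $b_{2,3}$ and $b_{3,5}$; self-duality forces $b_{3,5}=b_{0,0}=1$ and $b_{2,3}=b_{1,2}$. Since $I_4$ has no linear part, $b_{1,2}=\dim(I_4)_2=\binom{N_4+1}{2}-H_{\Cox(S_4)}(2)=55-50=5$ from the Hilbert function of Theorem~\ref{mainthm}. (Equivalently, $H_{\Cox(S_4)}(t)(1-t)^{N_4}=1-5t^2+5t^3-t^5$ reads off $b_{1,2}=b_{2,3}=5$ and $b_{3,5}=1$ at once.) This is the stated diagram.

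For $S_5$ one has $\pd=8$, $\reg=4$, and the duality becomes $b_{i,j}=b_{8-i,\,12-j}$. The vanishing $b_{i,j}=0$ for $j\le i$, the regularity bound $j-i\le 4$, the index vanishing $b_{1,j}=0$ for $j\ge3$, and self-duality together reduce the unknowns to the linear strand $b_{p,p+1}$ $(1\le p\le 6)$ and the three numbers $b_{2,4},\,b_{3,5},\,b_{4,6}$ --- all other entries being $0$, forced equal to $1$, or the dual of one of these. From the Hilbert function, $b_{1,2}=\dim(I_5)_2=\binom{17}{2}-H_{\Cox(S_5)}(2)=136-116=20$, so $I_5$ is minimally generated by $20$ quadrics. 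Expanding the Hilbert series numerator
\[
H_{\Cox(S_5)}(t)(1-t)^{16}=1-20t^2+48t^3+7t^4-176t^5+280t^6-176t^7+7t^8+48t^9-20t^{10}+t^{12}
\]
and matching coefficients with $\sum_i(-1)^i b_{i,j}$ (re-expressing the row-$3$ contributions by duality) recovers $b_{2,3}=48$ and yields the three relations $b_{2,4}-b_{3,4}=7$, $b_{4,5}+b_{6,7}-b_{3,5}=-176$, $b_{4,6}-2b_{5,6}=280$; the remaining coefficients give nothing new. It therefore remains to compute $b_{3,4}$ and to prove $b_{4,5}=b_{5,6}=b_{6,7}=0$, i.e.\ that the linear strand of $\Cox(S_5)$ has length exactly $3$. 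This is where the multigrading enters: $b_{p,j}=\sum_{-K_{S_5}\cdot D=j}b_{p,D}$, and $b_{p,D}$ depends only on the orbit of $D$ under the Weyl group $W(D_5)$ acting on $\Pic(S_5)$. Enumerating the finitely many $W(D_5)$-orbits of classes $D$ with $-K_{S_5}\cdot D\le 7$ that can carry a summand of the $p$-th syzygy module, and computing the relevant local Betti number in each case (for the linear strand a Koszul-homology computation with the $20$ quadrics restricted to a single multidegree), one obtains $b_{3,4}=3$ and $b_{p,p+1}=0$ for $p\ge 4$. Substituting into the three relations and applying $b_{i,j}=b_{8-i,12-j}$ fills in the whole table.

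The main obstacle is this last step for $S_5$: the purely structural input (Gorenstein self-duality, regularity, index, Hilbert function) genuinely underdetermines the diagram, leaving a four-parameter family of candidates, so one must compute honestly with the $20$ quadratic relations; the passage to the $\Pic(S_5)$-grading and the reduction to one Weyl orbit at a time is what keeps that computation finite and organized. Checking that no linear syzygy survives past homological degree $3$, and extracting $b_{3,4}=3$, is the delicate point; once these are in hand, the remaining entries follow formally from Theorems~\ref{mainthm} and~\ref{indexthm} and Proposition~\ref{popov}.
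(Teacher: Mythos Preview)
Your overall strategy coincides with the paper's: for $S_4$ the arguments are identical, and for $S_5$ both reduce---via Gorenstein self-duality, regularity, and the Hilbert series numerator---to pinning down $b_{3,4}$ and the vanishing of the linear strand past homological degree $3$, using the $\Pic(S_5)$-grading and Weyl orbits to organise the work. Your relations $b_{2,4}-b_{3,4}=7$, $b_{4,5}+b_{6,7}-b_{3,5}=-176$, $b_{4,6}-2b_{5,6}=280$ are exactly what the paper uses.

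The gap is that you do not carry out the crucial step; the sentence ``computing the relevant local Betti number in each case, one obtains $b_{3,4}=3$ and $b_{p,p+1}=0$ for $p\ge4$'' is the whole content of the theorem for $S_5$, and you leave it as an assertion. The paper does execute it, and its methods are worth noting. For $b_{3,4}$ it treats the three degree-$4$ Weyl orbits separately: $b_{2,2Q}=1$ is immediate; $b_{2,C+E}=0$ is obtained by a {\tt Macaulay2} computation of $b_{2,2L-E_1-E_2}(\mathrm{in}(I_5))$ for a specific term order, combined with upper semicontinuity (and the paper remarks that a different representative $2L-E_1-E_3$ would fail this test for the initial ideal); $b_{2,-K_{S_5}}=0$ is shown by arguing, via Schreyer's theorem and an explicit Gr\"obner-basis calculation, that every candidate Koszul syzygy $fg-gf$ in that multidegree lies in the span of the degree-$3$ syzygies. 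For the vanishing $b_{4,5}=0$ the paper does \emph{not} enumerate degree-$5$ orbits at all: once one knows $b_{3,4}$ is concentrated in the single class $[-K_{S_5}]$, any degree-$5$ generator of the fourth syzygy module would have to sit in a class $-K_{S_5}+E$, and an elementary injectivity comparison between the multidegrees $-K_{S_5}$ and $-K_{S_5}+E$ forces $b_{4,-K_{S_5}+E}=0$. The vanishing of $b_{5,6}$ and $b_{6,7}$ then follows formally (no gaps in the linear strand), which is shorter than the orbit-by-orbit Koszul computation you propose.
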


We point out that the symmetry of the Betti diagrams is not an accident (see Corollary \ref{greendual} for the precise form of the duality).
Furthermore we also compute the $\Pic(S_r)$-graded minimal free resolutions of  $\Cox(S_r)$ for $r=4,5$. See Section \ref{syzdeg4subsec} for more details.

\medskip

The remianing part of this paper is organized as follows. We start in Section \ref{prelimsec} by reviewing known facts on del Pezzo surfaces, Cox rings, and syzygies of graded modules. In Section \ref{gorsec}, we then explain basic properties of Cox rings of del Pezzo surfaces. In particular, we show that Cox rings of del Pezzo surfaces are Gorenstein (Proposition \ref{popov}), and present some consequences in Subsection \ref{invsubsec} (Corollaries \ref{hilft=pol} and \ref{regpd}). Section \ref{hilsec} is devoted to the computation of the Hilbert functions of Cox rings of del Pezzo surfaces (Theorem \ref{hilftthm}). In Section \ref{appsec}, we compute the Green-Lazarsfeld indices of Cox rings of del Pezzo surfaces (Theorem \ref{index}). Finally, in Section \ref{syzdeg4subsec}, we determine the multigraded minimal free resolutions of $\Cox(S_4)$ and $\Cox(S_5)$ and consequently the Betti diagrams of them (Theorems \ref{bettiS_4} and \ref{bettiS_5}).

Throughout the paper, we work over an algebraically closed field $k$ of characteristic zero.

\section{Preliminaries}\label{prelimsec}

In this section, we collect some basic notions and facts which will be used throughout the paper.

\subsection{Del Pezzo surfaces}\label{delsubsec}
We briefly review basic properties of del Pezzo surfaces.

\begin{definition}
A \emph{del Pezzo surface} $S$ is a smooth projective surface over $k$ such that the anticanonical divisor $-K_S$ is ample. The \emph{degree} of a del Pezzo surface $S$ is the integer $(-K_S)^2$.
\end{definition}

A del Pezzo surface is isomorphic to either a blow-up of $\P^2$ at $r$ points in general position for $0 \leq r \leq 8$ or $\P^1 \times \P^1$. Here we fix some notations. Let $S_r$ be a del Pezzo surface obtained by a blow-up $\pi \colon S_r \to \P^2$ at $r$ points in general position for $0 \leq r \leq 8$ with exceptional divisors $E_1, \ldots, E_r$, and let $L:=\pi^*\mathcal{O}_{\P^2}(1)$.
Then $\Pic(S_r)=\Z[L]\oplus\Z[E_1] \oplus \cdots \oplus \Z[E_r]$, and $-K_{S_r}=3L-\sum_{i=1}^r E_i$.

\begin{definition}
An irreducible curve $C$ on a smooth projective surface $S$ is called a \emph{$(-1)$-curve} if $C^2=K_S.C=-1$.
\end{definition}

Recall that all $(-1)$-curves on a del Pezzo surface $S_r$ are given as follows (up to permutation).
\noindent\[
\begin{array}{l}
E_1  ~~(r \geq 1)\\
L-E_1-E_2 ~~(r \geq 2)\\
2L-E_1-E_2-E_3-E_4-E_5 ~~(r \geq 5)\\
3L-2E_1-E_2-E_3-E_4-E_5-E_6-E_7 ~~(r \geq 7)\\
4L-2E_1-2E_2-2E_3-E_4-E_5-E_6-E_7-E_8 ~~(r \geq 8)\\
5L-2E_1-2E_2-2E_3-2E_4-2E_5-2E_6-E_7-E_8 ~~(r \geq 8)\\
6L-3E_1-2E_2-2E_3-2E_4-2E_5-2E_6-2E_7-2E_8 ~~(r \geq 8)\\
\end{array}
\]
The number of $(-1)$-curves on $S_r$ is given as follows.
\noindent\[
\begin{array}{c|c|c|c|c|c|c|c|c|c}
r & 0& 1 & 2& 3& 4 & 5 & 6 & 7 & 8\\
\hline
\text{number of $(-1)$-curves} &0 & 1 & 3 & 6 & 10 & 16 & 27 & 56 & 240
\end{array}
\]

 From now on we assume that $3 \leq r \leq 8$ for convenience.

\begin{definition}
We call a nef divisor $Q$ on $S_r$ a \emph{conic} if $Q^2=0, -K_{S_r}.Q=2$ and a \emph{twisted cubic} if $C^2=1, -K_{S_r}.C=3$.
\end{definition}

Note that a plane conic (resp. twisted cubic) in $\P^3$ lying on a cubic surface $S_6 \subset \P^3$ is actually a conic (resp. twisted cubic) divisor on $S_6$.

For instance, the divisor $L-E_1$ is a conic on $S_r$, and the divisor $L$ is a twisted cubic on $S_r$.
The complete linear system of a conic $Q$ on $S_r$ induces a fibration $f \colon S_r \to \P^1$ such that $Q=f^*\mathcal{O}_{\P^1}(1)$, and that of a twisted cubic $C$ on $S_r$ induces a morphism $\pi \colon S_r \to \P^2$ such that $C=\pi^*\mathcal{O}_{\P^2}(1)$.
The numbers of conics and twisted cubics on $S_r$ are given as follows.
\noindent\[
\begin{array}{c|c|c|c|c|c|c}
r & 3& 4 & 5 & 6 & 7 & 8\\
\hline
\text{number of conics} & 3 & 5 &  10 & 27 & 126 & 2160 \\
\hline
\text{number of twisted cubics} & 2 & 5 & 16 & 72 & 576 & 17520
\end{array}
\]

\begin{proposition}[{\cite[Proposition 3.1 and Lemma 3.4]{TVAV}}]
The following hold.\\
$(1)$ The effective cone $\Eff(S_r$) is generated by $(-1)$-curve classes.\\
$(2)$ The nef cone $\Nef(S_r)$ is generated by classes of conics, twisted cubics, and $-K_{S_6},-K_{S_7},-K_{S_8}$.
\end{proposition}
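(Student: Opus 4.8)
The plan is to work with the Mori cone $\overline{NE}(S_r)=\Eff(S_r)$ and its dual $\Nef(S_r)$, the two inputs being adjunction on a del Pezzo surface and the cone theorem.

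For $(1)$: since $-K_{S_r}$ is ample, $\Eff(S_r)=\overline{NE}(S_r)$ is rational polyhedral and, by the cone theorem, every extremal ray is spanned by the class of an irreducible rational curve $\Gamma$ with $0<-K_{S_r}\cdot\Gamma\le 3$. From $2p_a(\Gamma)-2=\Gamma^2+K_{S_r}\cdot\Gamma$ we get $\Gamma^2=2p_a(\Gamma)-2+(-K_{S_r}\cdot\Gamma)\ge -1$. If $\Gamma^2=-1$, then $p_a(\Gamma)=0$ and $-K_{S_r}\cdot\Gamma=1$, so $\Gamma$ is a $(-1)$-curve. If $\Gamma^2\ge 0$, then $\Gamma$ meets every irreducible curve nonnegatively, hence is nef; it cannot be big, as a big class lies in the interior of $\Eff(S_r)$, so $\Gamma^2=0$, and then adjunction forces $-K_{S_r}\cdot\Gamma=2$ and $\Gamma\cong\P^1$. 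In that case $|\Gamma|$ defines a conic-bundle structure $S_r\to\P^1$ which, since $\rho(S_r)=r+1\ge 4$, has a reducible fibre $\ell+\ell'$ consisting of two $(-1)$-curves, so $[\Gamma]=[\ell]+[\ell']$ contradicts extremality. Hence every extremal ray of $\Eff(S_r)$ is a $(-1)$-curve class.

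For $(2)$: by duality $\Nef(S_r)=\Eff(S_r)^\vee=\{\,D:D\cdot E\ge 0\text{ for every }(-1)\text{-curve }E\,\}$, which is rational polyhedral by $(1)$; its extremal rays correspond to the facets of $\Eff(S_r)$, i.e.\ to the maximal subsets of $(-1)$-curves spanning a hyperplane. Such a ray is spanned by a primitive nef class $D$ for which, with $m\gg 0$, $\phi_{|mD|}\colon S_r\to Z$ contracts exactly the $(-1)$-curves $E$ with $D\cdot E=0$, and $\rho(Z)=1$. If $D^2=0$, then $\phi$ is a fibration over $\P^1$ and, by adjunction on a general fibre as in $(1)$, the primitive $D$ is a conic class. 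If $D^2>0$, then $D$ is big and nef, $\phi$ is a birational morphism contracting disjoint $(-1)$-curves onto $\P^2$, and the primitive $D$ is a twisted cubic class. It remains to note that conics and twisted cubics no longer generate $\Nef(S_r)$ once $r\ge 6$: there $-K_{S_r}$ is a nef class which is not a sum of conics and twisted cubics, and the conics, twisted cubics and $-K_{S_r}$ together do generate $\Nef(S_r)$. Conversely, each conic, each twisted cubic, and $-K_{S_6},-K_{S_7},-K_{S_8}$ is visibly nef.

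The main obstacle is in $(2)$: assembling a complete, non-redundant list of the facets of $\Eff(S_r)$ — equivalently, of the extremal contractions of $S_r$ — and pinning down exactly when $-K_{S_r}$ is forced. The efficient route is to exploit the Weyl group $W(\mathrm{E}_r)$ acting on $\Pic(S_r)$: it preserves $K_{S_r}$ and the intersection pairing and permutes $(-1)$-curves, conics and twisted cubics within their numerical types, so it is enough to check one representative of each $W$-orbit together with the invariant class $-K_{S_r}$, and then to carry out the finite verification from the explicit lists tabulated above. The degree-one case $r=8$ is the most delicate: $(-K_{S_8})^2=1$, the anticanonical map is only a pencil, and the relevant morphism to a surface must be produced from $|-2K_{S_8}|$ or $|-3K_{S_8}|$.
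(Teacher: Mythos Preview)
The paper does not prove this proposition itself; it cites \cite[Proposition 3.1 and Lemma 3.4]{TVAV}. Your argument for (1) is correct. For (2) there is a gap in the big case $D^2>0$: you assert that $\phi_{|mD|}$ contracts disjoint $(-1)$-curves onto $\P^2$ without justification. What is missing is the Hodge index theorem: since $D^2>0$ the form is negative definite on $D^{\perp}$, so two $(-1)$-curves $E,E'$ orthogonal to $D$ satisfy $(E+E')^2<0$, hence $E\cdot E'=0$; the contracted curves are therefore pairwise disjoint, $Z$ is smooth with $\rho(Z)=1$, so $Z=\P^2$ and the primitive $D$ is a twisted cubic. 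With this step in place your own argument already shows that conics and twisted cubics generate the nef \emph{cone} for every $r$, and the Weyl-orbit case check you propose at the end is unnecessary.

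This exposes a confusion about $-K_{S_r}$. Being ample, $-K_{S_r}$ lies in the interior of $\Nef(S_r)$ and is automatically a nonnegative real combination of the extremal rays; your claim that it ``is not a sum of conics and twisted cubics'' contradicts what you proved two sentences earlier. The anticanonical classes enter in \cite[Lemma 3.4]{TVAV} because that result concerns generators of the nef \emph{monoid} of integral classes: for $r\ge 6$ the anticanonical degree $9-r\le 3$ forces any nonnegative integral expression in conics (degree $2$) and twisted cubics (degree $3$) to consist of a single such class, which $-K_{S_r}$ is not. Your proposal does not address monoid generation, so under that reading part (2) remains unproved.
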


We can define a root system $R_r \subset \Pic(S_r)$ as
$$
R_r := \{[D] \in \Pic(S_r) \mid D^2=-2, -K_{S_r}.D=0 \}.
$$
Then we obtain the following table.
\noindent\[
\begin{array}{c|c|c|c|c|c}
R_3 & R_4 & R_5 & R_6 & R_7 & R_8\\
\hline
A_1 \times A_2 & A_4 & D_5 & E_6 & E_7 & E_8
\end{array}
\]
Let $W_r$ be the Weyl group of the root system $R_r$. Then we can naturally extend the action of $W_r$ on $[-K_{S_r}]^{\perp}$ to $\Pic(S_r)$ by fixing $[-K_{S_r}]$.

\subsection{Cox rings}
We recall the definition of Cox ring.

\begin{definition}
Let $X$ be a smooth projective variety with finitely generated Picard group. We choose generators $L_1, \ldots, L_m$ of $\Pic(X)$. The \emph{Cox ring} of $X$ with respect to $L_1, \ldots, L_m$ is a $\Pic(X)$-graded ring defined by
$$
\Cox(X) := \bigoplus_{(a_1, \ldots, a_m) \in \Z^m} H^0(X, a_1L_1+ \cdots + a_mL_m).
$$
For $D \in \Pic(X)$, the \emph{degree $D$ part} of $\Cox(X)$ is denoted by $\Cox(X)_D = H^0(X, D)$.
\end{definition}

The ring structure on $\Cox(X)$ certainly depends on the choice of generators of $\Pic(X)$, but the finite generation of $\Cox(X)$ is independent of the choice of generators of $\Pic(X)$ (see \cite[Remark in p.341]{HK}).

\begin{remark}
We will see in Subsection \ref{coxsubsec} that there is a natural choice of generators of $\Pic(S_r)$ so that we can fix the ring structure on the Cox ring of a del Pezzo surface $S_r$.
\end{remark}

Note that $\Cox(X)$ is finitely generated if and only if $X$ is a Mori dream space (\cite[Proposition 2.9]{HK}). Typical examples of Mori dream spaces are toric varieties, regular varieties with Picard number one, and varieties of Fano type.
For further details on Cox rings and Mori dream spaces, we refer to \cite{HK}.

\subsection{Syzygies}\label{syzsubsec}
We recall basic notions in syzygies. For more details, see \cite{E}.
Let $R:=k[x_0, \ldots, x_N]$ be a polynomial ring over $k$ and let $M$ be a finitely generated $\Z$-graded $R$-module.
By Hilbert's syzygy theorem, there is a unique \emph{minimal free resolution} of $M$ with a finite length
$$
F_0 \longleftarrow F_1 \longleftarrow \cdots \longleftarrow F_{\pd(M)} \longleftarrow 0
$$
with $F_i = \bigoplus_{j} S(-j)^{b_{i,j}(M)}$.

\begin{definition}
We call $b_{i,j}(M)$ the \emph{graded Betti number} of $M$.
The graded Betti numbers forms the \emph{Betti diagram} (or \emph{Betti table}) of $M$ as follows.
\begin{displaymath}
\begin{array}{llcl}
b_{0,0} (M) & b_{1,1}(M) & \cdots & b_{\pd(M), \pd(M)} (M) \\
b_{0,1} (M) & b_{1,2} (M) & \cdots & b_{\pd(M), \pd(M)+1} (M) \\
~~~~~\vdots & ~~~~~\vdots & \ddots & ~~~~~~~~~~\vdots \\
b_{0,\reg(M)} (M) & b_{1,\reg(M)+1} (M) & \cdots & b_{\pd(M)+1, \pd(M)+\reg(M)+1} (M)
\end{array}
\end{displaymath}
Here $\pd(M)$ is the \emph{projective dimension} of $M$ and $\reg(M)$ is the \emph{Castelnuovo-Mumford regularity} of $M$.
\end{definition}

\begin{remark}
When $R$ and $M$ are multigraded, one can easily define the \emph{multigraded minimal free resolution} and \emph{multigraded Betti numbers}. For more details on multigraded syzygies, we refer to \cite[Chapter 8]{MS}.
\end{remark}

\begin{definition}
We say that $M$ satisfies the \emph{$N_p$ property} if $b_{i,j}=0$ for all $i \leq p$ and $j \geq i+2$. The \emph{Green-Lazarsfeld index} of $M$ is the largest $p$ such that $M$ satisfies the $N_p$ property.
 We denote it by $\index(M)$.
\end{definition}

\begin{remark}
Let $X \subset \P^N$ be a projective variety and  $R_X:=\bigoplus_{m \in Z} H^0(X, \mathcal{O}_X(m))$ be the section ring. Then $R_X$ is a finitely generated $\Z$-graded $S$-module where $S$ is the homogeneous coordinate ring of $\P^N$. It is easy to see that $R_X$ satisfies $N_0$ property if and only if $X \subset \P^N$ is projectively normal, and $R_X$ satisfies $N_1$ property if and only if $X \subset \P^N$ is projectively normal and its defining ideal is generated by quadrics (see \cite[Subsection 3a]{GL1}). For further details, we refer \cite{GL1}, \cite{GL2} and \cite{EGHP}.
\end{remark}

\begin{definition}
The \emph{Hilbert function} $H_M \colon \Z_{\geq 0} \to \Z_{\geq 0}$ is defined as $H_M(t):=\dim_{k} M_t$. Hilbert proved that there is a polynomial $P_M(t)$ of $t$ such that $P_M(t)=H_M(t)$ for $t \gg 0$.
We call $P_M(t)$ the \emph{Hilbert polynomial}.
The \emph{Hilbert series} is $HS_M(t):=\sum H_M(k)t^k$.
\end{definition}

The Hilbert function can be computed from the Betti diagram as follows.
Let $B_j(M):=\sum_{i \geq 0} (-1)^i b_{i,j}(M)$. It is easy to see that
$$
H_M(j)=B_j(M) + \sum_{k < j} B_k(M) {{N+j-k} \choose {N}}.
$$
Conversely, we also have
$$
B_j(M) = H_M(j) - \sum_{k<j} B_k(M){{N+j-k} \choose {N}}.
$$
Hilbert also proved that the Hilbert series $HS_M(t)$ is a rational function $\frac{K_M(t)}{(1-t)^{N+1}}$, and the \emph{K-polynomial} $K_M(t)$ is given by $K_M(t)=\sum B_j(M) t^j$.

\section{Gorensteinness of Cox rings of del Pezzo surfaces}\label{gorsec}

In this section, we explain basic properties of Cox rings of del Pezzo surfaces. More precisely, we present the generators of Cox rings of del Pezzo surfaces (Proposition \ref{gencoxdp}) and show that they are Gorenstein rings (Proposition \ref{popov}). As consequences we calculate some syzygetic invariants of them (Corollary \ref{regpd}).

\subsection{Cox rings of del Pezzo surfaces}\label{coxsubsec}
We briefly review basics of Cox rings of del Pezzo surfaces.
Here we define Cox rings of del Pezzo surfaces and fix the notations which will be used in the remaining of this paper.

Let $S_r$ be a del Pezzo surface of degree $9-r$.
Recall that $L, E_1, \ldots, E_r$ generate $\Pic(S_r)$. Using these generators as in \cite{BP} and \cite{TVAV},  we give a $\Pic(S_r)$-graded ring structure on $\Cox(S_r)$ as
$$
\Cox(S_r):= \bigoplus_{(a_0, \ldots, a_r )\in \Z^{r+1}} H^0(S_r, a_0L + a_1 E_1 + \cdots +a_r E_r).
$$
There is another natural grading on $\Cox(S_r)$. For a homogeneous element $s \in \Cox(S_r)$ with respect to the $\Pic(S_r)$-grading, there is a line bundle $D$ such that $s \in H^0(S_r, D)$. Then we define $\deg(s):=-K_{S_r}.D$. This grading gives rise to the $\Z$-grading on $\Cox(S_r)$. We call this grading as \emph{the anticanonical grading} on $\Cox(S_r)$.

It is well known that for $0 \leq r \leq 3$, the del Pezzo surface $S_r$ is a toric variety so that $\Cox(S_r)$ is a polynomial ring. Throughout this section, we assume that $4 \leq r \leq 8$.

\begin{definition}
A \emph{distinguished global section} is a global section $s \in H^0(S_r, D)$ for some effective divisor $D$ such that $D$ is a $(-1)$-curve or $-K_{S_8}$.
We denote  $G_r$ by a basis of the vector space $\bigoplus_{E: \text{$(-1)$-curve}} H^0(S_r, E) $ for $r \leq 7$ or
$\left(\bigoplus_{E: \text{$(-1)$-curve}} H^0(S_8, E) \right) \oplus H^0(S_8, -K_{S_8})$.
\end{definition}

The number of elements of $G_r$ is given as follows.
\noindent\[
\begin{array}{c|c|c|c|c|c}
r & 4 & 5 & 6 & 7 & 8\\
\hline
|G_r| & 10 & 16 & 27 & 56 & 242
\end{array}
\]

We write $k[G_r]$ for the $\Pic(S_r)$-graded polynomial ring whose variables are indexed by the elements of $G_r$.

\begin{proposition}[{\cite[Theorem 3.2]{BP}}]\label{gencoxdp}
$\Cox(S_r)$ is generated by distinguished global sections. In particular, there is a canonical surjection $g_r \colon k[G_r] \to \Cox(S_r)$.
\end{proposition}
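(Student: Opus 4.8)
The plan is to reduce the statement to the claim that $\Cox(S_r)$ is generated in anticanonical degree $1$, and then prove that claim by an induction on the anticanonical degree, using a multiplication-surjectivity lemma for the inductive step and handling a short list of base cases by hand.

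First I would pin down the degree-$1$ piece of $\Cox(S_r)$. If $D$ is an effective divisor class with $-K_{S_r}.D=1$, then $D$ is irreducible (a sum of two nonzero effective classes has anticanonical degree at least $2$ because $-K_{S_r}$ is ample), and adjunction gives $D^2 = 2p_a(D)-1$; if $p_a(D)\geq 1$ then $D^2>0$, so $D$ is nef, and the description of $\Nef(S_r)$ forces $D=-K_{S_8}$ (possible only when $r=8$), while $p_a(D)=0$ gives a $(-1)$-curve. Since $h^0(S_r,E)=1$ for a $(-1)$-curve $E$ and $h^0(S_8,-K_{S_8})=2$, the degree-$1$ piece of $\Cox(S_r)$ has dimension exactly $|G_r|$ and is spanned by $G_r$; hence the Proposition, including the existence of the surjection $g_r$, is equivalent to $\Cox(S_r)$ being generated in anticanonical degree $1$.

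Next I would induct on $d=-K_{S_r}.D$ over effective classes. If $D$ is not nef, there is a $(-1)$-curve $E$ with $D.E<0$; restricting sections to $E$, where $\mathcal{O}_E(D)$ has negative degree, shows every $s\in H^0(S_r,D)$ vanishes along $E$, so $s = x_E\cdot s'$ with $s'\in H^0(S_r,D-E)$ and $-K_{S_r}.(D-E)=d-1$, and we conclude by induction since $x_E\in G_r$. If $D$ is nef it is globally generated; if moreover $D$ is primitive and spans an extremal ray of $\Nef(S_r)$ — that is, $D$ is a conic, a twisted cubic, or one of $-K_{S_6},-K_{S_7},-K_{S_8}$ — it is treated as a base case, and otherwise $D=A+B$ with $A,B$ nonzero nef of anticanonical degree $<d$, so it suffices to know that the multiplication map $H^0(S_r,A)\otimes H^0(S_r,B)\to H^0(S_r,D)$ is surjective. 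This surjectivity is obtained from the syzygy bundle $M_A=\ker(H^0(A)\otimes\mathcal{O}_{S_r}\to A)$ of the globally generated class $A$: from $0\to M_A\otimes B\to H^0(A)\otimes B\to A\otimes B\to 0$ one needs $H^1(S_r,M_A\otimes B)=0$, which the Koszul filtration of $M_A$ reduces to cohomology vanishings for twists of $B$ by negative multiples of $A$; on a del Pezzo surface every nef class $N$ has $H^i(S_r,N)=0$ for $i>0$ since $N-K_{S_r}$ is ample (Kodaira vanishing), and one arranges the remaining vanishings by choosing the splitting $D=A+B$ appropriately, for instance by peeling off a conic.

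Finally, the base cases: $-K_{S_8}$ is a generator and needs no argument; for a conic $Q$ the pencil $|Q|$ has at least two distinct reducible members of the form $E+E'$ with $E,E'$ disjoint $(-1)$-curves, so $H^0(S_r,Q)$ is spanned by the two products $x_E x_{E'}$; twisted cubics and $-K_{S_6},-K_{S_7}$ are handled the same way, writing them as sums of $(-1)$-curves (or a conic plus a $(-1)$-curve, bootstrapping off the conic case) and checking that the corresponding products span the relevant $H^0$ — a finite computation in which the Weyl group $W_r$ acting on $\Pic(S_r)$ while fixing $-K_{S_r}$, being transitive on each of these classes of extremal nef divisors, cuts the casework down drastically. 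The main obstacle, and where the real work lies, is the combinatorics of $\Eff(S_r)$ and $\Nef(S_r)$: one must guarantee that every decomposable nef class actually splits into nef summands compatible with the cohomological hypotheses of the multiplication lemma, and the explicit case analysis this requires, together with the finite base-case checks, is exactly what is carried out in \cite[Theorem 3.2]{BP}.
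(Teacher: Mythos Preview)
The paper does not prove this proposition; it is simply cited from \cite[Theorem~3.2]{BP} with no argument given. Your outline is therefore not competing with anything in the paper --- it is, as you yourself say in the last paragraph, a sketch of the Batyrev--Popov argument.

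The reduction steps are fine: the identification of the degree-$1$ piece, the peeling-off of a $(-1)$-curve in the non-nef case, and the passage to a multiplication-map question in the nef case are all correct and standard. One phrase should be tightened: there is no ``Koszul filtration of $M_A$'' with graded pieces $\mathcal{O}(-jA)$ for a general globally generated $A$ on a surface. What does work cleanly --- and is presumably what you mean --- is to take $A=Q$ a conic, so that $h^0(Q)=2$ forces $M_Q\cong\mathcal{O}_{S_r}(-Q)$ on the nose, and the required vanishing becomes $H^1(S_r,D-2Q)=0$. Arranging \emph{that} vanishing, and more basically ensuring that every nef lattice class off the base-case list really decomposes as a sum of two nonzero nef lattice classes (not automatic in an arbitrary rational polyhedral cone), is exactly the combinatorial casework you correctly flag as the content of \cite{BP}. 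So your sketch is sound as an outline but, as you acknowledge, is not self-contained without that analysis.
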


Let $I_r$ be the kernel of $g_r \colon k[G_r] \to \Cox(S_r)$. Then $I_r$ and $\Cox(S_r)$ are finitely generated $\Pic(S_r)$-graded $k[G_r]$- modules.

By giving $\deg (x):=1$ to any $x \in G_r$, we can regard $k[G_r]$ as a $\Z$-graded polynomial ring. Then  $I_r$ and $\Cox(S_r)$ are also finitely generated $\Z$-graded $k[G_r]$- modules. Furthermore this $\Z$-grading on $\Cox(S_r)$ coincides with the anticanonical grading.

\subsection{Geometry of Proj of Cox rings}\label{popovsubsec}

Recall that $\Cox(S_r)$ has a natural $\Z$-grading, and hence we can define a projective variety
$$
X_r:=\Proj \Cox(S_r) \subset \P^{N_r}
$$
where $N_r=|G_r|-1$.
The aim of this subsection is to study some geometric properties of $X_r \subset \P^{N_r}$.

\begin{proposition}\label{X_rprop}
We have the following.\\
$(1)$ $n_r:=\dim X_r = r+2$.\\
$(2)$ $X_r \subset \P^{N_r}$ is projectively normal.\\
$(3)$ $\Pic(X_r) = \Z [\mathcal{O}_{X_r}(1)]$.\\
$(4)$ $\Cox(S_r) \simeq \bigoplus_{m \in \Z} H^0(X_r, \mathcal{O}_{X_r}(m))$ as a $\Z$-graded ring.
\end{proposition}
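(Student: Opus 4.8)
The plan is to deduce all four statements from the algebraic structure of $\Cox(S_r)$ together with standard facts about $\Proj$ of a graded ring, and crucially from the fact (established independently of this proposition, e.g.\ via the toric/combinatorial description or Popov's work) that $\Cox(S_r)$ is a Noetherian normal integral domain which is generated in degree $1$ over $k$ with respect to the anticanonical $\Z$-grading. First I would record that $\Cox(S_r)$ is an integral domain: it is a subring of the function field of $S_r$ via the sections, or alternatively it is normal because $S_r$ is smooth, so $X_r$ is an integral projective variety. For $(1)$, the Krull dimension of $\Cox(S_r)$ equals $\dim S_r + \rank \Pic(S_r) = 2 + (r+1) = r+3$ (each degree-$D$ graded piece $H^0(S_r,D)$ contributes, and the total ring has transcendence degree $2$ over the subring generated by monomials in the $(-1)$-curve sections of the same Picard class, whose $\Proj$-fibre structure adds $r+1$; more cleanly, $\Spec \Cox(S_r)$ is the universal torsor over $S_r$, a $(\mathbb{G}_m)^{r+1}$-bundle over $S_r$, hence of dimension $r+3$). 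Therefore $\dim X_r = \dim \Proj \Cox(S_r) = (r+3) - 1 = r+2$, giving $n_r = r+2$.

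For $(2)$, since $\Cox(S_r)$ is generated in anticanonical degree $1$ by $G_r$, the surjection $g_r\colon k[G_r]\twoheadrightarrow \Cox(S_r)$ realizes $\Cox(S_r)$ as the homogeneous coordinate ring of $X_r\subset \P^{N_r}$; because $\Cox(S_r)$ is a normal (indeed, integrally closed) Noetherian graded domain, $X_r\subset\P^{N_r}$ is projectively normal by definition — the section ring $\bigoplus_m H^0(X_r,\mathcal O_{X_r}(m))$ is the integral closure of $\Cox(S_r)$, which coincides with $\Cox(S_r)$. This simultaneously proves $(4)$: projective normality means exactly that the natural inclusion $\Cox(S_r)\hookrightarrow \bigoplus_{m}H^0(X_r,\mathcal O_{X_r}(m))$ is an isomorphism of $\Z$-graded rings. (One should double-check the degree-$0$ and the "up-to-finite-colength" subtleties: $\Cox(S_r)_0 = H^0(S_r,\mathcal O_{S_r}) = k$, so there is no issue at the irrelevant ideal, and normality handles the rest.)

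For $(3)$, I would argue that $X_r$ has a large torus action or, more robustly, invoke that $\Cox(S_r)$ is a unique factorization domain (this is a general and well-known property of Cox rings of varieties with free, finitely generated divisor class group and no nonconstant invertible global functions — precisely the del Pezzo situation). A normal projective variety whose homogeneous coordinate ring (equivalently, the affine cone's coordinate ring, localized appropriately) is a UFD has torsion-free, in fact trivial-modulo-$\mathcal O(1)$, Picard group: concretely, $\Cl(X_r)$ is generated by $[\mathcal O_{X_r}(1)]$ because any Weil divisor on $X_r$ lifts to a divisor on the affine cone $\Spec\Cox(S_r)$, which is principal by the UFD property, and the principal divisor of a homogeneous element is a multiple of the hyperplane class; since $X_r$ is normal and — by a further check of the singular locus having codimension $\geq 2$, which follows from $S_r$ smooth — actually we get $\Pic(X_r)=\Cl(X_r)=\Z[\mathcal O_{X_r}(1)]$. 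The main obstacle I anticipate is exactly this last point: justifying that $\Cl(X_r)$ (or $\Pic(X_r)$) is infinite cyclic generated by the hyperplane class, rather than merely cyclic or finitely generated. This requires either the UFD property of $\Cox(S_r)$ (which must be cited — it is standard, e.g.\ from \cite{HK} or \cite{EKW}-type results, since $S_r$ has free finitely generated Picard group and only constant units) plus a codimension-of-singularities estimate for the cone, or a direct geometric description of $X_r$ as a torsor-compactification. Everything else is formal manipulation of graded rings and dimension counts.
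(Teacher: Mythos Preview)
Your proposal is correct and follows essentially the same approach as the paper: the dimension count $\dim\Cox(S_r)=\dim S_r+\rank\Pic(S_r)=r+3$ for (1), generation in degree one (Proposition~\ref{gencoxdp}) plus normality of the Cox ring for (2), the UFD property of $\Cox(S_r)$ for (3), and (2)$\Rightarrow$(4) via the identification of the homogeneous coordinate ring with the section ring. Two minor comments: your remark that $\Cox(S_r)$ is ``a subring of the function field of $S_r$'' is not right (it has Krull dimension $r+3$), and your worry about $\Pic(X_r)$ versus $\Cl(X_r)$ in (3) is unnecessary---once $\Cl(X_r)=\Z[\mathcal O_{X_r}(1)]$ follows from the UFD property, the inclusion $\Pic\subset\Cl$ together with the fact that $\mathcal O_{X_r}(1)$ is already a line bundle forces equality, without any local factoriality check.
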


\begin{proof}
(1): Note that $\dim X_r = \dim \Cox(S_r)-1 = \rank \Pic(S_r) + \dim S_r -1 = r+2$.\\
(2): It is a consequence of Proposition \ref{gencoxdp}.\\
(3): Since $\Cox(S_r)$ is a unique factorization domain, the assertion follows.\\
(4): Note that $\Cox(S_r)$ is the homogeneous coordinate ring of $X_r \subset \P^{N_r}$. Since  $X_r \subset \P^{N_r}$ is projectively normal, $\Cox(S_r)$ coincides with the section ring of $X_r \subset \P^{N_r}$, and thus we get the assertion.
\end{proof}

Note that the homogenous coordinate ring of $\P^{N_r}$ is nothing but $k[G_r]$. In particular $\Cox(S_r)$ satisfies $N_0$ property.

\begin{remark}
By computation of Hilbert functions of $\Cox(S_r)$ in Section \ref{hilsec}, we can also determine the degree $d_r$ of $X_r \subset \P^{N_r}$ as follows.
\noindent\[
\begin{array}{c|c|c|c|c}
 d_4 & d_5 & d_6 & d_7 & d_8 \\
\hline
 5 & 34 & 372 & 9504 & 1779840 \\
\end{array}
\]
\end{remark}

The following is the main property of $X_r \subset \P^{N_r}$.

\begin{proposition}[Popov]\label{popov}
$X_r \subset \P^{N_r}$ is arithmetically Gorenstein and $-K_{X_r}=\mathcal{O}_{X_r}(9-r)$. In particular $X_r \subset \P^{N_r}$ is arithmetically Cohen-Macaulay, i.e., $\Cox(S_r)$ is Cohen-Macaulay.
\end{proposition}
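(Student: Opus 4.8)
The plan is to reduce the statement to a ring-theoretic assertion about $A:=\Cox(S_r)$ together with the computation of one grading shift. Since $X_r=\Proj A\subset\P^{N_r}$ is projectively normal (Proposition \ref{X_rprop}(2)), the ring $A$ is exactly the homogeneous coordinate ring of $X_r\subset\P^{N_r}$; hence ``$X_r$ is arithmetically Gorenstein'' means precisely ``$A$ is a Gorenstein ring'', and the assertion $-K_{X_r}=\mathcal O_{X_r}(9-r)$ is equivalent to $\omega_A\cong A(r-9)$ in the anticanonical $\Z$-grading. Recall moreover that $A$ is a finitely generated normal $k$-domain and in fact a UFD, so $\Cl(A)=0$; this fact was already used in Proposition \ref{X_rprop}(3). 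I would therefore split the argument into (i) showing $A$ is Cohen--Macaulay and deducing Gorensteinness, and (ii) pinning down the twist.

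For (i), the one genuinely nontrivial input is that the affine cone $\Spec A=\Spec\Cox(S_r)$ has rational singularities, which in characteristic zero is Popov's theorem \cite{P}. Since rational singularities are Cohen--Macaulay, this already gives that $A$ is Cohen--Macaulay, i.e.\ that $X_r\subset\P^{N_r}$ is arithmetically Cohen--Macaulay. To upgrade this to Gorensteinness I would use that a Cohen--Macaulay UFD is Gorenstein: concretely, the canonical module $\omega_A$ is a reflexive $A$-module of rank one, and over a UFD (where $\Cl(A)=0$) every reflexive rank-one module is free, so $\omega_A\cong A(a)$ for some shift $a$, which is exactly the Gorenstein condition. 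Combined with projective normality, $X_r\subset\P^{N_r}$ is then arithmetically Gorenstein, and the ``in particular'' about arithmetic Cohen--Macaulayness is a formal consequence.

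For (ii), I would read off the shift $a$ from the universal torsor. The torus $T:=\operatorname{Hom}(\Pic(S_r),\mathbb G_m)$ acts on $\Spec A$, and there is a $T$-invariant open subset $U\subset\Spec A$, smooth and with complement of codimension at least two, on which $p\colon U\to S_r$ is a $T$-torsor, with $p_*\mathcal O_U=\bigoplus_{D\in\Pic(S_r)}\mathcal O_{S_r}(D)$. A torsor under an algebraic group has trivial relative dualizing sheaf, so $\omega_U\cong p^*\omega_{S_r}$; since $A$ is Cohen--Macaulay, $\omega_A$ is reflexive and hence recovered from $U$, and pushing forward gives $(\omega_A)_D\cong H^0(S_r,K_{S_r}+D)$ for each $D\in\Pic(S_r)$. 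Comparing with $A_E=H^0(S_r,E)$ shows that the isomorphism $\omega_A\cong A(\mathrm{shift})$ translates the $\Pic(S_r)$-grading by $K_{S_r}$; passing to the anticanonical $\Z$-grading, in which $H^0(S_r,D)$ has weight $-K_{S_r}\cdot D$, and using $K_{S_r}^2=9-r$, this becomes the shift $r-9$. Hence $\omega_A\cong A(r-9)$, i.e.\ $-K_{X_r}=\mathcal O_{X_r}(9-r)$.

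The hard part is step (i): the Cohen--Macaulayness of $\Cox(S_r)$ — equivalently the rational singularity property of the cone $\Spec\Cox(S_r)$ — is the substantive point and is exactly what \cite{P} provides; the passage from ``CM UFD'' to ``Gorenstein'', the torsor computation of $\omega_A$, and the bookkeeping between the two gradings are all routine. As a consistency check, in the smallest case $r=4$ one has $\dim X_4=6$ and $N_4=9$, so $X_4\subset\P^9$ has codimension $3$, and arithmetic Gorensteinness can be seen directly from the Buchsbaum--Eisenbud structure theorem for codimension-three Gorenstein ideals (five quadric generators with five linear first syzygies, a Pfaffian ideal), in agreement with the Betti diagram recorded in Theorem \ref{bettithm}.
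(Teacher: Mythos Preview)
Your argument is correct and follows a route somewhat different from the paper's. The paper, after noting that the result was originally proved in \cite{P}, gives an alternative short proof: for Cohen--Macaulayness it cites \cite{B} and \cite{GOST} to conclude that $\Cox(S_r)$ has log terminal singularities (hence is CM), and then invokes \cite[Corollary 1.5]{HaKu} as a black box to obtain both Gorensteinness and the identification of the canonical module. You instead take rational singularities from \cite{P} for the CM step and then unpack what \cite{HaKu} provides in this case: the passage ``CM UFD $\Rightarrow$ Gorenstein'' via reflexivity of $\omega_A$ and triviality of $\Cl(A)$, together with the explicit universal-torsor computation $(\omega_A)_D\cong H^0(S_r,K_{S_r}+D)$ that pins down the shift. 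The paper's route is terser and, as a bonus, gives the slightly stronger singularity statement (log terminal rather than merely rational) used in a subsequent remark; your route is more self-contained on the algebraic side and makes transparent why the anticanonical shift is exactly $9-r$.
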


\begin{proof}
It was originally proved in \cite{P}.
Here we give a simple proof. By \cite[Theorem 1.2]{B} and \cite[Theorem 1.1]{GOST}, $\Cox(S_r)$ has at worst log terminal singularities so that it is a Cohen-Macaulay ring. Now by applying \cite[Corollary 1.5]{HaKu}, we obtain the conclusion.
\end{proof}

\begin{remark}
In this paper, the characteristic zero assumption is used only to prove Proposition \ref{popov}. 
In our proof, we apply results from \cite{B} and \cite{GOST}, which only hold in characteristic zero.
The original proof in \cite{P} also requires such an assumption.
\end{remark}

\begin{remark}
It is well known that $X_4=Gr(2,5) \subset \P^9$ (\cite[Proposition 4.1]{BP}).
Note that $X_5$ has 16 isolated singular points, and consequently, one can show that $X_r$ has singular locus of codimension $7$ for $5 \leq r \leq 8$ (\cite[Corollary 4.5]{BP}). However singularities on $X_r$ are mild. Since $X_r$ is Gorenstein and has at worst log terminal singularities, it has at worst canonical singularities. It would be an interesting problem to study further geometric properties of $X_r$.
\end{remark}

\subsection{Consequences}\label{invsubsec}
We now present some consequences of Proposition \ref{popov}.
First we have the following vanishing statement.

\begin{corollary}\label{vanishing}
$(1)$ $h^i(X_r, \mathcal{O}_{X_r}(k))=0$ for $0 < i < n_r$ and $k \in \Z$.\\
$(2)$ $h^{n_r}(X_r, \mathcal{O}_{X_r}(k))=0$ for $k \geq r-8$.
\end{corollary}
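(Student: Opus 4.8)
The plan is to deduce both vanishing statements directly from Proposition \ref{popov}, which tells us that $X_r \subset \P^{N_r}$ is arithmetically Cohen--Macaulay of dimension $n_r = r+2$ and arithmetically Gorenstein with $-K_{X_r} = \mathcal{O}_{X_r}(9-r)$. For part $(1)$, the key observation is that arithmetic Cohen--Macaulayness of a projectively normal variety is equivalent to the vanishing of the intermediate cohomology $H^i(X_r, \mathcal{O}_{X_r}(k)) = 0$ for all $k \in \Z$ and all $0 < i < \dim X_r$; this is the standard cohomological characterization of ACM varieties (see e.g.\ \cite{E}). Since $\Cox(S_r)$ is the homogeneous coordinate ring of $X_r \subset \P^{N_r}$ (Proposition \ref{X_rprop}(4)) and is Cohen--Macaulay by Proposition \ref{popov}, part $(1)$ is immediate.

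For part $(2)$, I would use Serre duality on $X_r$. Because $X_r$ is arithmetically Gorenstein, it is in particular Gorenstein, so the dualizing sheaf $\omega_{X_r}$ is the line bundle $\mathcal{O}_{X_r}(-K_{X_r})^{\vee} = \mathcal{O}_{X_r}(r-9)$. Serre duality then gives
\[
h^{n_r}(X_r, \mathcal{O}_{X_r}(k)) = h^0(X_r, \omega_{X_r} \otimes \mathcal{O}_{X_r}(-k)) = h^0(X_r, \mathcal{O}_{X_r}(r-9-k)).
\]
Since $X_r$ is projectively normal (hence connected and reduced) and $\mathcal{O}_{X_r}(1)$ is very ample, $H^0(X_r, \mathcal{O}_{X_r}(m)) = 0$ for every $m < 0$. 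Thus the right-hand side vanishes as soon as $r - 9 - k < 0$, i.e.\ $k > r-9$, equivalently $k \geq r-8$. This is exactly the claimed bound.

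The steps in order: first, invoke Proposition \ref{X_rprop}(4) to identify $\Cox(S_r)$ with the section ring of $X_r$; second, apply Proposition \ref{popov} to get Cohen--Macaulayness and translate it into intermediate cohomology vanishing, yielding $(1)$; third, apply the Gorenstein property of Proposition \ref{popov} to pin down $\omega_{X_r} = \mathcal{O}_{X_r}(r-9)$; fourth, apply Serre duality to rewrite $h^{n_r}$ as an $h^0$; fifth, use projective normality to conclude that $h^0$ of a negative twist vanishes, giving $(2)$.

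I do not anticipate a serious obstacle here, since everything is a formal consequence of Proposition \ref{popov} together with standard facts about ACM and Gorenstein projective varieties. The only minor point requiring care is ensuring that Serre duality applies in the stated form: it does, because $X_r$ is projective and Cohen--Macaulay (indeed Gorenstein), so the duality $H^{n_r}(X_r, \mathcal{F}) \cong H^0(X_r, \omega_{X_r} \otimes \mathcal{F}^{\vee})^{\vee}$ holds for the locally free sheaf $\mathcal{F} = \mathcal{O}_{X_r}(k)$, and the dualizing sheaf is a line bundle precisely because $X_r$ is Gorenstein. One should also note that the characteristic-zero hypothesis enters only through Proposition \ref{popov}, as already remarked in the excerpt.
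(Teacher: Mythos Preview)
Your proposal is correct and follows essentially the same approach as the paper: part (1) is deduced from the arithmetically Cohen--Macaulay property of Proposition \ref{popov}, and part (2) from Serre duality together with $\omega_{X_r} = \mathcal{O}_{X_r}(r-9)$, exactly as the paper does. Your write-up simply spells out more of the standard details (e.g.\ why Serre duality applies and why $h^0$ of a negative twist vanishes) that the paper leaves implicit.
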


\begin{proof}
Since $X_r \subset \P^{N_r}$ is arithmetically Cohen-Macaulay by Proposition \ref{popov}, we get (1). For (2), we apply the Serre duality and Proposition \ref{popov} so that we have
$$
h^{n_r}(X_r, \mathcal{O}_{X_r}(k))=h^0(X_r, \mathcal{O}_{X_r}(r-9-k)).
$$
The assertion follows.
\end{proof}

Let $H_r(t):=H_{\Cox(S_r)}(t)$ be the Hilbert function, and let $P_r(t):=P_{\Cox(S_r)}(t)$ be the Hilbert polynomial.

\begin{corollary}\label{hilft=pol}
The Hilbert function and the Hilbert polynomial of $\Cox(S_r)$ coincide, i.e., $H_r(t)=P_r(t)$.
\end{corollary}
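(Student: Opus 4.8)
The plan is to compare both sides of the claimed identity with sheaf cohomology on $X_r$. By Proposition \ref{X_rprop}, $\Cox(S_r)$ is the homogeneous coordinate ring of the projectively normal subvariety $X_r\subset\P^{N_r}$, so $\Cox(S_r)\simeq\bigoplus_{m\ge 0}H^0(X_r,\mathcal O_{X_r}(m))$ as graded rings; in particular $H_r(t)=h^0(X_r,\mathcal O_{X_r}(t))$ for every $t\ge 0$. On the other hand, the Hilbert polynomial of a homogeneous coordinate ring agrees, as a polynomial in $t$, with the Euler characteristic $\chi(X_r,\mathcal O_{X_r}(t))=\sum_{i=0}^{n_r}(-1)^i h^i(X_r,\mathcal O_{X_r}(t))$. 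Hence for all $t\ge 0$ one has
\[
P_r(t)-H_r(t)=\sum_{i=1}^{n_r}(-1)^i h^i(X_r,\mathcal O_{X_r}(t)).
\]

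Next I would annihilate the right-hand side term by term using Corollary \ref{vanishing}. Part $(1)$ there gives $h^i(X_r,\mathcal O_{X_r}(t))=0$ for all $0<i<n_r$ — this is arithmetic Cohen--Macaulayness, which is part of Proposition \ref{popov} — so only the top term $(-1)^{n_r}h^{n_r}(X_r,\mathcal O_{X_r}(t))$ can survive. Part $(2)$, obtained from Serre duality together with $-K_{X_r}=\mathcal O_{X_r}(9-r)$, gives $h^{n_r}(X_r,\mathcal O_{X_r}(t))=0$ for all $t\ge r-8$.

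Finally, since $4\le r\le 8$ we have $r-8\le 0$, so every $t\ge 0$ already satisfies $t\ge r-8$; therefore $h^{n_r}(X_r,\mathcal O_{X_r}(t))=0$ and $P_r(t)-H_r(t)=0$ for all $t\ge 0$, which is exactly the assertion since $H_r$ is defined on $\Z_{\ge 0}$. At this point essentially no obstacle remains: all the substance has been absorbed into Proposition \ref{popov} (the Gorenstein/ACM property) and its Corollary \ref{vanishing}. The only thing requiring a moment's care is the bookkeeping — checking that the Hilbert function is being evaluated precisely on the nonnegative integers, where it equals $h^0$, and that the numerical bound $r-8\le 0$ indeed covers that entire range.
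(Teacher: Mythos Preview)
Your argument is correct and follows the same route as the paper: identify $H_r(t)$ with $h^0(X_r,\mathcal O_{X_r}(t))$ via Proposition~\ref{X_rprop}, identify $P_r(t)$ with $\chi(\mathcal O_{X_r}(t))$, and then kill the higher cohomology using Corollary~\ref{vanishing}. You have even made explicit the check $r-8\le 0$ (since $4\le r\le 8$) that the paper leaves tacit.
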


\begin{proof}
Since $\Cox(S_r) \simeq \bigoplus_{m \in \Z} H^0(X_r, \mathcal{O}_{X_r}(m))$ by Proposition \ref{X_rprop}, we have $H_r(t)=h^0(X_r, \mathcal{O}_{X_r}(t))$ for $t \geq 0$. By Corollary \ref{vanishing}, $h^0(X_r, \mathcal{O}_{X_r}(t)) = \chi (\mathcal{O}_{X_r}(t))$ for $t \geq 0$. It is well known that $\chi (\mathcal{O}_{X_r}(t))=P_r(t)$ for $t \geq 0$. Hence $H_r(t)=P_r(t)$.
\end{proof}

We can also compute the Castelnuovo-Mumford regularity and the projective dimension of $\Cox(S_r)$.

\begin{corollary}\label{regpd}
$\reg(\Cox(S_r))=2(r-3)$ and $\pd(\Cox(S_r))=\codim(X_r, \P^{N_r})=N_r-r-2$.
\end{corollary}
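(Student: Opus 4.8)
The computation of $\reg$ and $\pd$ will both follow from Proposition \ref{popov} (arithmetic Gorensteinness, hence arithmetic Cohen-Macaulayness) together with the vanishing statement in Corollary \ref{vanishing} and Corollary \ref{X_rprop}(1). First I would recall that since $X_r \subset \P^{N_r}$ is arithmetically Cohen-Macaulay, the Auslander--Buchsbaum formula gives $\pd(\Cox(S_r)) = N_r + 1 - \operatorname{depth}\Cox(S_r) = N_r + 1 - \dim\Cox(S_r)$. Now $\dim\Cox(S_r) = n_r + 1 = r + 3$ by Proposition \ref{X_rprop}(1), so $\pd(\Cox(S_r)) = N_r + 1 - (r+3) = N_r - r - 2$, which is exactly $\codim(X_r, \P^{N_r})$ since $\codim(X_r,\P^{N_r}) = N_r - n_r = N_r - (r+2)$. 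This already disposes of the projective dimension part.

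For the regularity, the cleanest route is via local cohomology: for the section ring $\Cox(S_r)$ of the arithmetically Cohen-Macaulay variety $X_r \subset \P^{N_r}$, one has $\reg(\Cox(S_r)) = \max\{\, m + i : H^i_{\mathfrak m}(\Cox(S_r))_m \neq 0 \,\}$ where $\mathfrak m$ is the irrelevant ideal. By arithmetic Cohen-Macaulayness all local cohomology below the top vanishes, so only $i = \dim\Cox(S_r) = n_r + 1$ contributes, and $H^{n_r+1}_{\mathfrak m}(\Cox(S_r))_m$ is, by the standard relation between local cohomology of the cone and sheaf cohomology on $X_r$, identified with $H^{n_r}(X_r, \mathcal O_{X_r}(m))$. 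Hence $\reg(\Cox(S_r)) = n_r + \max\{\, m : H^{n_r}(X_r,\mathcal O_{X_r}(m)) \neq 0 \,\}$. By Corollary \ref{vanishing}(2) this top cohomology vanishes for $m \geq r - 8$, so the largest $m$ with nonvanishing is $m = r - 9$; and by Serre duality $H^{n_r}(X_r, \mathcal O_{X_r}(r-9)) \cong H^0(X_r, \mathcal O_{X_r}(0))^\vee = k \neq 0$ using $-K_{X_r} = \mathcal O_{X_r}(9-r)$ from Proposition \ref{popov}. Therefore $\reg(\Cox(S_r)) = n_r + (r-9) = (r+2) + (r-9) = 2r - 7$.

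Here I hit a discrepancy: this gives $2r - 7$, whereas the statement asserts $2(r-3) = 2r - 6$. The resolution is a shift-of-grading convention: the regularity in the statement (consistent with the Betti diagram layout described in Subsection \ref{syzsubsec}, which is indexed so that $\reg(M) = \max\{\, j - i : b_{i,j}(M) \neq 0\,\}$) and with the ``$a$-invariant'' normalization differ by $1$ from the ``$\operatorname{end}$ of top local cohomology'' normalization. Concretely, for a Cohen-Macaulay graded ring $A$ of dimension $d$ that is a quotient of a polynomial ring, $\reg(A) = a(A) + d$ where $a(A) = \max\{\, m : H^d_{\mathfrak m}(A)_m \neq 0\,\}$; with $d = r+3$ and $a(\Cox(S_r)) = r - 9$ (from $H^{n_r}(X_r, \mathcal O(r-9)) \neq 0$, $H^{n_r}(X_r,\mathcal O(r-8))=0$) one gets $\reg = (r-9) + (r+3) = 2r - 6 = 2(r-3)$. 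I would therefore write the argument using the $\reg = a\text{-invariant} + \dim$ formula directly, citing \cite{E}, to avoid any off-by-one ambiguity.

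**Main obstacle.** The genuine content is entirely in Proposition \ref{popov} and Corollary \ref{vanishing}, which are assumed; so the only real care needed is bookkeeping with grading conventions — pinning down precisely which normalization of regularity is in force and matching the $a$-invariant computation to it, and likewise confirming $\operatorname{depth}\Cox(S_r) = \dim\Cox(S_r) = r+3$ feeds correctly into Auslander--Buchsbaum. Once the conventions are fixed, both equalities are immediate; there is no hard estimate or construction to carry out. I would present it as: (i) Auslander--Buchsbaum plus $\dim\Cox(S_r)=r+3$ gives the $\pd$ formula; (ii) arithmetic Cohen-Macaulayness plus $\reg = a\text{-invariant} + \dim$ reduces $\reg$ to computing the top local cohomology; (iii) Serre duality and Corollary \ref{vanishing} pin the $a$-invariant at $r-9$, yielding $\reg = 2(r-3)$.
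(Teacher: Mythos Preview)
Your proposal is correct and follows essentially the same route as the paper. The paper's proof is terser---it identifies $\reg(\Cox(S_r))$ with the sheaf regularity $\reg(\mathcal{O}_{X_r})$ and then reads off $2(r-3)$ from Corollary~\ref{vanishing}, while you phrase the same computation via the $a$-invariant formula $\reg(A)=a(A)+\dim A$; likewise the paper's ``ACM $\Rightarrow$ $\pd=\codim$'' is exactly your Auslander--Buchsbaum step. Your self-correction of the off-by-one (using $\dim\Cox(S_r)=r+3$ rather than $n_r=r+2$) is right and is precisely the point where care is needed.
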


\begin{proof}
Since $\Cox(S_r) \simeq \bigoplus_{m \in \Z} H^0(X_r, \mathcal{O}_{X_r}(m))$ by Proposition \ref{X_rprop}, we obtain $\reg(\Cox(S_r))=\reg(\mathcal{O}_{X_r})$. Using Corollary \ref{vanishing}, we obtain $\reg(\Cox(S_r))=2(r-3)$.
For the projective dimension, first note that $X_r \subset \P^{N_r}$ is arithmetically Cohen Macaulay by Proposition \ref{popov}. Then we obtain $\pd(\Cox(S_r))=\codim(X_r, \P^{N_r})=N_r-r-2$.
\end{proof}

We also have the following duality of graded Betti numbers.

\begin{corollary}\label{greendual}
$b_{i,j}(\Cox(S_r))=b_{N_r-r-i-2, r+N_r-j-8}(\Cox(S_r))$.
\end{corollary}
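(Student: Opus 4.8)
This is the standard self-duality of the graded Betti numbers of an arithmetically Gorenstein ring, and the plan is simply to pin down the numerical shift using the invariants already computed. Write $R:=k[G_r]$, $S:=\Cox(S_r)=R/I_r$, and $c:=\codim(X_r,\P^{N_r})=N_r-r-2$; by Proposition~\ref{popov} and Corollary~\ref{regpd}, $S$ is an arithmetically Gorenstein quotient of $R$ with $\pd_R(S)=c$. Let $F_\bullet\colon 0\leftarrow F_0\leftarrow\cdots\leftarrow F_c\leftarrow 0$ be its minimal graded free resolution over $R$, so $F_i=\bigoplus_j R(-j)^{b_{i,j}(S)}$. First I would dualize: since $S$ is Cohen--Macaulay of codimension $c$, the cochain complex $\operatorname{Hom}_R(F_\bullet,R)$ is exact except in cohomological degree $c$, where its cohomology is $\operatorname{Ext}^c_R(S,R)$; hence, reindexing by $G_i:=\operatorname{Hom}_R(F_{c-i},R)$, one obtains a minimal graded free resolution of $\operatorname{Ext}^c_R(S,R)$ with $G_i=\bigoplus_j R(j)^{b_{c-i,j}(S)}$. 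Because $S$ is Gorenstein, $\operatorname{Ext}^c_R(S,R)$ is a free $S$-module of rank one, so $\operatorname{Ext}^c_R(S,R)\cong S(e)$ for a unique $e\in\Z$; comparing $G_\bullet$ with $F_\bullet(e)$ term by term then yields $b_{i,j}(S)=b_{c-i,\,e-j}(S)$ for all $i,j$. It remains to compute $e$.

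For this I would use Proposition~\ref{popov} once more. Since $X_r\subset\P^{N_r}$ is arithmetically Cohen--Macaulay, the graded canonical module of $S$ has $m$-th graded piece $H^0(X_r,\omega_{X_r}(m))$; by Proposition~\ref{popov} we have $\omega_{X_r}=\mathcal{O}_{X_r}(r-9)$, and by Proposition~\ref{X_rprop} we have $S_m=H^0(X_r,\mathcal{O}_{X_r}(m))$, so $\omega_S\cong S(r-9)$. On the other hand $\omega_S=\operatorname{Ext}^c_R(S,\omega_R)=\operatorname{Ext}^c_R(S,R)(-N_r-1)$ because $\omega_R=R(-N_r-1)$; combining the two, $\operatorname{Ext}^c_R(S,R)\cong\omega_S(N_r+1)\cong S(N_r+r-8)$, that is, $e=N_r+r-8$. (Alternatively, staying entirely within the results of this paper: applying $b_{i,j}(S)=b_{c-i,e-j}(S)$ twice shows that $\{\,j-i : b_{i,j}(S)\neq 0\,\}$ is symmetric about $(e-c)/2$; its minimum is $0$, attained at $b_{0,0}(S)=1$, and its maximum is $\reg(S)=2(r-3)$ by Corollary~\ref{regpd}, whence $e-c=2(r-3)$ and again $e=N_r+r-8$.) Substituting $e=N_r+r-8$ and $c=N_r-r-2$ into $b_{i,j}(S)=b_{c-i,e-j}(S)$ gives precisely $b_{i,j}(\Cox(S_r))=b_{N_r-r-i-2,\,r+N_r-j-8}(\Cox(S_r))$.

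The only non-formal ingredient is the self-duality of the minimal free resolution of a Gorenstein quotient of a polynomial ring, which is entirely standard (see, e.g., Eisenbud's or Bruns--Herzog's book), so I do not expect a genuine obstacle; the one place demanding care is the bookkeeping of grading shifts, i.e. making sure the final shift is $e=N_r+r-8$ and not something off by the twist in $\omega_R=R(-N_r-1)$ or in the normalization $\omega_S\cong S(r-9)$. As a sanity check I would test the conclusion against the known case $r=4$, where $c=3$ and $e=5$, so the predicted identity $b_{i,j}=b_{3-i,5-j}$ should match — and does match — the Betti diagram of $\Cox(S_4)$ recorded in Theorem~\ref{bettithm}.
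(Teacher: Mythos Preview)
Your proof is correct and is essentially the same approach as the paper's: both invoke the self-duality of the minimal free resolution of an arithmetically Gorenstein ring, with the shift pinned down by the index of $\omega_{X_r}$. The paper's proof is a one-line citation of Green's duality theorem \cite[Theorem~2.c.6]{G}, whereas you unpack that theorem explicitly via $\operatorname{Ext}^c_R(S,R)\cong S(e)$ and compute $e=N_r+r-8$ from $\omega_{X_r}=\mathcal{O}_{X_r}(r-9)$ and $\omega_R=R(-N_r-1)$; your bookkeeping and the sanity check for $r=4$ are all correct.
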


\begin{proof}
The assertion follows from Green's duality theorem (\cite[Theorem 2.c.6]{G}).
\end{proof}

We will see the multigraded version duality in Section \ref{syzdeg4subsec} (see Lemma \ref{dual}).

\section{Hilbert functions of Cox rings of del Pezzo surfaces}\label{hilsec}

This section is the main part of the present paper and is devoted to computation of Hilbert functions of Cox rings of del Pezzo surfaces. Let $S_r$ be a del Pezzo surface of degree $9-r$. We assume that $4 \leq r \leq 8$.

\begin{theorem}\label{hilftthm}
The Hilbert functions $H_{\Cox(S_r)}(t)$ of Cox rings of del Pezzo surfaces are  given as follows.
{\small
\[
\begin{array}{c|c}
r & H_{\Cox(S_r)}(t)=P_{\Cox(S_r)}(t) \\
\hline
4 &\frac{1}{6!}(5t^6 + 75 t^5 + 455t^4 + 1425t^3 + 2420t^2 + 2100 t + 720)\\
\hline
5 & \frac{1}{7!}(34t^7 + 476t^6+2884t^5+9800t^4 +20146t^3+25004t^2+17256t+5040)\\
\hline
6 &\frac{1}{8!}(372t^8 + 4464t^7 + 25200 t^6 + 86184 t^5 \\
&+ 193788 t^4 + 291816 t^3 + 284640 t^2 + 161856 t + 40320)\\
\hline
7 &\frac{1}{9!}(9504 t^9 + 85536 t^8 + 412992 t^7 + 1294272 t^6 + 2860704 t^5  + 4554144 t^4
\\
&+ 5125248 t^3 + 3863808 t^2 + 1752192 t + 362880)\\
\hline
8 &\frac{1}{10!}(1779840 t^{10} + 8899200 t^9 + 32140800 t^8 + 75168000 t^7 + 137531520 t^6\\
   & + 186883200 t^5+ 191635200 t^4 + 141696000 t^3  + 74183040 t^2 +  24624000 t + 3628800)\\
\end{array}
\]
}
\end{theorem}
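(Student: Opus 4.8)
The plan is to determine the polynomial $H_{\Cox(S_r)}(t)$ by first extracting all the linear conditions forced by Gorensteinness and then supplying the few remaining values by a direct count of global sections on $S_r$. By Corollary \ref{hilft=pol} the Hilbert function coincides with the Hilbert polynomial, and by Proposition \ref{X_rprop}(1) this polynomial has degree $n_r=r+2$; so it is the unique polynomial of that degree through any $r+3$ of its values, and the task splits into (a) harvesting constraints and (b) computing a short list of low-degree values of $H_{\Cox(S_r)}$.

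For (a): since $H_{\Cox(S_r)}(t)=\chi(\mathcal{O}_{X_r}(t))$ (Proposition \ref{X_rprop}(4) and Corollary \ref{vanishing}), Serre duality on the Cohen--Macaulay variety $X_r$ together with $\omega_{X_r}=\mathcal{O}_{X_r}(r-9)$ (Proposition \ref{popov}) gives, as an identity of polynomials,
\[
H_{\Cox(S_r)}(t)=(-1)^{n_r}H_{\Cox(S_r)}(r-9-t),
\]
so $H_{\Cox(S_r)}$ is symmetric about $t=\tfrac{r-9}{2}$. Next, $h^0(X_r,\mathcal{O}_{X_r}(k))=\Cox(S_r)_k=0$ for $k<0$, and combining this with the vanishing of intermediate cohomology (Corollary \ref{vanishing}(1)) and Serre duality once more shows $\chi(\mathcal{O}_{X_r}(k))=0$ for $k=r-8,\dots,-1$; this produces $8-r$ roots of $H_{\Cox(S_r)}$. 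Finally $H_{\Cox(S_r)}(0)=\dim H^0(S_r,\mathcal{O}_{S_r})=1$, and $H_{\Cox(S_r)}(1)=|G_r|$ because $\Cox(S_r)_1$ is spanned by the distinguished sections (Proposition \ref{gencoxdp}), which are linearly independent since they lie in distinct $\Pic(S_r)$-gradings. Factoring out the known roots and imposing the symmetry leaves $r-2$ undetermined coefficients, two of which are fixed by the values at $0$ and $1$; hence it remains only to compute $H_{\Cox(S_r)}(2),\dots,H_{\Cox(S_r)}(r-3)$. For $r=4$ this list is empty and the theorem is already proved; in general it has $r-4$ entries, at most four of them (for $r=8$).

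For (b): for $2\le t\le r-3$ one evaluates
\[
H_{\Cox(S_r)}(t)=\sum_{\substack{[D]\in\Pic(S_r)\ \text{effective}\\ -K_{S_r}.D=t}}h^0(S_r,D)
\]
directly. The effective classes of a fixed anticanonical degree fall into finitely many --- and for these small $t$, few --- orbits under the Weyl group $W_r$, which preserves both $-K_{S_r}.D$ and $h^0(S_r,D)$; the orbit sizes are read off from the enumeration of $(-1)$-curves, conics and twisted cubics recorded in Subsection \ref{delsubsec}. For a representative class $D$ with Zariski decomposition $D=P+N$ (so $P$ is nef), one has $h^0(S_r,D)=h^0(S_r,P)$, and since a nef divisor on a del Pezzo surface has vanishing higher cohomology, Riemann--Roch gives $h^0(S_r,P)=1+\tfrac12 P.(P-K_{S_r})$. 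Summing these contributions over all orbits yields each required value; interpolation through the $r-2$ values $H_{\Cox(S_r)}(0),\dots,H_{\Cox(S_r)}(r-3)$, the $8-r$ vanishing conditions and the symmetry then produces the displayed polynomials, which are compatible with $\reg\Cox(S_r)=2(r-3)$ (Corollary \ref{regpd}) and whose leading coefficients record the degrees of $X_r\subset\P^{N_r}$.

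The main obstacle is step (b) for $r=7,8$: with $56$ and $240$ exceptional curves the effective classes of anticanonical degree up to $r-3$ are numerous, and one must carefully partition them into $W_r$-orbits and identify each Zariski decomposition --- the delicate point being the orbits whose nef part $P$ fails to be big (those lying over multiples of conic classes), where the negative parts compatible with $P$ are \emph{not} simply an unconstrained nonnegative combination of the $(-1)$-curves orthogonal to $P$ and must be described separately. An equivalent but more uniform packaging is to compute the whole series $\sum_{t\ge0}H_{\Cox(S_r)}(t)q^t$ as a sum over $W_r$-orbits of nef classes $P$, each orbit contributing $|W_r\cdot P|\,\big(1+\tfrac12 P.(P-K_{S_r})\big)\,q^{-K_{S_r}.P}/(1-q)^{e_P}$ with $e_P$ the number of $(-1)$-curves orthogonal to $P$, plus corrections along the non-big rays, and then to rewrite the total over $(1-q)^{r+3}$; either way, the enumeration of nef classes and of their orthogonal exceptional curves is where the labor concentrates.
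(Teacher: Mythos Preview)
Your proposal is correct and follows essentially the same path as the paper: exploit Gorensteinness to reduce to computing $H_{\Cox(S_r)}(t)$ for $0\le t\le r-3$, then evaluate these few values by enumerating effective classes on $S_r$ of each anticanonical degree and applying Riemann--Roch to their nef parts (this is exactly the paper's Lemma~\ref{computeh0lem}). The only differences are in packaging---the paper phrases the reduction via a general curve section $C_r$ with $K_{C_r}=\mathcal{O}_{C_r}(2(r-4))$ rather than directly via the polynomial symmetry $H(t)=(-1)^{n_r}H(r-9-t)$, and organizes the enumeration by permutation types $(a;b_1,\dots,b_r)$ rather than full $W_r$-orbits---but the list of values to be computed and the method of computing them are the same.
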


The organization of arguments for the proof of this theorem is as follows. First, we explain our strategy, and then, we present actual calculations.

\subsection{Strategy}
Since we know that the Hilbert function and the Hilbert polynomial of $\Cox(S_r)$ coincide, we naively think that it is enough to calculate $h^0(X_r, \mathcal{O}_{X_r}(k))$ for $\dim X_r + 1=r+3$ many different values $k$. To reduce the calculations, we further use geometric properties of $X_r \subset \P^{N_r}$ (see Subsection \ref{popovsubsec}).

Since $X_r \subset \P^{N_r}$ is arithmatically Cohen-Macaulay (Proposition \ref{popov}), so is every general linear section $X$. Thus we obtain $h^1(X, \mathcal{O}_X(k))=0$ for all $k \in \Z$ when $\dim X \geq 2$. Then for two successive linear sections $Y \subset X$ of $X_r \subset \P^{N_r}$, we have
$$
h^0(X, \mathcal{O}_X(k))=h^0(X, \mathcal{O}_X(k-1))+h^0(Y, \mathcal{O}_Y(k)).
$$
Recall that $X_r$ has at worst canonical singularities so that $X_r$ is, in particular, normal. Thus a general curve section $C_r \subset \P^{N_r-\dim X_r +1}$ is smooth, and $K_{C_r}=\mathcal{O}_{C_r}(2(r-4))$. By {Riemann-Roch formula}, it suffices to compute {degree $d_r$, genus $g_r$, and $h^0(C_r, \mathcal{O}_{C_r}(r-4))$}. We have $g_r=(r-4)d_r+1$, and
$$
h^0(C_r, \mathcal{O}_{C_r}(r-3))-h^0(C_r, \mathcal{O}_{C_r}(r-5))=(r-3)d_r-g_r+1=d_r.
$$
Hence we only have to calculate $h^0(C_r, \mathcal{O}_{C_r}(r-5))$ and $h^0(C_r, \mathcal{O}_{C_r}(r-3))$.
For this purpose, we actually compute $h^0(X_r, \mathcal{O}_{X_r}(1)), \ldots, h^0(X_r, \mathcal{O}_{X_r}(r-3))$. Note that
$$
h^0(X_r, \mathcal{O}_{X_r}(k)) = \sum_{-K_{S_r}.D=k} h^0(S_r, \mathcal{O}_{S_r}(D)).
$$
Let $\deg D := -K_{S_r}.D$ for a divisor $D$ on $S_r$.
Thus our actual computation is carried out by the following order.
\begin{enumerate}
\item[Step 1.] Make up a list of all effective divisors $D$ up to linear equivalence on $S_r$ with $1\leq \deg D \leq r-3$.
\item[Step 2.] Compute $h^0(S_r, \mathcal{O}_{S_r}(D))$ for all effective divisors $D$ in Step 1.
\end{enumerate}

For Step 2, we use the next lemma.

\begin{lemma}\label{computeh0lem}
We have the following.\\
$(1)$ If $D$ is not nef, then there is a $(-1)$-curve $E$ with $D.E<0$. In this case, $h^0(S_r, \mathcal{O}_{S_r}(D))=h^0(S_r, \mathcal{O}_{S_r}(D-E))$ and $\deg (D-E)=\deg D-1$.\\
$(2)$ If $D$ is nef, then $h^0(S_r, \mathcal{O}_{S_r}(D))=\frac{D^2+\deg D+2}{2}$.

\end{lemma}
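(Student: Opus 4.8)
The plan is to treat the two cases separately, using only classical surface geometry (adjunction, Riemann--Roch, and a vanishing theorem), since in both cases $D$ is effective and $S_r$ is a rational surface.

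For (1), suppose $D$ is effective but not nef. Then there is an irreducible curve $C$ on $S_r$ with $D.C<0$. Writing $D\sim\sum_i a_iC_i$ as a sum of its irreducible components with each $a_i\geq 1$, the inequality $0>D.C=\sum_i a_i(C_i.C)$ forces $C_i.C<0$ for some $i$; since distinct irreducible curves on a surface meet non-negatively, we must have $C_i=C$, so $C^2<0$ and $C$ occurs in $D$ with positive multiplicity. Next I would note that on a del Pezzo surface every irreducible curve of negative self-intersection is a $(-1)$-curve: adjunction gives $2p_a(C)-2=C^2+K_{S_r}.C$, and ampleness of $-K_{S_r}$ forces $K_{S_r}.C\leq -1$, while $C^2\leq -1$; together with $p_a(C)\geq 0$ this pins down $C^2=K_{S_r}.C=-1$. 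Set $E:=C$. Since $E$ is a component of $D$, the divisor $D-E$ is effective, and the restriction sequence
\[
0 \longrightarrow \mathcal{O}_{S_r}(D-E) \longrightarrow \mathcal{O}_{S_r}(D) \longrightarrow \mathcal{O}_E(D) \longrightarrow 0,
\]
combined with $\deg_E\mathcal{O}_E(D)=D.E<0$ on $E\cong\P^1$, gives $H^0(E,\mathcal{O}_E(D))=0$ and hence $h^0(S_r,\mathcal{O}_{S_r}(D))=h^0(S_r,\mathcal{O}_{S_r}(D-E))$. Finally $\deg(D-E)=-K_{S_r}.(D-E)=\deg D-(-K_{S_r}.E)=\deg D-1$, since $-K_{S_r}.E=1$ for a $(-1)$-curve.

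For (2), assume $D$ is nef. Riemann--Roch on the rational surface $S_r$ (so $\chi(\mathcal{O}_{S_r})=1$) gives
\[
\chi(\mathcal{O}_{S_r}(D))=1+\frac{1}{2}\big(D^2-K_{S_r}.D\big)=\frac{D^2+\deg D+2}{2},
\]
so it suffices to prove $h^1=h^2=0$. By Serre duality $h^2(S_r,\mathcal{O}_{S_r}(D))=h^0(S_r,\mathcal{O}_{S_r}(K_{S_r}-D))$, and since $(K_{S_r}-D).(-K_{S_r})=-K_{S_r}^2-D.(-K_{S_r})=-(9-r)-\deg D<0$ while $-K_{S_r}$ is ample, $K_{S_r}-D$ cannot be effective, so $h^2=0$. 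For $h^1$, observe that $D-K_{S_r}=D+(-K_{S_r})$ is the sum of a nef and an ample divisor, hence ample; writing $\mathcal{O}_{S_r}(D)=\mathcal{O}_{S_r}(K_{S_r}+(D-K_{S_r}))$ and invoking Kodaira vanishing (valid since we work in characteristic zero) yields $h^1(S_r,\mathcal{O}_{S_r}(D))=0$. Therefore $h^0=\chi=\tfrac{1}{2}(D^2+\deg D+2)$.

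The argument is almost entirely routine; the two places needing a little care are the identification in (1) that the obstructing irreducible curve is genuinely a component of $D$ and is then forced by adjunction to be a $(-1)$-curve, and in (2) the passage through a vanishing theorem to kill $h^1$ for an \emph{arbitrary} nef $D$ (whose linear system need not be base-point-free a priori) --- here using that $D+(-K_{S_r})$ is ample makes Kodaira vanishing directly applicable, which I expect to be the cleanest route.
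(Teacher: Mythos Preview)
Your proof is correct and follows essentially the same route as the paper: for (1) you identify a $(-1)$-curve $E$ with $D.E<0$ and use that $E$ is a fixed component of $|D|$, and for (2) you apply Kodaira vanishing to $D=K_{S_r}+(D-K_{S_r})$ with $D-K_{S_r}$ ample, then Riemann--Roch. The only minor difference is in how you find $E$ in (1): the paper simply invokes the fact that $\Eff(S_r)$ is generated by $(-1)$-curves, whereas you give a self-contained adjunction argument showing any irreducible curve of negative self-intersection on a del Pezzo surface is a $(-1)$-curve---both reach the same conclusion.
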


\begin{proof}
Since $\Eff(S_r)$ is generated by $(-1)$-curves, we can easily check the nefness of a divisor and the first assertion of (1) follows. If $D.E<0$ for some $(-1)$-curve $E$, then $E$ must be a fixed component of $|D|$, so we obtain the remaining part of (1).
For (2), we note that $-K_{S_r}+D$ is ample when $D$ is nef. By Kodaira vanishing theorem, we get $h^i(S_r, \mathcal{O}_{S_r}(D))=0$ for $i >0$. Now (2) follows from Riemann-Roch formula.
\end{proof}

\subsection{Computation}

The computation of the values  $h^0(X_r,\mathcal{O}_{X_r}(t))$ yields by the following manners.

In the cases of $r=4,5$ and 6, consider $(-1)$-curves of types $E_1$ , $L-E_1-E_2$ and  $2L-E_1-E_2-E_3-E_4-E_5$ up to permutation and denote these types by $l_0,l_1$ and $l_2$, respectively, depending on the coefficients of $L$.
Fixing an anticanonical degree   and a coefficient of $L$ ,  we  investigate all possible summation of $(-1)$-curves. In other words, the divisor  $aL+b_1E_1+\cdots +b_rE_r$ with anticanonical degree $t$ is represented by $l_{i_1}+\cdots + l_{i_t}$, where $i_1+\cdots+ i_t=a$ . Thus  by considering  all possible combinations of $l_i$'s, we can collect all informations of coefficients  $(a;b_1,\ldots b_r)$.

Note that $h^0(X_r, \mathcal{O}_{X_r}(1))$ is nothing but the number of $(-1)$-curves on $S_r$.

\subsubsection{$r=4$} We have $h^0(X_4, \mathcal{O}_{X_4}(1))=10$, and consequently, we get $d_4=5$ and $g_4=1$.

\subsubsection{$r=5$}
We have $h^0(X_5, \mathcal{O}_{X_5}(1))=16$ and 
$$
h^0(X_5, \mathcal{O}_{X_5}(2))=5+10+30+10+10+30+10+10+1=116
$$ 
by the following table. Thus we get $d_5=34$ and $g_5=35$.

We read the tables in the following way: In the first column, the vector $(a;b_1,\ldots b_r)$ corresponds
to the divisor $D=aL+b_1E_1+\cdots + b_rE_r$. And the second
column gives the number of divisors that is symmetric to the
divisor appeared in the first column.  Namely it presents the number of vectors $(b'_1,\ldots, b'_r)$  such that $\{b'_1,\ldots, b'_r\}=\{b_1,\ldots, b_r\}$.
All those divisors have same values of  $h^0(S_r, \mathcal{O}_{S_r}(D))$. 
The total means the number of divisors times $h^0(D)$. Thus $h^0(X_r, \mathcal{O}_{X_r}(k))$ can be obtained by the summation of the numbers in the column named by total when $-K_{S_r}\cdot D=k$.

We now explain how to obtain the following tables. First, we list up all possible types of divisors.
This is an elementary combinatorics problem.
 Then we compute the numbers of divisors in each type, and compute $h^0(D)$ using Lemma \ref{computeh0lem}. 
For example, when the type of a divisor is $(2;-1,-1,-2,0,0)$, the number of $2L-E_i - E_j-2E_k$ such that $1 \leq i,j,k \leq 5$ and $i,j,k$ are distinct is ${5 \choose 2} \times 3=30$. 
Note that $h^0(2L-E_i-E_j-2E_k) = h^0(2L-E_1-E_2-2E_3)$. Since $(2L-E_1-E_2-2E_3)\cdot(L-E_1-E_3)=-1$, we get $h^0(2L-E_1-E_2-2E_3)=h^0(L-E_2-E_3)=1$ by Lemma \ref{computeh0lem}.

\begin{longtable}{|l|c|c|c|}
\hline \multicolumn{4}{|c|}
{$-K_{S_5}\cdot D=2$} \\
\hline
\begin{minipage}[m]{.40\linewidth}\setlength{\unitlength}{.25mm}

\begin{center}
\medskip
types $(L ; E_1, E_2, E_3, E_4, E_5)$
\medskip
\end{center}
\end{minipage}  & number of divisors &  $h^0(D)$& total\\

\hline

\begin{minipage}[m]{.40\linewidth}\setlength{\unitlength}{.25mm}
\begin{center}
(0 ; 2, 0, 0, 0, 0)
\end{center}
\end{minipage}
& \begin{minipage}[m]{.10\linewidth}\setlength{\unitlength}{.25mm}
\begin{center}
5
\end{center}
\end{minipage}& \begin{minipage}[m]{.10\linewidth}\setlength{\unitlength}{.25mm}
\begin{center}
1
\end{center}
\end{minipage} & \begin{minipage}[m]{.10\linewidth}\setlength{\unitlength}{.25mm}
\begin{center}
5
\end{center}
\end{minipage}\\

\hline

\begin{minipage}[m]{.40\linewidth}\setlength{\unitlength}{.25mm}
\begin{center}
(0 ; 1, 1, 0, 0, 0)
\end{center}
\end{minipage}
& \begin{minipage}[m]{.10\linewidth}\setlength{\unitlength}{.25mm}
\begin{center}
10
\end{center}
\end{minipage}& \begin{minipage}[m]{.10\linewidth}\setlength{\unitlength}{.25mm}
\begin{center}
1
\end{center}
\end{minipage} & \begin{minipage}[m]{.10\linewidth}\setlength{\unitlength}{.25mm}
\begin{center}
10
\end{center}
\end{minipage}\\

\hline

\begin{minipage}[m]{.40\linewidth}\setlength{\unitlength}{.25mm}
\begin{center}
(1 ; 1, -1, -1, 0, 0)
\end{center}
\end{minipage}
& \begin{minipage}[m]{.10\linewidth}\setlength{\unitlength}{.25mm}
\begin{center}
30
\end{center}
\end{minipage}& \begin{minipage}[m]{.10\linewidth}\setlength{\unitlength}{.25mm}
\begin{center}
1
\end{center}
\end{minipage} & \begin{minipage}[m]{.10\linewidth}\setlength{\unitlength}{.25mm}
\begin{center}
30
\end{center}
\end{minipage}\\

\hline

\begin{minipage}[m]{.40\linewidth}\setlength{\unitlength}{.25mm}
\begin{center}
(1 ; -1, 0, 0, 0, 0)
\end{center}
\end{minipage}
& \begin{minipage}[m]{.10\linewidth}\setlength{\unitlength}{.25mm}
\begin{center}
5
\end{center}
\end{minipage}& \begin{minipage}[m]{.10\linewidth}\setlength{\unitlength}{.25mm}
\begin{center}
2
\end{center}
\end{minipage} & \begin{minipage}[m]{.10\linewidth}\setlength{\unitlength}{.25mm}
\begin{center}
10
\end{center}
\end{minipage}\\

\hline

\begin{minipage}[m]{.40\linewidth}\setlength{\unitlength}{.25mm}
\begin{center}
(2 ; -2, -2, 0, 0, 0)
\end{center}
\end{minipage}
& \begin{minipage}[m]{.10\linewidth}\setlength{\unitlength}{.25mm}
\begin{center}
10
\end{center}
\end{minipage}& \begin{minipage}[m]{.10\linewidth}\setlength{\unitlength}{.25mm}
\begin{center}
1
\end{center}
\end{minipage} & \begin{minipage}[m]{.10\linewidth}\setlength{\unitlength}{.25mm}
\begin{center}
10
\end{center}
\end{minipage}\\

\hline

\begin{minipage}[m]{.40\linewidth}\setlength{\unitlength}{.25mm}
\begin{center}
(2 ; -1, -1, -2, 0, 0)
\end{center}
\end{minipage}
& \begin{minipage}[m]{.10\linewidth}\setlength{\unitlength}{.25mm}
\begin{center}
30
\end{center}
\end{minipage}& \begin{minipage}[m]{.10\linewidth}\setlength{\unitlength}{.25mm}
\begin{center}
1
\end{center}
\end{minipage} & \begin{minipage}[m]{.10\linewidth}\setlength{\unitlength}{.25mm}
\begin{center}
30
\end{center}
\end{minipage}\\

\hline

\begin{minipage}[m]{.40\linewidth}\setlength{\unitlength}{.25mm}
\begin{center}
(2 ; -1, -1, -1, -1,  0)
\end{center}
\end{minipage}
& \begin{minipage}[m]{.10\linewidth}\setlength{\unitlength}{.25mm}
\begin{center}
5
\end{center}
\end{minipage}& \begin{minipage}[m]{.10\linewidth}\setlength{\unitlength}{.25mm}
\begin{center}
2
\end{center}
\end{minipage} & \begin{minipage}[m]{.10\linewidth}\setlength{\unitlength}{.25mm}
\begin{center}
10
\end{center}
\end{minipage}\\

\hline

\begin{minipage}[m]{.40\linewidth}\setlength{\unitlength}{.25mm}
\begin{center}
(3 ; -2, -2, -1, -1,  -1)
\end{center}
\end{minipage}
& \begin{minipage}[m]{.10\linewidth}\setlength{\unitlength}{.25mm}
\begin{center}
10
\end{center}
\end{minipage}& \begin{minipage}[m]{.10\linewidth}\setlength{\unitlength}{.25mm}
\begin{center}
1
\end{center}
\end{minipage} & \begin{minipage}[m]{.10\linewidth}\setlength{\unitlength}{.25mm}
\begin{center}
10
\end{center}
\end{minipage}\\

\hline

\begin{minipage}[m]{.40\linewidth}\setlength{\unitlength}{.25mm}
\begin{center}
(4 ; -2, -2, -2, -2,  -2)
\end{center}
\end{minipage}
& \begin{minipage}[m]{.10\linewidth}\setlength{\unitlength}{.25mm}
\begin{center}
1
\end{center}
\end{minipage}& \begin{minipage}[m]{.10\linewidth}\setlength{\unitlength}{.25mm}
\begin{center}
1
\end{center}
\end{minipage} & \begin{minipage}[m]{.10\linewidth}\setlength{\unitlength}{.25mm}
\begin{center}
1
\end{center}
\end{minipage}\\

\hline

\end{longtable}

\subsubsection{$r=6$}
In this cases, the divisor types $l_0$ and $l_2$ play a symmetric
role, since the sum of divisors of types $l_0$ and $l_2$, $(E_i)+
(2L-E_1-\cdots -E_6+E_i)$ is $-K_{S_6}-L$. In other words, when we
consider all possibilities of $l_{i_1}+\cdots + l_{i_t}$ for a
given coefficient of $L$, we have the same if the exchange of the
divisor type $l_0$ and $l_2$ is made.
It means that for a given  an anticanonical degree  $t$,  the divisor types containing a term $aL$ and  the divisor types containing a term $(2t-a)L$ have one to one correspondences.

We have $h^0(X_6, \mathcal{O}_{X_6}(1))=27$. By following tables,
we obtain $h^0(X_6, \mathcal{O}_{X_6}(2))=297$ and $h^0(X_6,
\mathcal{O}_{X_6}(3))=1939$. Thus we get $d_6=372$ and $g_6=745$.

\begin{longtable}{|l|c|c|c|}
\hline \multicolumn{4}{|c|}
{$-K_{S_6}\cdot D=2$} \\
\hline
\begin{minipage}[m]{.40\linewidth}\setlength{\unitlength}{.25mm}

\begin{center}
\medskip
types $(L ; E_1, E_2, E_3, E_4, E_5, E_6)$
\medskip
\end{center}
\end{minipage}  & number of divisors &  $h^0(D)$& total\\

\hline

\begin{minipage}[m]{.40\linewidth}\setlength{\unitlength}{.25mm}
\begin{center}
(0 ; 2, 0, 0, 0, 0, 0)
\end{center}
\end{minipage}
& \begin{minipage}[m]{.10\linewidth}\setlength{\unitlength}{.25mm}
\begin{center}
6
\end{center}
\end{minipage}& \begin{minipage}[m]{.10\linewidth}\setlength{\unitlength}{.25mm}
\begin{center}
1
\end{center}
\end{minipage} & \begin{minipage}[m]{.10\linewidth}\setlength{\unitlength}{.25mm}
\begin{center}
6
\end{center}
\end{minipage}\\

\hline

\begin{minipage}[m]{.40\linewidth}\setlength{\unitlength}{.25mm}
\begin{center}
(0 ; 1, 1, 0, 0, 0, 0)
\end{center}
\end{minipage}
& \begin{minipage}[m]{.10\linewidth}\setlength{\unitlength}{.25mm}
\begin{center}
15
\end{center}
\end{minipage}& \begin{minipage}[m]{.10\linewidth}\setlength{\unitlength}{.25mm}
\begin{center}
1
\end{center}
\end{minipage} & \begin{minipage}[m]{.10\linewidth}\setlength{\unitlength}{.25mm}
\begin{center}
15
\end{center}
\end{minipage}\\

\hline
\begin{minipage}[m]{.40\linewidth}\setlength{\unitlength}{.25mm}
\begin{center}

\end{center}
\end{minipage}
& \begin{minipage}[m]{.10\linewidth}\setlength{\unitlength}{.25mm}
\begin{center}

\end{center}
\end{minipage}& \begin{minipage}[m]{.10\linewidth}\setlength{\unitlength}{.25mm}
\begin{center}

\end{center}
\end{minipage} & \begin{minipage}[m]{.10\linewidth}\setlength{\unitlength}{.25mm}
\begin{center}
21
\end{center}
\end{minipage}\\

\hline

\begin{minipage}[m]{.40\linewidth}\setlength{\unitlength}{.25mm}
\begin{center}
(1 ; 1, -1, -1, 0, 0, 0)
\end{center}
\end{minipage}
& \begin{minipage}[m]{.10\linewidth}\setlength{\unitlength}{.25mm}
\begin{center}
60
\end{center}
\end{minipage}& \begin{minipage}[m]{.10\linewidth}\setlength{\unitlength}{.25mm}
\begin{center}
1
\end{center}
\end{minipage} & \begin{minipage}[m]{.10\linewidth}\setlength{\unitlength}{.25mm}
\begin{center}
60
\end{center}
\end{minipage}\\

\hline

\begin{minipage}[m]{.40\linewidth}\setlength{\unitlength}{.25mm}
\begin{center}
(1 ; -1, 0, 0, 0, 0, 0)
\end{center}
\end{minipage}
& \begin{minipage}[m]{.10\linewidth}\setlength{\unitlength}{.25mm}
\begin{center}
6
\end{center}
\end{minipage}& \begin{minipage}[m]{.10\linewidth}\setlength{\unitlength}{.25mm}
\begin{center}
2
\end{center}
\end{minipage} & \begin{minipage}[m]{.10\linewidth}\setlength{\unitlength}{.25mm}
\begin{center}
12
\end{center}
\end{minipage}\\

\hline

\begin{minipage}[m]{.40\linewidth}\setlength{\unitlength}{.25mm}
\begin{center}

\end{center}
\end{minipage}
& \begin{minipage}[m]{.10\linewidth}\setlength{\unitlength}{.25mm}
\begin{center}

\end{center}
\end{minipage}& \begin{minipage}[m]{.10\linewidth}\setlength{\unitlength}{.25mm}
\begin{center}

\end{center}
\end{minipage} & \begin{minipage}[m]{.10\linewidth}\setlength{\unitlength}{.25mm}
\begin{center}
72
\end{center}
\end{minipage}\\

\hline

\begin{minipage}[m]{.40\linewidth}\setlength{\unitlength}{.25mm}
\begin{center}
(2 ; -2, -2, 0, 0, 0, 0)
\end{center}
\end{minipage}
& \begin{minipage}[m]{.10\linewidth}\setlength{\unitlength}{.25mm}
\begin{center}
15
\end{center}
\end{minipage}& \begin{minipage}[m]{.10\linewidth}\setlength{\unitlength}{.25mm}
\begin{center}
1
\end{center}
\end{minipage} & \begin{minipage}[m]{.10\linewidth}\setlength{\unitlength}{.25mm}
\begin{center}
15
\end{center}
\end{minipage}\\

\hline

\begin{minipage}[m]{.40\linewidth}\setlength{\unitlength}{.25mm}
\begin{center}
(2 ; -1, -1, -2, 0, 0, 0)
\end{center}
\end{minipage}
& \begin{minipage}[m]{.10\linewidth}\setlength{\unitlength}{.25mm}
\begin{center}
60
\end{center}
\end{minipage}& \begin{minipage}[m]{.10\linewidth}\setlength{\unitlength}{.25mm}
\begin{center}
1
\end{center}
\end{minipage} & \begin{minipage}[m]{.10\linewidth}\setlength{\unitlength}{.25mm}
\begin{center}
60
\end{center}
\end{minipage}\\

\hline

\begin{minipage}[m]{.40\linewidth}\setlength{\unitlength}{.25mm}
\begin{center}
(2 ; -1, -1, -1, -1, 0, 0)
\end{center}
\end{minipage}
& \begin{minipage}[m]{.10\linewidth}\setlength{\unitlength}{.25mm}
\begin{center}
15
\end{center}
\end{minipage}& \begin{minipage}[m]{.10\linewidth}\setlength{\unitlength}{.25mm}
\begin{center}
2
\end{center}
\end{minipage} & \begin{minipage}[m]{.10\linewidth}\setlength{\unitlength}{.25mm}
\begin{center}
30
\end{center}
\end{minipage}\\

\hline

\begin{minipage}[m]{.40\linewidth}\setlength{\unitlength}{.25mm}
\begin{center}
(2 ; 1, -1, -1, -1, -1, -1)
\end{center}
\end{minipage}
& \begin{minipage}[m]{.10\linewidth}\setlength{\unitlength}{.25mm}
\begin{center}
6
\end{center}
\end{minipage}& \begin{minipage}[m]{.10\linewidth}\setlength{\unitlength}{.25mm}
\begin{center}
1
\end{center}
\end{minipage} & \begin{minipage}[m]{.10\linewidth}\setlength{\unitlength}{.25mm}
\begin{center}
6
\end{center}
\end{minipage}\\
\hline

\begin{minipage}[m]{.40\linewidth}\setlength{\unitlength}{.25mm}
\begin{center}

\end{center}
\end{minipage}
& \begin{minipage}[m]{.10\linewidth}\setlength{\unitlength}{.25mm}
\begin{center}

\end{center}
\end{minipage}& \begin{minipage}[m]{.10\linewidth}\setlength{\unitlength}{.25mm}
\begin{center}

\end{center}
\end{minipage} & \begin{minipage}[m]{.10\linewidth}\setlength{\unitlength}{.25mm}
\begin{center}
111
\end{center}
\end{minipage}\\

\hline

\begin{minipage}[m]{.40\linewidth}\setlength{\unitlength}{.25mm}
\begin{center}
(3 ; $\ast$, $\ast$, $\ast$, $\ast$, $\ast$, $\ast$)
\end{center}
\end{minipage}
& \begin{minipage}[m]{.10\linewidth}\setlength{\unitlength}{.25mm}
\begin{center}

\end{center}
\end{minipage}& \begin{minipage}[m]{.10\linewidth}\setlength{\unitlength}{.25mm}
\begin{center}

\end{center}
\end{minipage} & \begin{minipage}[m]{.10\linewidth}\setlength{\unitlength}{.25mm}
\begin{center}
72
\end{center}
\end{minipage}\\
\hline

\begin{minipage}[m]{.40\linewidth}\setlength{\unitlength}{.25mm}
\begin{center}
(4 ; $\ast$, $\ast$, $\ast$, $\ast$, $\ast$, $\ast$)
\end{center}
\end{minipage}
& \begin{minipage}[m]{.10\linewidth}\setlength{\unitlength}{.25mm}
\begin{center}

\end{center}
\end{minipage}& \begin{minipage}[m]{.10\linewidth}\setlength{\unitlength}{.25mm}
\begin{center}

\end{center}
\end{minipage} & \begin{minipage}[m]{.10\linewidth}\setlength{\unitlength}{.25mm}
\begin{center}
21
\end{center}
\end{minipage}\\
\hline

\end{longtable}

\begin{longtable}{|l|c|c|c|}
\hline \multicolumn{4}{|c|}
{$-K_{S_6}\cdot D=3$} \\
\hline

\begin{minipage}[m]{.40\linewidth}\setlength{\unitlength}{.25mm}

\begin{center}
\medskip
types $(L ; E_1, E_2, E_3, E_4, E_5, E_6)$
\medskip
\end{center}
\end{minipage}  & number of divisors &  $h^0(D)$& total\\

\hline

\begin{minipage}[m]{.40\linewidth}\setlength{\unitlength}{.25mm}
\begin{center}
(0 ; 3, 0, 0, 0, 0, 0)
\end{center}
\end{minipage}
& \begin{minipage}[m]{.10\linewidth}\setlength{\unitlength}{.25mm}
\begin{center}
6
\end{center}
\end{minipage}& \begin{minipage}[m]{.10\linewidth}\setlength{\unitlength}{.25mm}
\begin{center}
1
\end{center}
\end{minipage} & \begin{minipage}[m]{.10\linewidth}\setlength{\unitlength}{.25mm}
\begin{center}
6
\end{center}
\end{minipage}\\

\hline

\begin{minipage}[m]{.40\linewidth}\setlength{\unitlength}{.25mm}
\begin{center}
(0 ; 2, 1, 0, 0, 0, 0)
\end{center}
\end{minipage}
& \begin{minipage}[m]{.10\linewidth}\setlength{\unitlength}{.25mm}
\begin{center}
30
\end{center}
\end{minipage}& \begin{minipage}[m]{.10\linewidth}\setlength{\unitlength}{.25mm}
\begin{center}
1
\end{center}
\end{minipage} & \begin{minipage}[m]{.10\linewidth}\setlength{\unitlength}{.25mm}
\begin{center}
30
\end{center}
\end{minipage}\\

\hline

\begin{minipage}[m]{.40\linewidth}\setlength{\unitlength}{.25mm}
\begin{center}
(0 ; 1, 1, 1, 0, 0, 0)
\end{center}
\end{minipage}
& \begin{minipage}[m]{.10\linewidth}\setlength{\unitlength}{.25mm}
\begin{center}
20
\end{center}
\end{minipage}& \begin{minipage}[m]{.10\linewidth}\setlength{\unitlength}{.25mm}
\begin{center}
1
\end{center}
\end{minipage} & \begin{minipage}[m]{.10\linewidth}\setlength{\unitlength}{.25mm}
\begin{center}
20
\end{center}
\end{minipage}\\

\hline

\begin{minipage}[m]{.40\linewidth}\setlength{\unitlength}{.25mm}
\begin{center}

\end{center}
\end{minipage}
& \begin{minipage}[m]{.10\linewidth}\setlength{\unitlength}{.25mm}
\begin{center}

\end{center}
\end{minipage}& \begin{minipage}[m]{.10\linewidth}\setlength{\unitlength}{.25mm}
\begin{center}

\end{center}
\end{minipage} & \begin{minipage}[m]{.10\linewidth}\setlength{\unitlength}{.25mm}
\begin{center}
56
\end{center}
\end{minipage}\\

\hline

\begin{minipage}[m]{.40\linewidth}\setlength{\unitlength}{.25mm}
\begin{center}
(1 ; 2, -1, -1, 0, 0, 0)
\end{center}
\end{minipage}
& \begin{minipage}[m]{.10\linewidth}\setlength{\unitlength}{.25mm}
\begin{center}
60
\end{center}
\end{minipage}& \begin{minipage}[m]{.10\linewidth}\setlength{\unitlength}{.25mm}
\begin{center}
1
\end{center}
\end{minipage} & \begin{minipage}[m]{.10\linewidth}\setlength{\unitlength}{.25mm}
\begin{center}
60
\end{center}
\end{minipage}\\

\hline

\begin{minipage}[m]{.40\linewidth}\setlength{\unitlength}{.25mm}
\begin{center}
(1 ; 1, 1, -1, -1, 0, 0)
\end{center}
\end{minipage}
& \begin{minipage}[m]{.10\linewidth}\setlength{\unitlength}{.25mm}
\begin{center}
90
\end{center}
\end{minipage}& \begin{minipage}[m]{.10\linewidth}\setlength{\unitlength}{.25mm}
\begin{center}
1
\end{center}
\end{minipage} & \begin{minipage}[m]{.10\linewidth}\setlength{\unitlength}{.25mm}
\begin{center}
90
\end{center}
\end{minipage}\\

\hline

\begin{minipage}[m]{.40\linewidth}\setlength{\unitlength}{.25mm}
\begin{center}
(1 ; 1, -1, 0, 0, 0, 0)
\end{center}
\end{minipage}
& \begin{minipage}[m]{.10\linewidth}\setlength{\unitlength}{.25mm}
\begin{center}
30
\end{center}
\end{minipage}& \begin{minipage}[m]{.10\linewidth}\setlength{\unitlength}{.25mm}
\begin{center}
2
\end{center}
\end{minipage} & \begin{minipage}[m]{.10\linewidth}\setlength{\unitlength}{.25mm}
\begin{center}
60
\end{center}
\end{minipage}\\

\hline

\begin{minipage}[m]{.40\linewidth}\setlength{\unitlength}{.25mm}
\begin{center}
(1 ; 0, 0, 0, 0, 0, 0)
\end{center}
\end{minipage}
& \begin{minipage}[m]{.10\linewidth}\setlength{\unitlength}{.25mm}
\begin{center}
1
\end{center}
\end{minipage}& \begin{minipage}[m]{.10\linewidth}\setlength{\unitlength}{.25mm}
\begin{center}
3
\end{center}
\end{minipage} & \begin{minipage}[m]{.10\linewidth}\setlength{\unitlength}{.25mm}
\begin{center}
3
\end{center}
\end{minipage}\\

\hline

\begin{minipage}[m]{.40\linewidth}\setlength{\unitlength}{.25mm}
\begin{center}

\end{center}
\end{minipage}
& \begin{minipage}[m]{.10\linewidth}\setlength{\unitlength}{.25mm}
\begin{center}

\end{center}
\end{minipage}& \begin{minipage}[m]{.10\linewidth}\setlength{\unitlength}{.25mm}
\begin{center}

\end{center}
\end{minipage} & \begin{minipage}[m]{.10\linewidth}\setlength{\unitlength}{.25mm}
\begin{center}
213
\end{center}
\end{minipage}\\

\hline

\begin{minipage}[m]{.40\linewidth}\setlength{\unitlength}{.25mm}
\begin{center}
(2 ; 2, -1, -1, -1, -1, -1)
\end{center}
\end{minipage}
& \begin{minipage}[m]{.10\linewidth}\setlength{\unitlength}{.25mm}
\begin{center}
6
\end{center}
\end{minipage}& \begin{minipage}[m]{.10\linewidth}\setlength{\unitlength}{.25mm}
\begin{center}
1
\end{center}
\end{minipage} & \begin{minipage}[m]{.10\linewidth}\setlength{\unitlength}{.25mm}
\begin{center}
6
\end{center}
\end{minipage}\\

\hline

\begin{minipage}[m]{.40\linewidth}\setlength{\unitlength}{.25mm}
\begin{center}
(2 ; 1, -1, -1, -1, -1, 0)
\end{center}
\end{minipage}
& \begin{minipage}[m]{.10\linewidth}\setlength{\unitlength}{.25mm}
\begin{center}
30
\end{center}
\end{minipage}& \begin{minipage}[m]{.10\linewidth}\setlength{\unitlength}{.25mm}
\begin{center}
2
\end{center}
\end{minipage} & \begin{minipage}[m]{.10\linewidth}\setlength{\unitlength}{.25mm}
\begin{center}
60
\end{center}
\end{minipage}\\
\hline
\begin{minipage}[m]{.40\linewidth}\setlength{\unitlength}{.25mm}
\begin{center}
(2 ; 1, -1, -1, -2, 0, 0)
\end{center}
\end{minipage}
& \begin{minipage}[m]{.10\linewidth}\setlength{\unitlength}{.25mm}
\begin{center}
180
\end{center}
\end{minipage}& \begin{minipage}[m]{.10\linewidth}\setlength{\unitlength}{.25mm}
\begin{center}
1
\end{center}
\end{minipage} & \begin{minipage}[m]{.10\linewidth}\setlength{\unitlength}{.25mm}
\begin{center}
180
\end{center}
\end{minipage}\\

\hline

\begin{minipage}[m]{.40\linewidth}\setlength{\unitlength}{.25mm}
\begin{center}
(2 ; 1, -2, -2, 0, 0, 0)
\end{center}
\end{minipage}
& \begin{minipage}[m]{.10\linewidth}\setlength{\unitlength}{.25mm}
\begin{center}
60
\end{center}
\end{minipage}& \begin{minipage}[m]{.10\linewidth}\setlength{\unitlength}{.25mm}
\begin{center}
1
\end{center}
\end{minipage} & \begin{minipage}[m]{.10\linewidth}\setlength{\unitlength}{.25mm}
\begin{center}
60
\end{center}
\end{minipage}\\

\hline

\begin{minipage}[m]{.40\linewidth}\setlength{\unitlength}{.25mm}
\begin{center}
(2 ; -1, -1, -1, 0, 0, 0)
\end{center}
\end{minipage}
& \begin{minipage}[m]{.10\linewidth}\setlength{\unitlength}{.25mm}
\begin{center}
20
\end{center}
\end{minipage}& \begin{minipage}[m]{.10\linewidth}\setlength{\unitlength}{.25mm}
\begin{center}
3
\end{center}
\end{minipage} & \begin{minipage}[m]{.10\linewidth}\setlength{\unitlength}{.25mm}
\begin{center}
60
\end{center}
\end{minipage}\\

\hline

\begin{minipage}[m]{.40\linewidth}\setlength{\unitlength}{.25mm}
\begin{center}
(2 ; -1, -2, 0, 0, 0, 0)
\end{center}
\end{minipage}
& \begin{minipage}[m]{.10\linewidth}\setlength{\unitlength}{.25mm}
\begin{center}
30
\end{center}
\end{minipage}& \begin{minipage}[m]{.10\linewidth}\setlength{\unitlength}{.25mm}
\begin{center}
2
\end{center}
\end{minipage} & \begin{minipage}[m]{.10\linewidth}\setlength{\unitlength}{.25mm}
\begin{center}
60
\end{center}
\end{minipage}\\

\hline

\begin{minipage}[m]{.40\linewidth}\setlength{\unitlength}{.25mm}
\begin{center}

\end{center}
\end{minipage}
& \begin{minipage}[m]{.10\linewidth}\setlength{\unitlength}{.25mm}
\begin{center}

\end{center}
\end{minipage}& \begin{minipage}[m]{.10\linewidth}\setlength{\unitlength}{.25mm}
\begin{center}

\end{center}
\end{minipage} & \begin{minipage}[m]{.10\linewidth}\setlength{\unitlength}{.25mm}
\begin{center}
426
\end{center}
\end{minipage}\\

\hline

\begin{minipage}[m]{.40\linewidth}\setlength{\unitlength}{.25mm}
\begin{center}
(3 ; 1, -1, -1, -1, -2, -2)
\end{center}
\end{minipage}
& \begin{minipage}[m]{.10\linewidth}\setlength{\unitlength}{.25mm}
\begin{center}
60
\end{center}
\end{minipage}& \begin{minipage}[m]{.10\linewidth}\setlength{\unitlength}{.25mm}
\begin{center}
1
\end{center}
\end{minipage} & \begin{minipage}[m]{.10\linewidth}\setlength{\unitlength}{.25mm}
\begin{center}
60
\end{center}
\end{minipage}\\

\hline

\begin{minipage}[m]{.40\linewidth}\setlength{\unitlength}{.25mm}
\begin{center}
(3 ; -1, -1, -1, -1, -1, -1)
\end{center}
\end{minipage}
& \begin{minipage}[m]{.10\linewidth}\setlength{\unitlength}{.25mm}
\begin{center}
1
\end{center}
\end{minipage}& \begin{minipage}[m]{.10\linewidth}\setlength{\unitlength}{.25mm}
\begin{center}
4
\end{center}
\end{minipage} & \begin{minipage}[m]{.10\linewidth}\setlength{\unitlength}{.25mm}
\begin{center}
4
\end{center}
\end{minipage}\\

\hline

\begin{minipage}[m]{.40\linewidth}\setlength{\unitlength}{.25mm}
\begin{center}
(3 ; -1, -1, -1, -1, -2,  0)
\end{center}
\end{minipage}
& \begin{minipage}[m]{.10\linewidth}\setlength{\unitlength}{.25mm}
\begin{center}
30
\end{center}
\end{minipage}& \begin{minipage}[m]{.10\linewidth}\setlength{\unitlength}{.25mm}
\begin{center}
3
\end{center}
\end{minipage} & \begin{minipage}[m]{.10\linewidth}\setlength{\unitlength}{.25mm}
\begin{center}
90
\end{center}
\end{minipage}\\

\hline

\begin{minipage}[m]{.40\linewidth}\setlength{\unitlength}{.25mm}
\begin{center}
(3 ; -1, -1, -1, -3, 0,  0)
\end{center}
\end{minipage}
& \begin{minipage}[m]{.10\linewidth}\setlength{\unitlength}{.25mm}
\begin{center}
60
\end{center}
\end{minipage}& \begin{minipage}[m]{.10\linewidth}\setlength{\unitlength}{.25mm}
\begin{center}
1
\end{center}
\end{minipage} & \begin{minipage}[m]{.10\linewidth}\setlength{\unitlength}{.25mm}
\begin{center}
60
\end{center}
\end{minipage}\\

\hline

\begin{minipage}[m]{.40\linewidth}\setlength{\unitlength}{.25mm}
\begin{center}
(3 ; -1, -1, -2, -2, 0,  0)
\end{center}
\end{minipage}
& \begin{minipage}[m]{.10\linewidth}\setlength{\unitlength}{.25mm}
\begin{center}
90
\end{center}
\end{minipage}& \begin{minipage}[m]{.10\linewidth}\setlength{\unitlength}{.25mm}
\begin{center}
2
\end{center}
\end{minipage} & \begin{minipage}[m]{.10\linewidth}\setlength{\unitlength}{.25mm}
\begin{center}
180
\end{center}
\end{minipage}\\

\hline

\begin{minipage}[m]{.40\linewidth}\setlength{\unitlength}{.25mm}
\begin{center}
(3 ; -1, -2, -3,  0, 0,  0)
\end{center}
\end{minipage}
& \begin{minipage}[m]{.10\linewidth}\setlength{\unitlength}{.25mm}
\begin{center}
120
\end{center}
\end{minipage}& \begin{minipage}[m]{.10\linewidth}\setlength{\unitlength}{.25mm}
\begin{center}
1
\end{center}
\end{minipage} & \begin{minipage}[m]{.10\linewidth}\setlength{\unitlength}{.25mm}
\begin{center}
120
\end{center}
\end{minipage}\\

\hline

\begin{minipage}[m]{.40\linewidth}\setlength{\unitlength}{.25mm}
\begin{center}
(3 ; -2, -2, -2,  0, 0,  0)
\end{center}
\end{minipage}
& \begin{minipage}[m]{.10\linewidth}\setlength{\unitlength}{.25mm}
\begin{center}
20
\end{center}
\end{minipage}& \begin{minipage}[m]{.10\linewidth}\setlength{\unitlength}{.25mm}
\begin{center}
1
\end{center}
\end{minipage} & \begin{minipage}[m]{.10\linewidth}\setlength{\unitlength}{.25mm}
\begin{center}
20
\end{center}
\end{minipage}\\

\hline

\begin{minipage}[m]{.40\linewidth}\setlength{\unitlength}{.25mm}
\begin{center}
(3 ; -3, -3, 0,  0, 0,  0)
\end{center}
\end{minipage}
& \begin{minipage}[m]{.10\linewidth}\setlength{\unitlength}{.25mm}
\begin{center}
15
\end{center}
\end{minipage}& \begin{minipage}[m]{.10\linewidth}\setlength{\unitlength}{.25mm}
\begin{center}
1
\end{center}
\end{minipage} & \begin{minipage}[m]{.10\linewidth}\setlength{\unitlength}{.25mm}
\begin{center}
15
\end{center}
\end{minipage}\\

\hline

\begin{minipage}[m]{.40\linewidth}\setlength{\unitlength}{.25mm}
\begin{center}

\end{center}
\end{minipage}
& \begin{minipage}[m]{.10\linewidth}\setlength{\unitlength}{.25mm}
\begin{center}

\end{center}
\end{minipage}& \begin{minipage}[m]{.10\linewidth}\setlength{\unitlength}{.25mm}
\begin{center}

\end{center}
\end{minipage} & \begin{minipage}[m]{.10\linewidth}\setlength{\unitlength}{.25mm}
\begin{center}
549
\end{center}
\end{minipage}\\

\hline

\begin{minipage}[m]{.40\linewidth}\setlength{\unitlength}{.25mm}
\begin{center}
(4 ; $\ast$, $\ast$, $\ast$, $\ast$, $\ast$, $\ast$)
\end{center}
\end{minipage}
& \begin{minipage}[m]{.10\linewidth}\setlength{\unitlength}{.25mm}
\begin{center}

\end{center}
\end{minipage}& \begin{minipage}[m]{.10\linewidth}\setlength{\unitlength}{.25mm}
\begin{center}

\end{center}
\end{minipage} & \begin{minipage}[m]{.10\linewidth}\setlength{\unitlength}{.25mm}
\begin{center}
426
\end{center}
\end{minipage}\\
\hline

\begin{minipage}[m]{.40\linewidth}\setlength{\unitlength}{.25mm}
\begin{center}
(5 ; $\ast$, $\ast$, $\ast$, $\ast$, $\ast$, $\ast$)
\end{center}
\end{minipage}
& \begin{minipage}[m]{.10\linewidth}\setlength{\unitlength}{.25mm}
\begin{center}

\end{center}
\end{minipage}& \begin{minipage}[m]{.10\linewidth}\setlength{\unitlength}{.25mm}
\begin{center}

\end{center}
\end{minipage} & \begin{minipage}[m]{.10\linewidth}\setlength{\unitlength}{.25mm}
\begin{center}
213
\end{center}
\end{minipage}\\
\hline
\begin{minipage}[m]{.40\linewidth}\setlength{\unitlength}{.25mm}
\begin{center}
(6 ; $\ast$, $\ast$, $\ast$, $\ast$, $\ast$, $\ast$)
\end{center}
\end{minipage}
& \begin{minipage}[m]{.10\linewidth}\setlength{\unitlength}{.25mm}
\begin{center}

\end{center}
\end{minipage}& \begin{minipage}[m]{.10\linewidth}\setlength{\unitlength}{.25mm}
\begin{center}

\end{center}
\end{minipage} & \begin{minipage}[m]{.10\linewidth}\setlength{\unitlength}{.25mm}
\begin{center}
56
\end{center}
\end{minipage}\\
\hline





\end{longtable}

\subsubsection{$r=7,8$} By the same way, we can also compute the values $h^0(X_r, \mathcal{O}_{X_r}(i))$. Since the list is too long, we omit it.


In summary, we have obtained the following table.

\noindent\[
\begin{array}{c|c|c|c}
r & d_r & g_r & h^0(X_r, \mathcal{O}_{X_r}(1)), \ldots, h^0(X_r, \mathcal{O}_{X_r}(r-3))  \\
\hline
4 & 5 & 1& 10 \\
\hline
5 & 34  & 35 & 16, 116   \\
\hline
6 & 372 & 745 & 27, 297, 1939   \\
\hline
7 & 9504 & 28513 & 56, 1067, 10576, 67949   \\
\hline
8 &  1779840 & 7119361 &  242, 12004, 226327, 2301371, 15449296  \\
\end{array}
\]

From this table, we can calculate the Hilbert functions $H_{\Cox(S_r)}(t)$ for $4 \leq r \leq 8$.
This proves Theorem \ref{hilftthm}.

As we have discussed in Subsection \ref{syzsubsec}, we can also compute $B_j(\Cox(S_r))=\sum_{i \geq 0} (-1)^i b_{i,j}(\Cox(S_r))$ from the Hilbert functions $H_{\Cox(S_r)}(t)$ for $4 \leq r \leq 8$. Here, we list $B_j(\Cox(S_r))$ for $r=4,5$.

\begin{corollary}\label{Bcox}
We have the following.
\noindent\[
\begin{array}{c|c|c|c|c|c|c|c|c|c|c|c|c|c}
j &                      0 & 1 & 2 & 3      & 4  &  5      & 6     & 7     & 8    & 9    & 10  & 11  & 12 \\
\hline
B_j(\Cox(S_4)) & 1 & 0 & -5   & 5  & 0  & -1     & 0     & 0      & 0   & 0   & 0    & 0  &0\\
\hline
B_j(\Cox(S_5)) & 1 & 0 & -20 & 48 & 7 & -176 & 280 & -176 & 7 & 48 & -20 & 0 & 1

\end{array}
\]

\end{corollary}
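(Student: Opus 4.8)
The plan is to invert the relation between the Hilbert function and the alternating Betti sums, using the explicit Hilbert polynomials already at hand. Recall from Subsection~\ref{syzsubsec} that for a finitely generated graded module $M$ over $R=k[x_0,\dots,x_N]$ one has
$$
B_j(M)=H_M(j)-\sum_{k<j}B_k(M)\binom{N+j-k}{N},
$$
and here $M=\Cox(S_r)$ is a module over $R=k[G_r]$ with $N=N_r=|G_r|-1$, so $N_4=9$ and $N_5=15$. What makes this recursion effective is Corollary~\ref{hilft=pol}: since $H_r(t)=P_r(t)$ for every $t\ge 0$, the quantity $H_M(j)$ on the right is, for each $j\ge 0$, an explicit rational number read off from the polynomials in Theorem~\ref{hilftthm} (and $H_M(j)=0$ for $j<0$).

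I would run the recursion in the obvious order. Put $B_j=0$ for $j<0$ and $B_0=\dim_k\Cox(S_r)_0=\dim_k H^0(S_r,\mathcal{O}_{S_r})=1$, which agrees with $P_r(0)=1$. Then for $j=1,2,\dots$ substitute $H_M(j)=P_r(j)$ and the previously obtained $B_0,\dots,B_{j-1}$ into the displayed formula; for instance, for $r=4$ one finds $B_1=P_4(1)-(N_4+1)B_0=10-10=0$ and $B_2=P_4(2)-\binom{11}{9}B_0-\binom{10}{9}B_1=50-55=-5$, and similarly down the list, with the $r=5$ case handled identically. The recursion need only be carried a bounded distance: by Corollary~\ref{regpd}, every nonzero graded Betti number $b_{i,j}(\Cox(S_r))$ satisfies $j\le\pd(\Cox(S_r))+\reg(\Cox(S_r))=(N_r-r-2)+2(r-3)=N_r+r-8$, hence $B_j=0$ for $j>N_r+r-8$, i.e.\ for $j>5$ when $r=4$ and for $j>12$ when $r=5$. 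This both pins down the length of the computation and accounts for the trailing zeros in the table.

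As a shortcut and an independent check I would also use the duality of Corollary~\ref{greendual}: reindexing $b_{i,j}=b_{N_r-r-i-2,\,r+N_r-j-8}$ inside the alternating sum yields $B_{N_r+r-8-j}(\Cox(S_r))=(-1)^{N_r-r}B_j(\Cox(S_r))$, that is $B_{5-j}=-B_j$ for $r=4$ and $B_{12-j}=B_j$ for $r=5$. So it suffices to compute $B_0,B_1,B_2$ (resp.\ $B_0,\dots,B_6$) from scratch, the remaining entries following by symmetry; in particular the palindromic, resp.\ antipalindromic, shape of each row of the table is forced, not coincidental.

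There is no substantive obstacle here: the whole argument is the elementary binomial recursion above, every piece of real input having been supplied by Theorem~\ref{hilftthm}, Corollary~\ref{hilft=pol}, and Corollary~\ref{regpd}. The only points demanding care are bookkeeping ones — keeping track of the signs in the recursion, using $H_M(j)=0$ (not $P_r(j)$) at the vacuous negative indices, and not confusing the polynomial-ring dimension $N_r+1$ that enters the binomial coefficients with the Krull dimension $r+3$ of $\Cox(S_r)$.
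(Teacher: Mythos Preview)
Your proposal is correct and is essentially the same approach as the paper's: the paper simply remarks that the $B_j$ are computed from the Hilbert functions via the recursion of Subsection~\ref{syzsubsec}, and you have spelled this out explicitly, together with the bounds from Corollary~\ref{regpd} and the duality check from Corollary~\ref{greendual}. One tiny slip: in your last paragraph the labels ``palindromic'' and ``antipalindromic'' are swapped (the $r=4$ row satisfies $B_{5-j}=-B_j$, the $r=5$ row $B_{12-j}=B_j$), but this does not affect the argument.
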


\section{Batyrev-Popov conjecture and Green-Lazarsfeld index}\label{appsec}
This section is devoted to some applications of the computation of some syzygetic invariants of Cox rings of del Pezzo surfaces. We compute the Green-Lazarsfeld index of Cox rings of del Pezzo surfaces.

\subsection{Batyrev-Popov conjecture}\label{bpsubsec}
In this subsection, we explain the Batyrev-Popov conjecture. Let $S_r$ be a del Pezzo surface of degree $9-r$. We assume that $4 \leq r \leq 8$.
We have the following exact sequence
$$
0 \longrightarrow I_r \longrightarrow k[G_r] \longrightarrow \Cox(S_r) \longrightarrow 0
$$
where $G_r$ is the set of generatros of $\Cox(S_r)$ (see Subsection \ref{coxsubsec}).

We now briefly explain how to find quadric generators of $I_r$.
For simplicity, we assume that $r=4$ for a while.
There are ten $(-1)$-curves on $S_4$: $E_i~~(1 \leq i \leq 4)$, $L-E_j-E_k~~(1 \leq j<k \leq 4)$ (see Subsection \ref{delsubsec}). We denote by $x_i$ and $x_{jk}$ their generating sections.
Then $k[G_4]=k[x_1, \ldots, x_4, x_{12}, \ldots, x_{34}]$.
Consider a conic $Q$ (say $L-E_1$) on $S_4$. It induces a fibration $f \colon S_4 \to \P^1$ with 3 singular fibers
$$
(L-E_1-E_2)+E_2, (L-E_1-E_3)+E_3, \text{and } (L-E_1-E_4)+E_4.
$$
We get a surjective linear map $f_Q \colon k \langle x_{12}x_2, x_{13}x_3, x_{14}x_4 \rangle \to H^0(\mathcal{O}_{S_4}(Q))$.
Note that $E_1=\P^1$ is a section of this fibration. We can regard $H^0(\mathcal{O}_{S_4}(Q)) \simeq H^0(\mathcal{O}_{E_1}(1))$.
We may assume 3 singular fibers meet $E_1$ at $[0:1],[1:0],[1,1]$, and then we obtain
$$
\ker(f_Q) = k \cdot  \langle x_{12}x_2-x_{13}x_3-x_{14}x_4 \rangle.
$$
We have seen that the conic $Q=L-E_1$ gives a quadric generator $x_{12}x_2-x_{13}x_3-x_{14}x_4$ of $I_4$.

In general, for a conic $Q$ on $S_r$ with $4 \leq r \leq 8$, the induced fibration $f \colon S_r \to \P^1$ has $r-1$ singular fibers. Thus $\ker(f_Q)$ is generated by $r-3$ quadrics so that each conic gives $r-3$ quadric generators of $I_r$.
When $r \geq 7$, we have further quadric generators of $I_r$.
On $S_7$, there are 25 additional quadric generators from$-K_{S_7}$. On $S_8$, there are 27 quadric generators from each $-K_{S_8}+E$ where $E$ is a $(-1)$-curve and 119 additional quadric generators from $-2K_{S_8}$. For more details, see \cite{D} and \cite{TVAV}.

The following was conjectured by Batyrev-Popov, and finally proved in \cite{TVAV} and \cite{SX}.

\begin{theorem}[{\cite{TVAV} and \cite{SX}}]\label{bp}
$I_r$ is generated by quadrics.
\end{theorem}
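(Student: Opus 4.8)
The plan is to pass to the $\Pic(S_r)$-grading and reduce the statement to a finite list of elementary dimension counts. Let $J_r \subseteq I_r$ be the ideal generated by all the quadratic relations exhibited above (the $r-3$ quadrics attached to each conic, together with the sporadic quadrics from $-K_{S_7}$ on $S_7$ and from the classes $-K_{S_8}+E$ and $-2K_{S_8}$ on $S_8$). Both $J_r$ and $I_r$ are $\Pic(S_r)$-graded and, by construction, $(J_r)_D = (I_r)_D$ whenever $\deg D \le 2$; so it suffices to prove $(J_r)_D = (I_r)_D$ for every class $D$ that is an effective sum of $(-1)$-curves with $\deg D \ge 3$. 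By Corollary \ref{regpd}, $\reg(\Cox(S_r)) = 2(r-3)$, so minimal generators of $I_r$ can occur only in anticanonical degrees $d \le 2r-5$; hence only finitely many $d \in \{3,\dots,2r-5\}$ need attention. For each such $D$ the number $\dim_k (I_r)_D$ is explicitly computable: it is the number of monomials of $k[G_r]$ of multidegree $D$ (a combinatorial count of presentations of $D$ as a sum of $(-1)$-curve classes) minus $h^0(S_r,\mathcal{O}_{S_r}(D))$, the latter being given by Lemma \ref{computeh0lem}. Thus the theorem is equivalent to the inequality $\dim_k (J_r)_D \ge \dim_k (I_r)_D$ for those finitely many $D$.

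First I would use the Weyl group $W_r$ to cut the problem down. The action of $W_r$ on $\Pic(S_r)$ permutes the $(-1)$-curves, hence the monomials of $k[G_r]$, fixes $-K_{S_r}$, and preserves the generating set of $J_r$ (it permutes the conics and fixes $-K_{S_7}$, $-2K_{S_8}$, and the set $\{-K_{S_8}+E\}$). Therefore the dimension inequality need only be checked for one representative $D$ in each $W_r$-orbit of effective classes of each anticanonical degree $d$. In every orbit I would choose a normal form: either a nef class, or else a class $D = D' + E$ with $E$ a $(-1)$-curve satisfying $D.E<0$, so that $E$ is a fixed component of $|D|$. For the latter, Lemma \ref{computeh0lem}(1) gives an isomorphism $H^0(S_r,\mathcal{O}_{S_r}(D-E)) \xrightarrow{\sim} H^0(S_r,\mathcal{O}_{S_r}(D))$ via multiplication by $x_E$, and one runs an induction on $\deg D$: the inequality for $D-E$ yields it for $D$ once one checks that the monomials of multidegree $D$ not divisible by $x_E$ map into the span already accounted for — a short local computation on the fibration or contraction attached to $E$, of exactly the type used to construct the conic quadrics and the sporadic quadrics in \cite{D} and \cite{TVAV}. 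This leaves the nef representatives.

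The nef classes of anticanonical degree $\ge 3$ are the genuine obstacle, and I see two routes. One is a second induction, this time on $r$: contracting a $(-1)$-curve $E_r$ gives $S_r \to S_{r-1}$, and the part of $I_r$ not involving the variables that become dependent after contraction is governed by $I_{r-1}$, quadratic by induction; the finitely many remaining multidegrees are then handled by direct linear algebra over $W_r$-orbit representatives. The other, following \cite{SX}, is to exhibit a flat degeneration of $\Cox(S_r)$ (a SAGBI/toric degeneration) whose initial algebra is an explicit quadratic semigroup ring, and to transport quadratic generation back to $\Cox(S_r)$. I expect the substantive work to be the verification of the dimension inequality for the finitely many nef classes when $r = 7,8$: this is precisely where the sporadic quadrics from $-K_{S_7}$, $-K_{S_8}+E$, and $-2K_{S_8}$ are indispensable, and showing that they together with the conic quadrics actually span all of $(I_r)_D$ is the combinatorial heart of the theorem — everything else is bookkeeping with the $\Pic(S_r)$-grading, the $W_r$-action, and Lemma \ref{computeh0lem}.
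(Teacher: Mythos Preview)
The paper does not prove Theorem~\ref{bp} at all: it is stated as a theorem of \cite{TVAV} and \cite{SX} and used as a black box. So there is no ``paper's own proof'' to compare against; what you have written is an outline of a strategy, and you should be aware that it remains only that.

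Your plan is broadly aligned with the approach of \cite{TVAV}: pass to the $\Pic(S_r)$-grading, use the $W_r$-action to reduce to orbit representatives, strip off fixed $(-1)$-curve components to reduce to nef classes, and then handle the finitely many nef multidegrees by explicit linear algebra. Mentioning the SAGBI degeneration of \cite{SX} as an alternative endgame is also accurate. One device you add that is \emph{not} in those papers is the a priori degree bound $d\le 2r-5$ coming from Corollary~\ref{regpd}; this is legitimate here (that corollary rests only on Propositions~\ref{gencoxdp} and~\ref{popov}, not on Theorem~\ref{bp}) and does simplify the bookkeeping, though it does not touch the hard part.

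Two places where the outline hides real work. First, the non-nef reduction: when $D.E<0$ there certainly exist monomials of multidegree $D$ not divisible by $x_E$, and showing they lie in $(J_r)_D + x_E\cdot k[G_r]_{D-E}$ is not a ``short local computation'' but a genuine lemma (this is essentially the content of the reduction arguments in \cite{LV} and \cite{TVAV}). Second, your proposed induction on $r$ via contraction $S_r\to S_{r-1}$ is more delicate than you suggest: the inclusion $k[G_{r-1}]\hookrightarrow k[G_r]$ does not simply identify $I_{r-1}$ with the part of $I_r$ in the smaller variables, because new $(-1)$-curves on $S_r$ meeting $E_r$ create new monomials in old multidegrees. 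You yourself flag that the nef classes for $r=7,8$ are ``the combinatorial heart of the theorem''; that is exactly right, and nothing in your proposal addresses it, so what you have is an honest roadmap rather than a proof.
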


The number of minimal generators of $I_r$ is given as follows.
\noindent\[
\begin{array}{c|c|c|c|c|c}
r & 4 & 5 & 6 & 7 & 8 \\
\hline
\text{number of minimal generators of $I_r$} & 5 & 20 & 81 & 529 & 17399 \\
\end{array}
\]

\subsection{Green-Lazarsfeld index}\label{glsubsec}
The aim of this subsection is to prove the following theorem.

\begin{theorem}\label{index}
$\index(\Cox(S_4))=2$ and $\index(\Cox(S_r))=1$ for $5 \leq r \leq 8$.
\end{theorem}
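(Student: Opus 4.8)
The plan is to split the statement into the lower bounds $\index(\Cox(S_r)) \geq 1$ for $r \geq 4$, the extra lower bound $\index(\Cox(S_4)) \geq 2$, and the matching upper bounds. The lower bound $\index(\Cox(S_r)) \geq 1$ is precisely the property $N_1$, which by the remark in Subsection \ref{syzsubsec} amounts to projective normality (Proposition \ref{X_rprop}(2)) together with the ideal $I_r$ being generated by quadrics; the latter is the Batyrev--Popov conjecture, Theorem \ref{bp}. So for $5 \leq r \leq 8$ the content is entirely in the upper bound $\index(\Cox(S_r)) \leq 1$, i.e. producing a nonzero linear syzygy among the quadric generators of $I_r$, equivalently showing $b_{2,4}(\Cox(S_r)) \neq 0$ (a degree-$4$ first syzygy on a degree-$2$ minimal generator). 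For $r=4$ I would instead first establish $N_2$ directly and then show $N_3$ fails.

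For the upper bound when $5 \leq r \leq 8$, I would exploit the numerical data already assembled. From Corollary \ref{Bcox} (and its analogues for $r=6,7,8$, which the Hilbert function computation of Section \ref{hilsec} makes available) we know the alternating sums $B_j(\Cox(S_r)) = \sum_{i\geq 0}(-1)^i b_{i,j}$. Since $\Cox(S_r)$ satisfies $N_1$, the Betti table has $b_{0,0}=1$, $b_{1,j}=0$ for $j\neq 2$, and the first place an "extra" entry can appear off the quadratic strand is $b_{2,4}$. The K-polynomial / alternating-sum bookkeeping, combined with the known number of minimal quadric generators $b_{1,2}=\dim (I_r)_2$ (the table at the end of Subsection \ref{bpsubsec}: $20,81,529,17399$ for $r=5,6,7,8$) and the value of $B_3$, forces $B_4$ to record a genuine $b_{2,4}>0$: if we had $N_2$, then $b_{2,3}$ would be the only entry in column $2$ of weight $\leq 3$ and $b_{2,4}=b_{3,4}=0$, which is incompatible with the computed $B_3, B_4$ (for $r=5$, $B_3 = 48 = b_{2,3}$ and $B_4 = 7 \neq 0$, and $b_{0,4}=b_{1,4}=0$, so some $b_{i,4}$ with $i\geq 2$ is nonzero; by minimality of the resolution and consecutiveness of the quadratic strand under $N_1$, this can only be $b_{2,4}$). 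Hence $N_2$ fails for $r\geq 5$ and the index is exactly $1$. I would write this out as: assume $N_2$; derive from the resolution shape that $b_{i,j}=0$ for $(i,j)$ with $2\le i$, $j\le i+1$ except the quadratic strand; compute $B_3, B_4$ from the Hilbert function; reach a contradiction with a nonvanishing $B_4$ (or $B_3$ having a contribution that cannot be cancelled).

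For $S_4$ the approach is cleanest: $\Cox(S_4) \cong \Cox$ of the Plücker-embedded Grassmannian $Gr(2,5)\subset \P^9$ (the remark after Proposition \ref{popov}). The Betti table of $Gr(2,5)$ in its Plücker embedding is classically known and is exactly the one displayed in Theorem \ref{bettithm}(1): Pfaffian in nature, with linear resolution except for the top. So $b_{i,j}=0$ for $j\geq i+2$ with $i\leq 2$ (giving $N_2$), while $b_{3,6}=1\neq 0$ violates $N_3$; thus $\index(\Cox(S_4))=2$. Alternatively, without invoking the Grassmannian, one reads off $N_2$ from Corollary \ref{Bcox}: $B_3(\Cox(S_4))=5$ and $B_4(\Cox(S_4))=0$ are consistent with $b_{2,3}=5$ and no $b_{i,j}$ with $j\in\{4,5\}$, $j< i+2$ — hence no linear syzygy among the quadrics — while the single entry $B_5=-1$ sits at $b_{3,6}$ (Gorenstein duality, Corollary \ref{greendual}), forcing a weight-$6$ third syzygy and so $N_3$ to fail. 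Either way the index is exactly $2$.

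The main obstacle is the upper bound for $6\le r\le 8$: there the Betti table is large and I only have the alternating sums $B_j$, not the individual $b_{i,j}$, so I must argue that no cancellation can hide the putative $b_{2,4}$. The key point making this work is that $N_1$ pins down the entire weight-$\le 3$ part of the Betti table (only $b_{0,0}$, $b_{1,2}$, and $b_{2,3}$ can be nonzero there), so $B_3 = -b_{2,3}$ or $B_3=b_{1,3}-b_{2,3}+b_{3,3}$ degenerates to $-b_{2,3}$ and, crucially, $B_4 = b_{2,4} - b_{3,4}$ with $b_{3,4}$ also forced to vanish under $N_1$ (a weight-$4$ second syzygy sits on weight-$\le 4$ generators of the first syzygy module, which under $N_2$ would all have weight $3$) — so $N_2$ would give $B_4 = 0$ outright, and any computed $B_4\neq 0$ immediately refutes it. I would therefore check that $B_4(\Cox(S_r))\neq 0$ for $r=5,6,7,8$ from the Hilbert polynomials in Theorem \ref{hilftthm}, which is a finite routine computation, and this closes the argument.
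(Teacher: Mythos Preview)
Your approach for $r=4$ and $r=5$ is essentially the paper's. For $r=4$ the paper also invokes $Gr(2,5)$; for $r=5$ the paper writes $7 = B_4(\Cox(S_5)) = b_{2,4} - b_{3,4}$ and concludes $b_{2,4} \geq 7$. However, your assertion that ``$b_{3,4}$ is forced to vanish under $N_1$'' (or $N_2$) is incorrect: property $N_p$ constrains only columns $i \leq p$ of the Betti table, while $b_{3,4}$ sits in the linear strand at column $3$, which $N_2$ leaves entirely free. What actually makes the $r=5$ argument go through is the \emph{sign} of $B_4$: from $b_{2,4} - b_{3,4} = 7 > 0$ and $b_{3,4} \geq 0$ one gets $b_{2,4} > 0$ directly.

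This is precisely where your plan for $6 \leq r \leq 8$ breaks down. You propose to check $B_4(\Cox(S_r)) \neq 0$, but under $N_2$ one has $b_{2,4} = 0$ and hence $B_4 = -b_{3,4} \leq 0$; only $B_4 > 0$ yields a contradiction. A direct computation from the Hilbert polynomial in Theorem~\ref{hilftthm} gives for instance $B_4(\Cox(S_6)) = -540$, which is perfectly compatible with $N_2$ (it would simply read $b_{3,4} = 540$). So the alternating sums alone cannot rule out $N_2$ for $r \geq 6$, and your ``finite routine computation'' does not close the argument. The paper handles $6 \leq r \leq 8$ by a genuinely different idea: the blow-down $S_r \to S_5$ induces an inclusion $k[G_5] \hookrightarrow k[G_r]$ with $I_5 \subset I_r$, from which one obtains $b_{2,4}(\Cox(S_r)) \geq b_{2,4}(\Cox(S_5)) > 0$, transporting the obstruction established at $r=5$ to all higher $S_r$.
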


\begin{proof}
We first deal with the case $r=4$.
Recall that $X_4 = Gr(2,5) \subset \P^9$. Then it is well known that $Gr(2,5)$ satisfies $N_2$ property but not $N_3$ property (see e.g., \cite[Proposition 3.8 and Remark 3.9]{EGHP}). Thus $\index(\Cox(S_4))=2$.
Considering the case $r=5$. We know that $\index(\Cox(S_5)) \geq 1$ by Proposition \ref{X_rprop} and  Theorem \ref{bp}. By Corollary \ref{Bcox}, we have
$$
7=B_4(\Cox(S_5))=-b_{3,4}(\Cox(S_5))+b_{2,4}(\Cox(S_5))
$$
so that $b_{2,4}(\Cox(S_5)) \geq 7$. Thus $\index(\Cox(S_5)) \leq 1$, and hence, $\index(\Cox(S_5))=1$. Finally, we consider the case $6 \leq r \leq 8$.
First, we can think that $I_5 \subset I_{r}$ for $6 \leq r \leq 8$. We have
$$
b_{2,4}(\Cox(S_{r})) \geq b_{2,4}(\Cox(S_5))
$$
for $6 \leq r \leq 8$
so that $b_{2,4}(\Cox(S_r)) \neq 0$ for $6 \leq r \leq 8$. Thus $\index(\Cox(S_r)) \leq 1$, and hence, $\index(\Cox(S_r))=1$ for $5 \leq r \leq 8$.
\end{proof}

We will see in Section \ref{syzdeg4subsec} that $b_{2, Q}(\Cox(S_r)) \neq 0$ for any conic $Q$ and $5 \leq r \leq 8$ where $b_{2,Q}$ denotes the multigraded Betti number (see Remark \ref{b2q}).

\section{Syzygies of Cox rings of del Pezzo surfaces}\label{syzdeg4subsec}
In this section, we completely determine the $\Pic(S_r)$-graded minimal free resolution of $\Cox(S_r)$ for $r=4,5$. For this purpose, we first briefly show some basic properties of multigraded Betti numbers of $\Cox(S_r)$ in full generality, and then, we compute the $\Pic(S_r)$-graded Betti numbers of $\Cox(S_r)$ for $r=4,5$.

\subsection{Basic properties of $\Pic(S_r)$-graded Betti numbers of $\Cox(S_r)$}

Let $S_r$ be a del Pezzo surface of degree $9-r$. As before, we assume that $4 \leq r \leq 8$.
We first briefly review the definitions of $\Pic(S_r)$-graded Betti numbers of $\Cox(S_r)$.
Let $R_r:=k[G_r]$.
Recall that $R_r$ and $\Cox(S_r)$ are $\Pic(S_r)$-graded (so is $I_r$).
The \emph{$\Pic(S_r)$-graded minimal free resolution} of $\Cox(S_r)$ is of the form
$$
 R_r \longleftarrow \bigoplus_{D \in \Pic(S_r)} R_r(-D)^{b_{1,D}(\Cox(S_r))} \longleftarrow \bigoplus_{D \in \Pic(S_r)} R_r(-D)^{b_{2,D}(\Cox(S_r))} \longleftarrow \cdots
$$
where $b_{i,D}(\Cox(S_r))$ denotes the \emph{$\Pic(S_r)$-graded Betti number}.
Let $C_1, \ldots, C_{|G_r|}$ be divisors of sections of $G_r$.
By considering the Koszul complex, the $\Pic(S_r)$-graded Betti number $b_{i,D}(\Cox(S_r))$ can be computed as the dimension of the homology of
$A(D)_{i+1} \to A(D)_i \to A(D)_{i-1}$ where
$$
A(D)_d:=\bigoplus_{1 \leq i_1 < \cdots < i_{d} \leq |G_r|} H^0(S_r, D-\sum C_{i_j}) .
$$
We refer to \cite{LV} and \cite{TVAV} for more details on $\Pic(S_r)$-graded Betti numbers.

As in the $\Z$-graded case, we also define $B_{D}(\Cox(S_r)):=\sum_{i \geq 0} (-1)^i b_{i,D}(\Cox(S_r))$. Then we have
$$
b_{i,j}(\Cox(S_r))=\sum_{\deg D=j} b_{i, D}(\Cox(S_r)) \text{ and } B_j(\Cox(S_r))=\sum_{\deg D=j} B_D(\Cox(S_r)).
$$

We now show some basic properties of $\Pic(S_r)$-graded Betti numbers of $\Cox(S_r)$.
First, we prove the nefness of some divisor $D$ with nonvanishing Betti number.

\begin{lemma}[Nefness]\label{nef}
If $b_{i,D}(\Cox(S_r)) \neq 0$ and $\deg D = -K_{S_r}.D=i+1$, then $D$ is nef.
\end{lemma}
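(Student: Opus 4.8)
The plan is to argue by contradiction: suppose $D$ is not nef, and deduce that $b_{i,D}(\Cox(S_r))=0$ for \emph{every} $i$, contradicting the hypothesis. (The auxiliary assumption $\deg D=i+1$ will in fact play no role.) Since $D$ is not nef and $\Eff(S_r)$ is generated by $(-1)$-curves — equivalently, by Lemma~\ref{computeh0lem}(1) — there is a $(-1)$-curve $E$ with $D\cdot E\le -1$. Let $x_E\in G_r$ be the variable of $R_r=k[G_r]$ attached to $E$, put $G_r':=G_r\setminus\{x_E\}$, and let $s_E\in H^0(S_r,E)=\Cox(S_r)_E$ be the image of $x_E$; recall $\Cox(S_r)$ is an integral domain, so $s_E$ is a nonzerodivisor. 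Throughout I write $C_J:=\sum_{j\in J}C_j$.

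First I split off the variable $x_E$ from the Koszul complex $A(D)_\bullet$, which by definition computes $b_{\bullet,D}(\Cox(S_r))$. Let $A'(D')_\bullet$ denote the analogous Koszul complex built only from the generators in $G_r'$, so $A'(D')_d=\bigoplus_{J\subseteq G_r',\,|J|=d}H^0(S_r,D'-C_J)$. Sorting the index sets $J$ appearing in $A(D)_d$ according to whether or not they contain $x_E$ identifies $A(D)_\bullet$ with the mapping cone $\Cone(\phi)$ of a chain map $\phi\colon A'(D-E)_\bullet\to A'(D)_\bullet$; on the summand of $A'(D-E)_d$ indexed by $J\subseteq G_r'$ the map $\phi$ is (up to sign) multiplication by $s_E$, i.e. $s_E\cdot\colon H^0(S_r,D-E-C_J)\to H^0(S_r,D-C_J)$. (This is the usual \emph{one variable at a time} structure of a Koszul complex.)

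The crux is that $\phi$ is a \emph{termwise isomorphism}. Injectivity is immediate, since $\Cox(S_r)$ is a domain and $s_E\neq 0$. For surjectivity, fix $J\subseteq G_r'$: each generator other than $x_E$ is a $(-1)$-curve different from $E$, or — when $r=8$ — a section of $-K_{S_8}$, and in either case has non-negative intersection with $E$; hence $C_J\cdot E\ge 0$, and therefore $(D-C_J)\cdot E=D\cdot E-C_J\cdot E\le-1<0$. By the argument used to prove Lemma~\ref{computeh0lem}(1), $E$ is then a fixed component of $|D-C_J|$, so every section of $D-C_J$ vanishes along $E$ and is hence divisible by $s_E$; thus $s_E\cdot$ is surjective. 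It follows that $\phi$ is an isomorphism of complexes, so its cone $A(D)_\bullet$ is exact, whence $b_{i,D}(\Cox(S_r))=H_i(A(D)_\bullet)=0$ for all $i$. This contradicts $b_{i,D}(\Cox(S_r))\neq 0$, so $D$ must be nef.

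I do not expect a genuine obstacle; the two places that need care are (i) checking uniformly in $r$ that every generator different from $x_E$ meets $E$ non-negatively — in particular for the extra anticanonical generators present when $r=7,8$ — and (ii) writing down the decomposition of $A(D)_\bullet$ as $\Cone(\phi)$ with the correct signs. If one prefers to avoid mapping cones, the same conclusion follows from the change-of-rings isomorphism $\operatorname{Tor}^{R_r}_i(\Cox(S_r),k)\cong\operatorname{Tor}^{R_r/x_ER_r}_i\big(\Cox(S_r)/s_E\Cox(S_r),k\big)$, valid because $x_E$ is a nonzerodivisor on both $R_r$ and $\Cox(S_r)$, together with the observation — immediate from the surjectivity statement above — that $\Cox(S_r)/s_E\Cox(S_r)$ vanishes in every multidegree $D'$ with $D'\cdot E<0$, so that the Koszul complex over $R_r/x_ER_r$ computing the right-hand side is identically zero in multidegree $D$.
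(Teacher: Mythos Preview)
Your proof is correct, and in fact proves the stronger statement that $b_{i,D}(\Cox(S_r))=0$ for \emph{all} $i$ whenever $D$ is not nef; the hypothesis $\deg D=i+1$ is indeed irrelevant. The splitting of the Koszul complex on $G_r$ as the cone of multiplication by $s_E$ on the Koszul complex on $G_r'$ is exactly right, and your key observation --- that $(D-C_J)\cdot E<0$ for every $J\subseteq G_r'$ because each remaining generator meets $E$ non-negatively --- forces the cone to be acyclic. One minor slip: the extra anticanonical generators occur only for $r=8$, not $r=7$; but since $(-K_{S_8})\cdot E=1\ge 0$ your argument handles them without change.

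The paper argues quite differently. It proceeds by induction on $i$, using the degree hypothesis at each step: from $b_{k,D}\ne 0$ one gets $b_{k-1,D-E}\ne 0$ for some generator $E$, and by induction $D':=D-E$ is nef; assuming $D$ is not nef, one checks $D\cdot E=-1$ and that $D'$ is the \emph{unique} nef divisor of the form $D-(\text{generator})$, then argues with the minimal free resolution in degree $D$ that the summand $R_r(-D)^{b_{k,D}}$ must vanish. Your route is cleaner, avoids the inductive bookkeeping and the uniqueness-of-predecessor step, and gives a sharper conclusion. The paper's approach, by contrast, stays closer to the minimal free resolution itself and makes visible how the nef predecessors control the syzygies on the linear strand --- information that is useful later in the paper when analysing specific Betti numbers (e.g.\ in the proof of Theorem~\ref{bettiS_5}).
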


\begin{proof}
We use the induction on $i$.
If $b_{1,D}(\Cox(S_r)) \neq 0$ and $\deg D=2$, then $D$ is nef as we already saw in Subsection \ref{bpsubsec}.
Alternatively, one can directly prove it (see \cite[Lemma 4.1]{LV}).
For the induction, we now assume that the assertion holds for $i<k$ and $b_{k,D}(\Cox(S_r)) \neq 0$ for some divisor $D$ with $\deg D=k+1$.
Then we have $b_{k-1,D-E}(\Cox(S_r)) \neq 0$ for some $(-1)$-curve $E$ on $S_r$. By the induction, $D':=D-E$ is nef and $\deg D'=k$.
Suppose that $D$ is not nef. Then we must have $D.E<0$.
Note that there is no nef divisor $D''$ on $S_r$ different from $D'$ such that $D=D''+E'$ where $E'$ is a $(-1)$-curve on $S_r$.
Thus $R_r(-D)^{b_{k,D}(\Cox(S_r))}$ maps into $R_r(-D')^{b_{k-1,D'}(\Cox(S_r))}$ in the minimal free resolution of $\Cox(S_r)$. 
Now consider the map $R_r(-D')^{b_{k-1,D'}(\Cox(S_r))} \to \bigoplus_{B \in \Pic(S_r)} R_r(-B)^{b_{k-2, B}(\Cox(S_r))}$ in the minimal free resolution of $\Cox(S_r)$. 
Since $R_r(-D')_{D'}^{b_{k-1,D'}(\Cox(S_r))}$ injects into $\left(\bigoplus_{B \in \Pic(S_r)} R_r(-B)^{b_{k-2, B}(\Cox(S_r))} \right)_{D'}$, and $R_r(-D')_{D'}^{b_{k-1,D'}(\Cox(S_r))}$ and $R_r(-D')_{D}^{b_{k-1,D'}(\Cox(S_r))}$ are the same dimensional vector spaces, it follows that $R_r(-D')_{D}^{b_{k-1,D'}(\Cox(S_r))}$ also injects into  $\left(\bigoplus_{B \in \Pic(S_r)} R_r(-B)^{b_{k-2, B}(\Cox(S_r))} \right)_{D}$.
Then we have $R_r(-D)^{b_{k,D}(\Cox(S_r))}=0$, which is a contradiction.
\end{proof}

Recall that the Weyl group $W_r$ of the root system $R_r \subset \Pic(S_r)$ naturally acts on $\Pic(S_r)$.

\begin{lemma}[Weyl group invariance]\label{weyl}
If $D, D'  \in \Pic(S_r)$ are in the same orbit of $W_r$-action, then $b_{i,D}(\Cox(S_r))=b_{i,D'}(\Cox(S_r))$.
\end{lemma}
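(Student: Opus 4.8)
The plan is to lift the $W_r$-action from $\Pic(S_r)$ to the entire minimal free resolution of $\Cox(S_r)$ over $R_r=k[G_r]$. The essential input is that $W_r$ acts on the ring $\Cox(S_r)$ itself by $k$-algebra automorphisms compatible with the grading: for each $w\in W_r$ there is an isomorphism $\phi_w\colon\Cox(S_r)\to\Cox(S_r)$ with $\phi_w\bigl(\Cox(S_r)_D\bigr)=\Cox(S_r)_{w(D)}$ for every $D\in\Pic(S_r)$. This is plausible because $W_r$ preserves the intersection form and fixes $[-K_{S_r}]$, hence permutes the set of $(-1)$-curve classes (and fixes $[-K_{S_8}]$ when $r=8$); in particular it carries the generating subspace of distinguished sections, together with $H^0(S_8,-K_{S_8})$ in the case $r=8$, to itself. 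A purely numerical byproduct that I would record along the way is the $W_r$-invariance of $D\mapsto h^0(S_r,D)$; this also follows independently from Lemma \ref{computeh0lem} by induction on $\deg D$, since subtracting a $(-1)$-curve $E$ with $D\cdot E<0$ commutes with the $W_r$-action, and for nef $D$ the value $\tfrac12(D^2+\deg D+2)$ depends only on the $W_r$-invariant quantities $D^2$ and $-K_{S_r}\cdot D$.

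Granting $\phi_w$, the next step is routine. Since $H^0(S_r,E)$ is one-dimensional for a $(-1)$-curve $E$, $\phi_w$ carries the distinguished section $s_E$ to a nonzero multiple of $s_{w(E)}$, so there is a $\Pic(S_r)$-graded automorphism $\psi_w\colon R_r\to R_r$ with $\psi_w(x_E)=c_E\,x_{w(E)}$ for suitable $c_E\in k^{\times}$ (and an invertible linear substitution on the two variables of degree $-K_{S_8}$ when $r=8$) such that $g_r\circ\psi_w=\phi_w\circ g_r$. Because $\phi_w$ is bijective, $\psi_w(I_r)=\psi_w(\ker g_r)=\ker g_r=I_r$, so $\psi_w$ is an automorphism of the pair $(R_r,I_r)$ which twists the $\Pic(S_r)$-grading by $w$. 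Transport of structure along $\psi_w$ then sends the minimal free resolution $F_\bullet$ of $\Cox(S_r)$ to another minimal free resolution of $\Cox(S_r)$; by uniqueness it is isomorphic to $F_\bullet$, and comparing the $D$-graded components of $F_i=\bigoplus_{D\in\Pic(S_r)}R_r(-D)^{b_{i,D}(\Cox(S_r))}$ before and after the twist forces $b_{i,D}(\Cox(S_r))=b_{i,w(D)}(\Cox(S_r))$ for all $i$, $D$ and $w$. Equivalently, one can argue on the Koszul side used in this section: $b_{i,D}(\Cox(S_r))$ is the dimension of the homology at the middle term of $A(D)_{i+1}\to A(D)_i\to A(D)_{i-1}$, and the permutation of generators induced by $w$ together with the identifications $H^0(S_r,M)\cong H^0(S_r,w(M))$ coming from $\phi_w$ provides an isomorphism of complexes $A(D)_\bullet\cong A(w(D))_\bullet$.

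The main obstacle is the very first step, the construction of the graded algebra automorphism $\phi_w$ lifting $w$. Merely permuting the generating variables according to the $W_r$-action on $(-1)$-curves does not preserve $I_r$ on the nose, because the scalar coefficients of the quadratic relations (coming, for instance, from the cross-ratios of the three singular fibres of a conic fibration as in Subsection \ref{bpsubsec}) are not permuted in the obvious way; one must either invoke the established construction of the $W_r$-action on Cox rings of del Pezzo surfaces, or build $\phi_w$ inductively on the anticanonical degree, fixing the scalars on each graded piece so that multiplication of sections is respected. Once $\phi_w$ is available, the remaining steps are formal.
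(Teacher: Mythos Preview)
Your argument is correct, and in its second form (the Koszul-complex alternative at the end of your second paragraph) it is exactly what the paper does: the paper's three-line proof simply asserts that for $D'=w(D)$ the complexes $A(D)_\bullet$ and $A(D')_\bullet$ are isomorphic, and reads off the equality of homology dimensions. Your primary route---lifting $w$ to a graded automorphism $\psi_w$ of $R_r$ carrying $I_r$ to itself, and then invoking uniqueness of the minimal free resolution---is a reformulation that yields the same conclusion via $\text{Tor}$ rather than via the explicit Koszul strands; the two are equivalent and equally short once the automorphism is in hand.

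Where you add something is in isolating the only substantive step: the existence of a $k$-algebra automorphism $\phi_w$ of $\Cox(S_r)$ compatible with the $W_r$-action on the grading group. The paper takes this for granted when it says ``there is an isomorphism between $A(D)_d$ and $A(D')_d$'' (it needs an isomorphism of \emph{complexes}, not merely equality of dimensions, and the differentials involve multiplication by distinguished sections). You are right that the naive permutation of the $(-1)$-curve variables does not preserve $I_r$ without rescaling, and that one must either cite the known construction of the $W_r$-action on $\Cox(S_r)$ (as in \cite{SS}, or via the realisation of simple reflections by Cremona transformations and relabellings of exceptional curves) or build the compatible scalars by hand. Either way, once $\phi_w$ is granted, both your argument and the paper's are immediate.
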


\begin{proof}
Recall that $b_{i,D}(\Cox(S_r))$ can be computed as the dimension of the homology of
$A(D)_{i+1} \to A(D)_i \to A(D)_{i-1}$ where
$$
A(D)_d:=\bigoplus_{1 \leq i_1 < \cdots < i_{d} \leq |G_r|} H^0(S_r, D-\sum C_{i_j}) .
$$
If $D$ and $D'$ are in the same orbit of $W_r$-action, then there is an isomorphism between $A(D)_d$ and $A(D')_d$. Thus we get $b_{i,D}(\Cox(S_r))=b_{i,D'}(\Cox(S_r))$.
\end{proof}

We also have the following duality (cf. Corollary \ref{greendual}).

\begin{lemma}[Duality]\label{dual}
$b_{i, D}(\Cox(S_r))=b_{\pd(\Cox(S_r))-i, \sum_{j=1}^{|G_r|}C_j + K_{S_r}-D }(\Cox(S_r))$.
\end{lemma}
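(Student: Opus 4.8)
The plan is to refine the $\mathbb{Z}$-graded statement of Corollary \ref{greendual} to the $\Pic(S_r)$-grading by using that $\Cox(S_r)$ is a Gorenstein ring (Proposition \ref{popov}), together with an explicit computation of the multidegree of its canonical module. Write $R_r = k[G_r]$ with variables $x_1,\dots,x_{|G_r|}$ in multidegrees $C_1,\dots,C_{|G_r|}$, so the canonical module of $R_r$ is $\omega_{R_r} = R_r(-\sum_{j=1}^{|G_r|} C_j)$ in the $\Pic(S_r)$-grading (each variable contributes its multidegree to the top of the Koszul complex on the $x_j$). Since $\Cox(S_r)$ is Cohen--Macaulay of codimension $c := \pd(\Cox(S_r)) = N_r - r - 2$ over $R_r$, local duality for graded modules gives $\omega_{\Cox(S_r)} = \operatorname{Ext}^c_{R_r}(\Cox(S_r), \omega_{R_r})$, and the minimal free resolution of $\omega_{\Cox(S_r)}$ over $R_r$ is obtained by dualizing that of $\Cox(S_r)$ into $\omega_{R_r}$ and reversing the arrows. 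Concretely, if $F_\bullet \to \Cox(S_r)$ is the $\Pic(S_r)$-graded minimal free resolution, then $\operatorname{Hom}_{R_r}(F_{c-\bullet}, \omega_{R_r})$ is the minimal free resolution of $\omega_{\Cox(S_r)}$, which yields
\[
b_{i,D}(\omega_{\Cox(S_r)}) = b_{c-i,\; \sum_{j} C_j - D}(\Cox(S_r)).
\]

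The next step is to identify $\omega_{\Cox(S_r)}$ itself as a twist of $\Cox(S_r)$ in the $\Pic(S_r)$-grading. By Proposition \ref{popov}, $\Cox(S_r)$ is Gorenstein, so $\omega_{\Cox(S_r)} \cong \Cox(S_r)(a)$ for a single shift $a \in \Pic(S_r)$; the point is to compute $a$. In the $\mathbb{Z}$-grading (anticanonical degree) Proposition \ref{popov} tells us $-K_{X_r} = \mathcal{O}_{X_r}(9-r)$, i.e.\ $\omega_{X_r} = \mathcal{O}_{X_r}(r-9)$, which pins down the $\mathbb{Z}$-part of $a$ as having anticanonical degree $r-9$; combined with the multigrading one expects $a = K_{S_r}$, since $\deg K_{S_r} = -K_{S_r}.K_{S_r} = -(9-r) = r-9$. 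To make this precise I would compute $\omega_{\Cox(S_r)}$ directly from its definition as $\bigoplus_D H^0(X_r,\omega_{X_r}\otimes\mathcal{O}_{X_r}(\deg D))$ matched against the $\Pic(S_r)$-decomposition $\Cox(S_r) = \bigoplus_D H^0(S_r, D)$, using that $\Cox(S_r)$ carries the finer $\Pic(S_r)$-grading and that Serre duality on $S_r$ reads $h^0(S_r,D) = h^2(S_r, K_{S_r}-D)$; tracking the multidegrees through the isomorphism $\Cox(S_r)_D \cong H^0(S_r,D)$ forces the shift to be exactly $K_{S_r}$. Substituting $\omega_{\Cox(S_r)} = \Cox(S_r)(K_{S_r})$ into the displayed formula gives
\[
b_{i,D + K_{S_r}}(\Cox(S_r)) = b_{c-i,\; \sum_j C_j - D}(\Cox(S_r)),
\]
and replacing $D$ by $D - K_{S_r}$ yields $b_{i,D}(\Cox(S_r)) = b_{c-i,\; \sum_j C_j + K_{S_r} - D}(\Cox(S_r))$ with $c = \pd(\Cox(S_r))$, which is the claim.

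The main obstacle is the bookkeeping of the $\Pic(S_r)$-multidegree of the canonical module: the $\mathbb{Z}$-graded statement of Proposition \ref{popov} only determines the anticanonical degree of the shift, and one must upgrade this to the statement that the shift is precisely $K_{S_r}$ in $\Pic(S_r)$. I would handle this either by the Serre-duality argument on $S_r$ sketched above (which is the cleanest, since $\Cox(S_r)_D = H^0(S_r,D)$ is built into the definition), or, alternatively, by noting that the dualizing module is a rank-one reflexive module over the UFD $\Cox(S_r)$ (Proposition \ref{X_rprop}(3)), hence free, and then matching one Hilbert-function coefficient in each $\Pic(S_r)$-graded strand to read off the twist. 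Everything else — local duality for graded Cohen--Macaulay modules, minimality of the dualized resolution since $\Cox(S_r)$ is Gorenstein, and the identification $\omega_{R_r} = R_r(-\sum_j C_j)$ — is standard and carries the multigrading automatically.
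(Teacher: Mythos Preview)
Your approach is essentially the same as the paper's: dualize the minimal free resolution using Gorensteinness (Proposition~\ref{popov}) and then identify $\omega_{\Cox(S_r)}$ as $\Cox(S_r)$ shifted by $K_{S_r}$ in the $\Pic(S_r)$-grading. The only difference is that the paper obtains the identification $\omega_{\Cox(S_r)} \cong \Cox(S_r)(K_{S_r})$ by a direct citation of \cite[Corollary~1.5]{HaKu}, whereas you sketch two ad hoc arguments (Serre duality on $S_r$, or the UFD property plus a degree match); your UFD argument is the cleaner of the two and is perfectly adequate, while your Serre-duality sketch is a bit loose since the $\Pic(S_r)$-grading on $\omega_{\Cox(S_r)}$ is not visible from $X_r$ alone.
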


\begin{proof}
Since $\Cox(S_r)$ is Gorenstein by Proposition \ref{popov}, the dual of the minimal free resolution of $\Cox(S_r)$ is again the minimal free resolution of $\omega_{\Cox(S_r)}\left(\sum_{j=1}^{|G_r|} C_j \right)$.
By \cite[Corollary 1.5]{HaKu}, we have $\omega_{\Cox(S_r)}\left(\sum_{j=1}^{|G_r|} C_j \right) = \Cox(S_r)\left (\sum_{j=1}^{|G_r|} C_j -K_{S_r} \right)$. Then the assertion immediately follows.
\end{proof}

\subsection{Syzygies of Cox rings of a del Pezzo surface of degree 5}
The aim of this subsection is to prove the following in several ways.

\begin{theorem}\label{bettiS_4}
The Betti diagram of $\Cox(S_4)$ is given as follows.
\noindent\[
\begin{array}{cccc}
1 & - & - & - \\
- & 5 & 5 & -\\
- & - & - & 1
\end{array}
\]
\end{theorem}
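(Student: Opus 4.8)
The plan is to fill in the $3\times4$ Betti diagram slot by slot, using only invariants already in hand. By Corollary~\ref{regpd} we have $\pd(\Cox(S_4))=N_4-4-2=3$ and $\reg(\Cox(S_4))=2(4-3)=2$, so the diagram has columns $i=0,1,2,3$ and rows $p=0,1,2$, the $(i,p)$-entry being $b_{i,i+p}(\Cox(S_4))$, and $b_{i,j}=0$ whenever $j<i$ or $j>i+2$. First I would record the routine vanishings. Since $\Cox(S_4)$ is a cyclic $R_4$-module, $b_{0,0}=1$ and $b_{0,j}=0$ for $j\neq0$. The surjection $g_4\colon k[G_4]\to\Cox(S_4)$ is an isomorphism in anticanonical degree~$1$ (the ten distinguished sections of $(-1)$-curves form a basis of $\Cox(S_4)_1$, whose dimension equals that of $k[G_4]_1$), so $I_4$ contains no linear form and $b_{1,1}=0$; by the Batyrev--Popov theorem (Theorem~\ref{bp}), $I_4$ is generated by quadrics, so $b_{1,j}=0$ for $j\neq2$ as well, and $b_{1,2}$ equals the number of minimal quadric generators of $I_4$, namely $5$ (Subsection~\ref{bpsubsec}). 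Minimality of the resolution then forces $F_2$ to be generated in degrees $\geq3$, hence $b_{2,2}=0$; and $\index(\Cox(S_4))=2$ (Theorem~\ref{index}) gives the $N_2$ property, so $b_{2,j}=0$ for $j\geq4$. Thus $F_2\cong R_4(-3)^{b_{2,3}}$ is generated purely in degree~$3$, so minimality forces $F_3$ to be generated in degrees $\geq4$ and $b_{3,3}=0$. After these steps the only undetermined entries are $b_{2,3}$, $b_{3,4}$ and $b_{3,5}$.

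These I would pin down from the alternating Betti sums $B_j(\Cox(S_4))=\sum_i(-1)^ib_{i,j}(\Cox(S_4))$ tabulated in Corollary~\ref{Bcox}: with the vanishings above one has $B_3=b_{2,3}$, $B_4=-b_{3,4}$ and $B_5=-b_{3,5}$, and substituting $B_3=5$, $B_4=0$, $B_5=-1$ gives $b_{2,3}=5$, $b_{3,4}=0$, $b_{3,5}=1$. This is precisely the asserted diagram; as consistency checks, $B_0=b_{0,0}=1$, $B_1=-b_{1,1}=0$ and $B_2=-b_{1,2}=-5$ all agree.

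Since the subsection advertises several proofs, I would also give the duality-based one and the geometric one. Gorenstein duality in the form of Corollary~\ref{greendual} specializes for $r=4$, $N_4=9$ to $b_{i,j}(\Cox(S_4))=b_{3-i,5-j}(\Cox(S_4))$, which at once yields $b_{3,5}=b_{0,0}=1$, $b_{3,4}=b_{0,1}=0$, $b_{3,3}=b_{0,2}=0$, $b_{2,4}=b_{1,1}=0$ and $b_{2,3}=b_{1,2}=5$, using no more than the Batyrev--Popov theorem; one can also derive this from the multigraded duality of this section and sum over anticanonical degrees. Geometrically, recall from Subsection~\ref{popovsubsec} that $X_4$ is the Pl\"ucker-embedded $Gr(2,5)\subset\P^9$; its homogeneous ideal is generated by the five $4\times4$ Pfaffians of a generic skew-symmetric $5\times5$ matrix of linear forms, and the Buchsbaum--Eisenbud structure theorem for Gorenstein ideals of codimension~$3$ produces the self-dual resolution $0\to R_4(-5)\to R_4(-3)^5\to R_4(-2)^5\to R_4\to\Cox(S_4)\to0$, from which the diagram is read off directly.

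The computation carries essentially no obstacle, because the substantive input has been front-loaded into Corollaries~\ref{regpd} and~\ref{Bcox} and Theorem~\ref{index}; the only point requiring care is to avoid circularity. The bound $\index(\Cox(S_4))\geq2$ used above to force $b_{2,4}=0$ rests on the classical $N_2$ property of $Gr(2,5)$ and is therefore independent of this calculation, so it is legitimate to invoke it; alternatively, the duality route needs only $\index(\Cox(S_4))\geq1$, i.e.\ Batyrev--Popov. The single genuinely computational fact---that $I_4$ has exactly $5$ minimal generators, equivalently $b_{1,2}=-B_2=5$---is the one I would double-check most carefully.
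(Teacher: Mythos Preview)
Your proposal is correct and essentially mirrors the paper's own treatment: the paper's primary argument invokes $\reg(\Cox(S_4))=2$, $\pd(\Cox(S_4))=3$, and Gorensteinness (i.e.\ the duality you give as your second route) to reduce to determining $b_{1,2}$ and $b_{2,3}$ from $B_2$ and $B_3$, and then separately records the Buchsbaum--Eisenbud resolution of $Gr(2,5)\subset\P^9$ exactly as in your third route. Your first route, which replaces the appeal to duality by the $N_2$ property from Theorem~\ref{index} plus the values $B_4,B_5$, is a harmless variant and your circularity check is sound, since the paper establishes $\index(\Cox(S_4))=2$ via the external reference \cite{EGHP} rather than via this Betti table.
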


\begin{proof}
Since we know $X_4=Gr(2,5) \subset \P^9$, we can easily verify the assertion. However, we give an alternative proof. Recall that $\reg(\Cox(S_4))=2$ and $\pd(\Cox(S_4))=3$ (Corollary \ref{regpd}). Furthermore, $\Cox(S_4)$ is Gorenstein (Proposition \ref{popov}). Thus we only have to determine $b_{1,2}(\Cox(S_4))$ and $b_{2,3}(\Cox(S_4))$.
We now have
$$
-5=B_2(\Cox(S_4))=b_{1,2}(\Cox(S_4)) \text{ and } 5=B_3(\Cox(S_4))=b_{2,3}(\Cox(S_4)),
$$
so we get the assertion.
\end{proof}

Now we present the $\Pic(S_4)$-minimal free resolution of $\Cox(S_4)$.
As in Subsection \ref{bpsubsec}, we write $R_4=k[x_1, \ldots, x_4, x_{12}, \ldots, x_{34}]$.
Recall that $X_4=Gr(2,5) \subset \P^9$ is arithematically Gorenstein in codimension 3.
Using {Buchsbaum-Eisenbud theorem}, we obtain the $\Pic(S_4)$-minimal free resolution of $\Cox(S_4)$
$$
 R_4 \xleftarrow{\varphi} \bigoplus_{Q:\text{conic}} R_4(-Q) \xleftarrow{M} \bigoplus_{C:\text{twisted  cubic}} R_4(-C) \xleftarrow{\varphi^t} R(K_{S_4}) \leftarrow 0
$$
where entries of matrix $\varphi$ are {Pfaffians} of $M$ and
\begin{displaymath}
M=
\left( \begin{array}{ccccc}
0 & x_1 & x_2 & x_3 & x_4\\
-x_1 & 0 & x_{34} & x_{24} & x_{23}\\
-x_2 & -x_{34} & 0 & x_{14} & x_{13}\\
-x_3 & -x_{24} & -x_{14} & 0 & x_{12}\\
-x_4 & -x_{23} & -x_{13} & -x_{12} & 0
\end{array} \right).
\end{displaymath}
In particular, we have also shown Theorem \ref{bettiS_4}.

\subsection{Syzygies of Cox rings of a del Pezzo surface of degree 4}
The aim of this subsection is not only to prove the following but also to determine the $\Pic(S_5)$-graded Betti numbers of $\Cox(S_5)$.

\begin{theorem}\label{bettiS_5}
The Betti diagram of $\Cox(S_5)$ is given as follows.
\noindent\[
\begin{array}{ccccccccc}
1 & - & - & - & - & - & - & - & -\\
- & 20 & 48 & 3 & -   & -   & -   & -   &-\\
- & - & 10 & 176 & 280   & 176   & 10   & -   &-\\
- & - & -  & -     & -       & 3   & 48   & 20   &-\\
- & - & - & - & -   & -   & -   & -   &1
\end{array}
\]
\end{theorem}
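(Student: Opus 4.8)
The plan is to compute the entire $\Pic(S_5)$-graded minimal free resolution of $\Cox(S_5)$ and then recover the $\Z$-graded Betti diagram from $b_{i,j}(\Cox(S_5))=\sum_{\deg D=j}b_{i,D}(\Cox(S_5))$. I would begin with three reductions. By Corollary \ref{regpd} we have $\reg(\Cox(S_5))=4$ and $\pd(\Cox(S_5))=8$, so $b_{i,D}(\Cox(S_5))=0$ unless $0\le i\le 8$ and $\deg D\le i+4$; since $\Cox(S_5)$ is generated in degree $0$ and $I_5$ by quadrics (Theorem \ref{bp}), one also has $b_{0,D}\neq0$ only for $D=0$ and $b_{i,D}=0$ whenever $\deg D\le i$, $i\ge1$. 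By Lemma \ref{weyl}, $b_{i,D}$ depends only on the $W_5$-orbit of $D$, so in each homological degree and each admissible $\Z$-degree only finitely many orbit representatives need be examined. Finally, summing the sixteen $(-1)$-curve classes gives $\sum_{s=1}^{16}C_s+K_{S_5}=-3K_{S_5}$, so Lemma \ref{dual} becomes $b_{i,D}(\Cox(S_5))=b_{8-i,\,-3K_{S_5}-D}(\Cox(S_5))$, which on $\Z$-degrees is the Gorenstein duality $b_{i,j}=b_{8-i,12-j}$ of Corollary \ref{greendual}; this halves the remaining work.

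The heart of the argument is the computation of the linear strand, i.e.\ of all $b_{i,D}$ with $\deg D=i+1$. By Lemma \ref{nef} such a $D$ with $b_{i,D}\neq0$ must be nef, and using the description of $\Nef(S_5)$ together with the Hodge index theorem one checks that the nef classes of small anticanonical degree on $S_5$ split into a short list of $W_5$-orbits (the $10$ conics in degree $2$, the $16$ twisted cubics in degree $3$, then $-K_{S_5}$ and a few further orbits in higher degree). For $i=1$ the relations are the $20$ quadrics described in Subsection \ref{bpsubsec}, distributed as $b_{1,Q}(\Cox(S_5))=2$ over the $10$ conics $Q$. For each remaining nef orbit representative $D$ I would compute $b_{i,D}$ as the dimension of the homology of $A(D)_{i+1}\to A(D)_i\to A(D)_{i-1}$, where $A(D)_d=\bigoplus_{|S|=d,\ S\subseteq\{1,\dots,16\}}H^0(S_5,D-\sum_{s\in S}C_s)$ and each $h^0$ is evaluated via Lemma \ref{computeh0lem}. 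Summing over the orbits, and cross-checking against the duality, should give $b_{2,3}=48$, $b_{3,4}=3$ and $b_{i,i+1}=0$ for $i\ge4$; dually $b_{5,8}=3$, $b_{6,9}=48$, $b_{7,10}=20$, $b_{8,12}=1$, and also $b_{2,5}=b_{3,6}=0$ (the duals of $b_{6,7}=b_{5,6}=0$).

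At this point the rows $\deg D-i\in\{0,1,3,4\}$ of the Betti table are known, and the single remaining row $\deg D-i=2$ is forced numerically. The alternating-sum identities $\sum_i(-1)^ib_{i,j}=B_j$ of Corollary \ref{Bcox} collapse, for $j=4,5,6$, to $b_{2,4}-b_{3,4}=7$, $-b_{3,5}=-176$, $b_{4,6}=280$, whence $b_{2,4}=10$, $b_{3,5}=176$, $b_{4,6}=280$ (in particular $b_{2,4}=10\ge7$, consistent with the proof of Theorem \ref{index}), and duality supplies $b_{6,8}=10$ and $b_{5,7}=176$. This is the claimed Betti diagram; moreover the orbit-by-orbit output of the previous step produces the finer $\Pic(S_5)$-graded minimal free resolution promised in the statement (cf.\ Remark \ref{b2q}).

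I expect the main obstacle to be the scale and bookkeeping of the multigraded Koszul-cohomology computation underlying the linear strand: with $|G_5|=16$ the complex $A(D)_\bullet$ has $\binom{16}{d}$ summands, and it must be run over every $W_5$-orbit representative $D$ occurring in homological degrees $2,3,4$, all while exploiting the Weyl symmetry, the self-duality of the resolution, and the numbers $B_j$ as consistency checks. Enumerating the relevant nef orbits correctly in each degree, and verifying that $b_{i,D}$ vanishes outside the expected orbits, is the delicate step; once the orbit list and the table of $h^0$-values (from Lemma \ref{computeh0lem}) are assembled, each individual homology calculation is routine linear algebra.
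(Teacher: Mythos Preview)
Your skeleton matches the paper's: the reductions from $\reg$, $\pd$, Gorenstein duality, and Weyl invariance are its Step~1, and extracting the middle row from the $B_j$ is its Step~5; the paper likewise reads off $b_{2,3}=48$ directly from $B_3$ and gets $b_{2,5}=b_{3,6}=b_{4,7}=0$ from duality. The genuine divergence is in how the remaining linear-strand entries $b_{3,4}$ and $b_{4,5}$ are nailed down. You propose uniform direct Koszul-homology computations of $A(D)_\bullet$ over each nef orbit representative; the paper instead works case by case in degree~$4$---showing $b_{3,2Q}=0$ by an effectivity check, $b_{2,C+E}=0$ by passing to an initial ideal (with a specially chosen term order, via {\tt Macaulay2}) and invoking upper semicontinuity, and $b_{2,-K_{S_5}}=0$ by a short Schreyer/Gr\"obner argument---so that $b_{3,4}=b_{3,-K_{S_5}}=3$; it then deduces $b_{4,5}=0$ with no further computation by a structural injectivity argument (in the spirit of the proof of Lemma~\ref{nef}) that exploits the concentration of $b_{3,4}$ at $-K_{S_5}$. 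Your route is conceptually cleaner and more systematic, but note that Lemma~\ref{computeh0lem} only supplies the \emph{dimensions} of the $A(D)_d$: the Koszul differentials depend on the actual multiplication in $\Cox(S_5)$, so you must still fix coordinates for the five blown-up points and build explicit matrices, whereas the paper's ad hoc tricks (initial ideals, the injection argument) sidestep most of that bulk and dispose of $b_{4,5}$ essentially for free once Step~3 is done.
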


\begin{proof}
The proof is divided into several steps. Our proof heavily depends on the computation of the Hilbert function of $\Cox(S_5)$, and we freely use Corollary \ref{Bcox}. For simplicity, we let $b_{i,j}:=b_{i,j}(\Cox(S_5))$.

\smallskip

Step 1. Recall that $\reg(\Cox(S_5))=4$ and $\pd(\Cox(S_5))=8$ (Corollary \ref{regpd}), and $\Cox(S_5)$ is Gorenstein (Proposition \ref{popov}). By duality (Corollary \ref{greendual} and Lemma \ref{dual}), we only have to consider the left half of the Betti diagram of $\Cox(S_5)$, which is given as below.
\noindent\[
\begin{array}{ccccc}
1 & - & - & - & - \\
- & 20 & b_{2,3} & b_{3,4} & b_{4,5}   \\
- & - & b_{2,4} & b_{3,5} & b_{4,6}  \\
- & - & b_{2,5}  & b_{3,6}  & b_{4,7}     \\
- & - & - & - & -
\end{array}
\]
Note that
$$b_{1,2}=-B_2=-\sum_{Q: \text{ conic}} B_{Q}=10 \times 2 =20.$$

\smallskip

Step 2. Now we determine $b_{2,3}$.
Every degree 3 nef divisor is a twisted cubic $C$, and we have
$$
b_{2,3}=B_3=\sum_{C:\text{ twisted cubic}} B_{C} = 16 \times 3=48.
$$

\smallskip

Step 3. Now we determine $b_{3,4}$ and $b_{2,4}$. This is the most difficult part.
Recall that $B_4=-b_{3,4}+b_{2,4}=7$.
There are three degree 4 nef divisors up to Weyl group action
$$
C+E \text{ with } C.E=1, ~~-K_{S_5},~~2Q
$$
where $Q$ is a conic, $C$ is a twisted cubic, and $E$ is a $(-1)$-curve. We also have $B_{C+E}=0, B_{-K_{S_5}}=3, B_{2Q}=1$.
Since $2Q-C$ is not effective so that $b_{3,2Q}=0$. Thus $b_{2,2Q}=1$.

Now we claim that $b_{2,C+E}=b_{3,C+E}=0$.
We can write $R_5=k[x_1, \ldots, x_5, y_{12}, \ldots, y_{45}, z]$ so that $\Cox(S_5)=R_5/I_5$ where $x_i$ corresponds to $E_i$, $y_{jk}$ corresponds to $L-E_j-E_k$, and $z$ corresponds to $2L-E_1-E_2-E_3-E_4-E_5$. Fix a monomial order on $R_5$ by giving weights as follows.
\noindent\[
\begin{array}{cccccccccc}
x_4 & y_{13} & x_2 & y_{23} & y_{15} & z & x_1 &y_{25} & y_{45} & \text{others}\\
13 & 11 & 10 & 9 & 8 & 8 & 7 & 7 &  1 & 6
\end{array}
\]
Using {\tt Macaulay2}, we can check that $b_{2,2L-E_1-E_2}(\text{in}(I_5))=0$, and hence, $b_{2,2L-E_1-E_2}=0$ by the upper semicontinuity (\cite[Theorem 8.29]{MS}).
Since any nef divisor of the form $C+E$ is in the same Weyl group orbit $W_5 \cdot (2L-E_1-E_2)$, it follows from Lemma \ref{weyl} that $b_{2,C+E}=0$. Since $B_{C+E}=0$, it follows that $b_{3,C+E}=0$. Here we remark that $b_{2,2L-E_1-E_3}(\text{in}(I_5)) \neq 0$.

Now we claim that $b_{2, -K_{S_5}}=0, b_{3, -K_{S_5}}=3$.
To show the claim, we use Gr\"{o}bner bases. For details, we refer to \cite{E2}.
Note that the 20 minimal quadratic generators of $I_5$ form a Gr\"{o}bner basis with respect to the previous monomial order.
By considering Buchberger algorithm and Schreyer theorem, if $b_{2,-K_{S_5}} \neq 0$, then the corresponding syzygy is of the form
$$
f g - g f = 0
$$
where $f \in I_{5,Q}$ for some conic $Q$ and $g \in I_{5,-K_{S_5}-Q}$. 
For any $(-1)$-curve $E$ such that $Q+E$ is a twisted cubic, there is a syzygy from $b_{2,3}$ containing $ef$ where $e \in H^0(S_5, \mathcal{O}_{S_5}(E))$ is a nonzero section. Then it follows that the syzygy of the form $fg-gf$ is generated by syzygies from $b_{2,3}$. 
For instance, we consider $f=-y_{12}x_2 + y_{13}x_3 + y_{14}x_4$ and $g=y_{25}y_{34}-y_{24}y_{35} + y_{23}y_{45}$. Then there are syzygies $s_1, \ldots, s_6$ from $b_{2,3}$ containing $y_{25}f, y_{24}f, y_{23}f, x_2g, x_3g, x_4g$, respectively. Then we have
$$
-y_{12}s_4 +y_{13}s_5+y_{14}s_6 -y_{34}s_1 + y_{35}s_2 - y_{45}s_3=fg-gf  + \ell, \text{ where }
$$
$$
\ell= -y_{12}(s_4-x_2g) +y_{13}(s_5-x_3g)+y_{14}(s_6-x_4g) -y_{34}(s_1-y_{25}f) + y_{35}(s_2-y_{24}f) - y_{45}(s_3-y_{23}f)
$$
Since $-y_{12}s_4 +y_{13}s_5+y_{14}s_6 -y_{34}s_1 + y_{35}s_2 -y_{45}s_3=fg-gf=0$, it follows that $\ell=0$. We can conclude that the syzygy of the form $fg-gf$ is generated by syzygies from $b_{2,3}$. Thus $b_{2,-K_{S_5}} =0$ so that $b_{3, -K_{S_5}}=3$.
Therefore, $b_{2,4}=\sum_{Q:conic} b_{2,2Q}=10 \times 1=10$ and $b_{3,4}=b_{3, -K_{S_5}}=3$.

\smallskip

Step 4. We now claim that $b_{i,i+1}=0$ for all $i \geq 4$.
Suppose that $b_{4,5} \neq 0$. Then we must have $b_{4,-K_{S_5}+E} \neq 0$ for any $(-1)$-curve $E$.
We have seen that $b_{3,4}=b_{3, -K_{S_5}}=3$.
By considering the minimal free resolution at $[-K_{S_5}+E]$, we obtain the following exact sequence
$$
\bigoplus_{C: \text{twisted cubic}} R_5(-C)_{-K_{S_5}+E}^3 \leftarrow R_5(K_{S_5})_{-K_{S_5}+E}^3 \leftarrow R_5(K_{S_5}-E)_{-K_{S_5}+E}^{b_{4,-K_{S_5}+E} }.
$$
Since $R_5(K_{S_5})_{-K_{S_5}}^3$ injects into $\bigoplus_{C: \text{twisted cubic}} R_5(-C)_{-K_{S_5}}^3$ and both $R_5(K_{S_5})_{-K_{S_5}}^3$ and $R_5(K_{S_5})_{-K_{S_5}+E}^3$ are three dimensional vector spaces, 
 $R_5(K_{S_5})_{-K_{S_5}+E}^3$ also injects into $\bigoplus_{C: \text{twisted cubic}} R_5(-C)_{-K_{S_5}+E}^3$.
Thus $R_5(K_{S_5}-E)_{-K_{S_5}+E}^{b_{4,-K_{S_5}+E} }=0$, and hence, $b_{4,5} =0$.
This shows the claim. By the dualtiy (Corollary \ref{greendual}), we obtain $b_{2,5}=b_{3,6}=b_{4,7}=0$.

\smallskip

Step 5. The remaining part is immediate. 
Recall that $B_5=-176$ and $B_6=280$. We have shown in the previous steps that $B_5=-b_{3,5}$ and $B_6=b_{4,6}$. Thus we get 
$b_{3,5}=176$ and  $b_{4,6}=280$.
\end{proof}

The left half of the $\Pic(S_5)$-graded Betti numbers of $\Cox(S_5)$ is given as follows.
\[
\begin{array}{c|c|c|c}
 Q \times 10 & C \times 16 & -K_{S_5} \times 1 & \\
  b_{1,Q}=2   & b_{2,C}=3     &  b_{3,-K_{S_5}}=3 &   \\
\hline
  & 2Q \times 10 & C+Q (C.Q=1) \times 80 & C+C' (C.C'=3) \times 80\\
   & b_{2,2Q}=1 & b_{3,C+Q}=1 & b_{4, C+C'}=2\\
   &                  & C+Q (C.Q=2) \times 16 & C+C' (C.C'=2) \times 10\\
   &                    & b_{3,C+Q}=6 & b_{4,C+C'}=12
\end{array}
\]
By the multigraded version duality (Lemma \ref{dual}), we can easily compute the remaining $\Pic(S_5)$-graded Betti numbers of $\Cox(S_5)$.

\begin{remark}\label{b2q}
We saw that $b_{2,Q}(\Cox(S_5)) \neq 0$ for any conic $Q$ on $S_5$. 
Recall that $I_5 \subset I_{r}$ for $6 \leq r \leq 8$.
Since $Q-E$ is not nef for any $(-1)$-curve $E$, it follows from Lemma \ref{nef} that 
$b_{2,Q}(\Cox(S_r)) \neq 0$ for any conic $Q$ on $S_r$ for $5 \leq r \leq 8$.
\end{remark}

\section*{Acknowledgements}
The authors are grateful to Bernd Sturmfels and Jae-Hyouk Lee for useful discussions.
The authors also wish to thank the referee for making a number of helpful suggestions.

$ $

\end{document}